\newcounter{remark}
\newcommand{\remark}{\addtocounter{remark}{1}
                       \par \quad {\bf \arabic{remark}}.\,
                      }
\newenvironment{rks}{\begin{quote}
                     \setcounter{remark}{0}
                     \setlength{\parskip}{0.25\parskip}
                     \renewcommand{\item}{\remark}
                     {\bf Remarks}
                     \nobreak
                    }
                    {\end{quote}}
\newenvironment{rk}{\begin{quote}
                     {{\bf Remark} --}
                    }{\end{quote}}
\newenvironment{preuve}{\medbreak \noindent {\bf Proof~---}}
                       {\hfill $\square$ \medbreak}
\newcommand{\Aff}{\mathbb A}
\newcommand{\NN}{\mathbb N}
\newcommand{\ZZ}{\mathbb Z}
\newcommand{\PP}{\mathbb P}
\newcommand{\RR}{\mathbb R}
\newcommand{\CC}{\mathbb C}
\newcommand{\FF}{\mathbb F}
\newcommand{\OO}{\mathcal O}
\newtheorem{theo}{Theorem}
\newtheorem{defi}[theo]{Definition}
\newtheorem{cor}[theo]{Corollary}
\newtheorem{lem}[theo]{Lemma}
\newtheorem{prop}[theo]{Proposition}
\newtheorem*{theointro}{Theorem}
\newcommand{\Div}{\operatorname{Div}}
\newcommand{\End}{\operatorname{End}}
\newcommand{\Id}{\operatorname{Id}}
\newcommand{\Jac}{\operatorname{Jac}}
\newcommand{\Ker}{\operatorname{Ker}}
\newcommand{\NS}{\operatorname{NS}}
\newcommand{\Proj}{\operatorname{Proj}}
\newcommand{\rang}{\operatorname{rk}}
\newcommand{\Sp}{\operatorname{Sp}}
\newcommand{\tr}{\operatorname{tr}}
\newcommand{\Cun}{X}
\newcommand{\Cdeux}{\Gamma}
\newcommand{\Graphe}{{\mathcal G}}
\newcommand{\Gsing}{{\mathcal G}_{\rm sing}}
\newcommand{\ga}{\gamma}
\newcommand{\Deltaa}{\pi'_2}\newcommand{\Deltab}{\pi'_1}
\newcommand{\vertiii}[1]{
    {\left\vert\kern-0.25ex\left\vert\kern-0.25ex\left\vert #1 
    \right\vert\kern-0.25ex\right\vert\kern-0.25ex\right\vert}}
\title{\bfseries Recursive towers of curves over finite fields using graph theory}
\author{
Emmanuel Hallouin \& Marc Perret\thanks{Institut de Math\'ematiques de Toulouse, UMR 5219}
}
\begin{document}
\maketitle

\begin{abstract}
We give a new way to study recursive towers of curves over a finite field,
defined {\em \`a la Elkies} from a bottom curve $\Cun$ and a correspondence $\Cdeux$
on $\Cun$.
A close examination of singularities leads to a necessary condition for a tower to be asymptotically good. Then, spectral theory on a directed graph, Perron-Frobenius theory and considerations on the class of $\Cdeux$ in $\NS (\Cun \times \Cun)$ lead to the fact that, under some mild assumption, a recursive tower can have in some sense only a restricted asymptotic quality. Results are applied to the Bezerra-Garcia-Stichtenoth tower along the paper for illustration.
\end{abstract}

\tableofcontents

\bigskip

\noindent AMS classification : 11G20, 14G05, 14G15, 14H20, 5C38, 5C50.

\noindent Keywords : Curves over a finite field, curves with many points, Graphs, Towers of function fields, Zeta functions.

\section*{Introduction}
Since Garcia-Stichtenoth's well-known Inventiones'95 paper
appeared  \cite{GSInvent},
many examples of {\em good recursive towers} of curves over finite
fields have been described in the literature. Recall
that a tower~${\mathcal T= \left(C_n\right)_{n \geq 1}}$ of smooth projective absolutely irreducible curves $C_n$ over a finite field ${\mathbb F}_q$ is said to be
{\em good} if it has many rational points over some finite extension of the base field. To be more precise, for any $r \geq 1$, denote  by
\begin{align*}
\lambda_r({\mathcal T})
&=
\lim_{n \rightarrow +\infty}
\frac{\text{number of points of~$C_n$ defined over~$\FF_{q^r}$}}
{\text{genus of~$C_n$}}, \\
\beta_r({\mathcal T})
&=
\lim_{n \rightarrow +\infty}
\frac{\text{number of points of~$C_n$ of degree~$r$}}
{\text{genus of~$C_n$}},
\end{align*}
(it turns out that these limits exist). The well-known Drinfeld-Vl{\u{a}}du{\c{t}} bound states that $\lambda_r({\mathcal T}) \leq \sqrt{q^r}-1$ for any $r \geq 1$ and any tower ${\mathcal T}$. The tower is said to be good if at least one $\lambda_r$ is non-zero. 
Even more precisely, the closer to zero is the deficiency (see equation $(\ref{deficiency})$ below), the better is the tower. Towers reaching the Drinfeld-Vl{\u{a}}du{\c{t}} bound over some finite extension of the base field have deficiency zero, hence are optimal.  One usualy denote by
$$
A(q) = \limsup_{g\to+\infty} \frac{N_q(g)}{g},
\quad \text{where}\quad
N_q(g) = \max_{\genfrac{}{}{0pt}{1}{\text{$X/\FF_q$ sm., proj.,}}{\text{ curve of genus~$g$}}}
\sharp X(\FF_q).
$$
Some recursive towers reach the Drinfeld-Vl{\u{a}}du{\c{t}} bound for $q$ square
\cite{GSInvent,GS_Degre2Moderee,Garcia},
others give interesting non-zero lower bounds for $A(q)$ for some non-square
values of $q$, as  for $A(q^3)$ \cite{Ap3-3,Ap3-1,Ap3-2}
or more recently for~$A(q^{2n+1})$, $n\geq 1$ \cite{Ap2nplus1}.
It turns out that all these towers are {\em recursive}
over the projective line~$\PP^1$:
they are given by an explicit correspondence~$\Cdeux$
on~${\mathbb P}^1$, and the curves of the towers are
---~sometimes irreducible components of~---
the normalizations of the curves
\begin{equation*}
C_n
=
\left\{
(P_1, \dots, P_n) \in \left({\mathbb P}^1\right)^n \mid
(P_i, P_{i+1}) \in \Cdeux,  \hbox{ for each } i = 1,\ldots, n-1
\right\}.
\end{equation*}
The point is that no author give the procedure they used to obtain, or merely to guess which explicit equation will lead to a good recursive tower.
It turns out that very few papers contain theoretical considerations on recursive towers. The first small family of exceptions are a series of papers from Elkies \cite{ElkiesET,ANTSV}, whose goal is to make it plausible that any good recursive tower should come from the modular world. Another very small family of exceptions are Lenstra's \cite{Lenstra} and the subsequent Beelen's \cite{Beelen} papers, who deal with possibilities of getting recursive towers with a great number of rational points. The last exception is Bouw and Beelen's
paper \cite{BeelenBouw}, who give a link between some good recursive towers and Picard-Fuchs differential equations in characteristic $p$. Up to our knowledge, these are the only theoretical studies of recursive towers. The reader is referred to the excellent survey of Li \cite{Li} for details.

\bigskip

The aim of this paper is twofold. We want to understand better which features of the data $(\Cun, \Cdeux)$ can lead to a good recursive tower, and to study up to which point a recursive tower can be good. The key ingredients are considerations on singular models of the tower, geometry of the surface $\Cun \times \Cun$ through the class of $\Cdeux$ in the Neron-Severi group $\NS(\Cun \times \Cun)$
and the introduction of a graph attached to a tower which permits us
to use some usual results in graph theory such as spectral theory
of adjacency matrices and Perron-Frobenius theory of non-negative matrices.

\medbreak

Section~\ref{s_RT} is only a set-up one. We fix notations, introduce the
standard invariants of a tower and state some common hypothesis for most
statements. The definition
of a recursive tower requires only a pair~$(\Cun,\Cdeux)$
where~$\Cun$ is a smooth,
projective, absolutely irreducible curve defined over~$\FF_q$ and
where~$\Cdeux$ is a correspondence on~$\Cun$
which is supposed to be absolutely
irreducible and reduced. In fact one can restrict ourselves to
correspondence of special type~$(d,d)$ for~$d\geq 2$
(see \S\ref{s_recursive_tower}, for precise definition).
By contrast, we do not need to restrict ourselves
to~$\Cun$ equal to the projective line~$\PP^1$.

\medbreak

In section~\ref{s_genus_sequences}, we focus on the geometry of a recursive
tower.
Most previous authors have
chosen the function field point of view. In doing so, an important part of the geometry of the tower
---~through the singularities of the models $C_n$ of the curves~---
disappears.
We investigate more closely this geometry. This leads us to
distinguish three models of towers: the {\em singular} one,
the {\em smooth} one, and finally the {\em sharp} one which is an avatar of the singular
model.
Of course, the smooth model ---~ which corresponds to the usual tower of
function fields~--- is the most interesting one.
At any stage, the three curves are birational. The sharp one being naturally
embedded in a smooth surface, we can evaluate by adjunction formula and desingularization the geometric genus sequence of the tower.
We deduce a first necessary condition for a tower to be good:
either the curves~$C_n$ are singular for any $n$ greater than some $n_0$,
or $g({\Cun}) \geq 2$ and both
projections~$\pi_i : \Cdeux\to\Cun$ for $i=1,2$ are \'etale over~$\Cun$
(proposition~\ref{SingOuEtale}).
More precisely, in the singular case, we evaluate how
the {\em global measure of singularity} should grow when~$n\to\infty$, for
a tower to be good. A key point for the rest of the paper
is the understanding
of the singular points. We characterize them, and we study other singular points on the intersection of the curves with
some hypersurfaces for later use: corollary~\ref{bouclesing} plays an important place in the main section~\ref{s_asymptotic}. 

\medbreak

In section~\ref{s_graph}, we associate to each recursive tower a {\em geometric} infinite
directed graph and for any $r \geq 1$, an {\em arithmetic} finite directed graph. They depend only on the base curve~$\Cun$ an
on the correspondence~$\Gamma$. These graphs are closely related,
but different, to the one introduced by Beelen \cite{Beelen}. The main
difference is that the former depend on the singular
model of the tower, while the later depends only on the smooth model, that is on the associated function fields tower.
Though we share some common observations with Beelen,
  we give some new applications of the
graph, especially in the last section~\ref{s_asymptotic}. It  is a
very convenient way to represent a tower ---~in some way better than the
equations themselves~---,
in the sense that some of the most important properties can be directly seen from it.
The degree, the singular points, the totally splitting points, sometimes
the irreducibility, can directly be read off the
graph. This will be illustrated on the  {\em BGS tower} over~$\FF_{p^3}$ (see equation (\ref{BGS-equation}) below) attaining the Zink's lower bound \cite{Ap3-3,Ap3-1,Ap3-2}.

\medbreak

Section~\ref{s_asymptotic} is the main one of this paper. It is devoted to the
asymptotic behaviour of a recursive tower.
Cycles of length~$n$ in our graph are in bijection with the points of~$C_{n+1}$
having equal first and last coordinates. We have thus two ways to count
them: the combinatorial one, involving adjacency matrix of a graph,
and the geometric one, involving intersection theory on the
surface~$\Cun\times\Cun$. The comparison of these countings, together with
a standard lemma of diophantine approximations and the previous study of
the singularities lead us to prove a strong constraint on the graph:

\begin{theointro}
Let~$(\Cun,\Cdeux)$ be a correspondence as in section~\ref{s_recursive_tower}
such that the curves~$C_n$ of the associated tower are all irreducible.
Then the graph~$\Graphe_\infty(\Cun,\Cdeux)$ has at most one finite $d$-regular strongly connected
component.
\end{theointro}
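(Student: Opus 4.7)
The plan is to count cycles of length~$n$ in $\Graphe_\infty(\Cun,\Cdeux)$ in two ways --- combinatorially via adjacency matrices, and geometrically via intersection theory on $\Cun\times\Cun$ --- and derive a contradiction from the assumption that two distinct finite $d$-regular strongly connected components $\mathcal H_1,\mathcal H_2$ coexist. The bridge is the bijection recalled at the start of section~\ref{s_asymptotic}: cycles of length~$n$ correspond to $\overline{\FF_q}$-points of~$C_{n+1}$ having equal first and last coordinates, an object accessible from both sides of the comparison.

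For the combinatorial lower bound, observe that since~$\mathcal H_i$ is $d$-regular and strongly connected, its adjacency matrix~$A_i$ is an irreducible non-negative matrix with row and column sums equal to~$d$; by Perron--Frobenius its spectral radius equals~$d$, and the eigenvalues of modulus~$d$ are exactly $d$ times the $h_i$-th roots of unity, where $h_i\geq 1$ is the period. For $n$ a common multiple of~$h_1$ and~$h_2$ one therefore has $\tr(A_i^n)=h_i\,d^n+o(d^n)$; and because a $d$-regular component is closed (the out-degree~$d$ is already exhausted inside~$\mathcal H_i$), every cycle of~$\mathcal H_i$ is a cycle of~$\Graphe_\infty$. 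Along such integers~$n$,
$$\#\{\text{cycles of length }n\}\ \geq\ (h_1+h_2)\,d^n\ +\ o(d^n).$$

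For the geometric upper bound, the same number of cycles is bounded above by the intersection number $\Cdeux^n\cdot\Delta$ on~$\Cun\times\Cun$. Using that $\Cdeux$ has bidegree~$(d,d)$ and acts as multiplication by~$d$ on $H^0$ and~$H^2$, the Lefschetz trace formula yields
$$\Cdeux^n\cdot\Delta\ =\ 2d^n\ -\ \tr\!\bigl(\Cdeux^n\mid H^1(\Cun)\bigr),$$
and Castelnuovo--Severi / Hodge index forces every eigenvalue of $\Cdeux$ acting on $H^1$ to have modulus at most~$d$. Let~$s$ be the number of such eigenvalues of modulus exactly~$d$. A Dirichlet--Kronecker simultaneous approximation applied to the arguments of these~$s$ eigenvalues, compatibly with the congruence $n\equiv 0\pmod{h_1h_2}$, produces an infinite subsequence along which every such $\mu^n$ is arbitrarily close to~$d^n$, so that
$$\Cdeux^n\cdot\Delta\ =\ (2-s)\,d^n\ +\ o(d^n).$$

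Combining the two estimates along this subsequence forces $h_1+h_2+s\leq 2$, and since $h_1,h_2\geq 1$ and $s\geq 0$, the only non-contradictory configuration is $h_1=h_2=1$ and $s=0$, so that in every other situation the two components cannot coexist. The remaining boundary case --- both components aperiodic with no $H^1$-eigenvalue of~$\Cdeux$ of modulus~$d$ --- is precisely the setting of corollary~\ref{bouclesing}: the coexistence of~$\mathcal H_1$ and~$\mathcal H_2$, together with the irreducibility of every~$C_n$, forces additional singular points on the $C_n$ carrying loops in~$\Graphe_\infty$ whose contribution to the combinatorial count strictly exceeds the~$o(d^n)$ slack, breaking the asymptotic equality. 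The most delicate step will be the simultaneous Dirichlet alignment of the eigenvalue arguments with the period constraint on~$n$ (which Kronecker's theorem guarantees in the absence of resonance), together with the careful comparison, in the boundary case, of the singular-loop contribution furnished by corollary~\ref{bouclesing} against the~$o(d^n)$ error term.
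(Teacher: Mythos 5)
The high-level architecture you propose --- counting cycles of length~$n$ combinatorially via~$\tr(A^n)$ and geometrically via the intersection number~$C_{n+1}\cdot\pi_{1,n+1}^*(\Delta)=2d^n-\tr(\sigma^n)$, then comparing via Diophantine approximation --- is indeed the paper's (Propositions~\ref{nombrecycles}, \ref{primitive} and Theorem~\ref{uniquecomposympa}). But the asymptotic~$o(d^n)$ framing you adopt cannot detect the one piece of information that actually rules out your boundary case~$h_1=h_2=1$, $s=0$, and this is where your proof breaks down. Corollary~\ref{bouclesing} supplies a cycle of some length~$m$ appearing with multiplicity~$\geq 2$ in~$C_{m+1}\cap\pi_{1,m+1}^*(\Delta)$, and what this buys (last item of Proposition~\ref{nombrecycles}) is an integer $-1$ making~$c_{mn}\leq 2d^{mn}-\tr(\sigma^{mn})-1$ for every~$n\geq 1$. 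That corollary tightens the \emph{geometric upper bound} by an additive constant; it does not, as you write, ``force additional singular points\ldots whose contribution to the combinatorial count strictly exceeds the~$o(d^n)$ slack.'' The combinatorial count~$c_n$ is not inflated by singular loops, and a~$-1$ on the other side is entirely swallowed by~$o(d^n)$: in your boundary case~$\tr(A_1^n)+\tr(A_2^n)=2d^n+o(d^n)$ and~$2d^n-\tr(\sigma^n)-1=2d^n+o(d^n)$ are asymptotically indistinguishable, and no contradiction can be extracted from the comparison alone.

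The repair --- and the paper's actual route --- is to keep the inequality exact rather than asymptotic: write~$\tr(A_1^{mn})+\tr(A_2^{mn})=2d^{mn}+\sum_\lambda\lambda^{mn}$ (summing over eigenvalues other than the two simple copies of~$d$), so that the comparison gives $\sum_\lambda\lambda^{mn}+\sum_\mu\mu^{mn}\leq -1$ for every~$n\geq 1$, and then invoke Lemma~\ref{dioapprox} \emph{pointwise} to obtain one specific~$N$ with~$\Re(\lambda^{mN})>0$ and~$\Re(\mu^{mN})>0$ for every~$\lambda,\mu$. Since both spectra are stable under complex conjugation, the left-hand side at~$n=N$ is a sum of strictly positive real parts, contradicting~$\leq -1$. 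This pointwise use of Lemma~\ref{dioapprox} also dispenses with any modulus bound on the eigenvalues~$\mu$ of~$\sigma$, whereas your argument silently relies on~$|\mu|\leq d$ --- a Castelnuovo-type inequality the paper never establishes nor needs. Finally, the paper keeps the two tasks separate: Proposition~\ref{primitive} first forces~$h_i=1$ via this exact-plus-Diophantine mechanism (itself using Corollary~\ref{bouclesing} to eliminate~$h_i=2$), and Theorem~\ref{uniquecomposympa} then bounds the number of components by the same mechanism; your attempt to fold both into a single asymptotic count is precisely what leaves the residual case genuinely open.
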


In order to deduce from this graph theoretical result some properties of recursive towers, we use Perron Frobenius theorem for non-negative
matrices as a last tool. This leads us to an accurate form of the connection between the spectral radius
of finite subgraphs of the geometric graph, connected components of these
subgraphs and the number of points on the singular model
(proposition~\ref{cle}).
Finally, all together, we prove our second necessary condition for a recursive
tower to be good (proposition~\ref{d-reg_ou_plein_de_pts_dans_desingularisation}). Especially we prove our main result about the~$(\beta_r)_{r\geq 1}$
sequence of a recursive tower:

\begin{theointro}
Let~$(\Cun,\Cdeux)$ be a correspondence as in section~\ref{s_recursive_tower}. Suppose that the curves~$C_n$ of the associated tower are all irreducible
and that the geometric genus sequence~$(g_n)_{n\geq 1}$ goes to~$+\infty$.  Suppose also that the singular points of $C_n$ give rise to a number of geometric points in $\widetilde{C}_n$ negligible compared to~$d^n$ for large $n$.
Then, there exists at most one integer $r \geq 1$ such that~$\beta_r \neq 0$.
\end{theointro}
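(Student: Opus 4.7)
The plan is to combine the first theorem stated above (at most one finite $d$-regular strongly connected component of $\Graphe_\infty$) with Perron--Frobenius theory and a counting argument. The starting observation is that $\beta_r\neq 0$ forces $\rho(A_r)=d$, where $A_r$ denotes the adjacency matrix of the arithmetic graph $\Graphe_r(\Cun,\Cdeux)$. Indeed, a closed point of exact degree $r$ on $\widetilde{C}_n$ contributes $r$ geometric points over $\FF_{q^r}$, so if $\mu_r$ denotes the number of degree-$r$ closed points then $r\,\mu_r(\widetilde{C}_n)\le \#\widetilde{C}_n(\FF_{q^r})$. The singularity hypothesis ensures that this right-hand side differs from $\#C_n(\FF_{q^r})$ by at most $o(d^n)$, and $\#C_n(\FF_{q^r})$ is exactly the number of walks of length $n-1$ in $\Graphe_r$, namely $\mathbf{1}^{\top}A_r^{n-1}\mathbf{1}$. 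Since section~\ref{s_genus_sequences} combined with the singularity hypothesis yields $g_n=\Theta(d^n)$, the assumption $\beta_r\neq 0$ forces $\mathbf{1}^{\top}A_r^{n-1}\mathbf{1}\gtrsim d^n$, hence $\rho(A_r)\ge d$; the reverse bound is automatic because $\Cdeux$ is of type $(d,d)$, so every row and column sum of $A_r$ is at most $d$.

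Next I invoke Perron--Frobenius for non-negative matrices. On a strongly connected component $C$ of $\Graphe_r$ at which $\rho(A_r)=d$ is attained, the irreducible restricted matrix must have all row sums equal to the maximal possible value $d$. Because $d$ is already the maximal out-degree in $\Graphe_\infty$, the component $C$ admits no outgoing edges; the symmetric argument applied to $A_r^{\top}$ yields no incoming edges either, so $C$ is an isolated $d$-regular strongly connected subgraph of both $\Graphe_r$ and $\Graphe_\infty$. By the first theorem, such a component of $\Graphe_\infty$ is unique; call it $C^*$, and let $\FF_{q^{s^*}}$ be the smallest field over which every vertex of $C^*$ is defined, equivalently $s^*$ is the order of the Frobenius permutation of the finite vertex set $V(C^*)$.

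Finally I pin $r$ down to $s^*$. Since $C^*\subset \Graphe_r$ as soon as $\rho(A_r)=d$, we have $\FF_{q^{s^*}}\subset \FF_{q^r}$, hence $s^*\mid r$. Conversely, every walk supported inside $C^*$ has all its vertices in $\Cun(\FF_{q^{s^*}})$, so its Frobenius orbit has length dividing $s^*$; walks in the remaining strongly connected components of $\Graphe_r$ are counted by matrices of spectral radius strictly smaller than $d$, hence contribute only $o(d^n)$, which together with the $o(d^n)$ singularity correction is negligible against the $r\,\mu_r(\widetilde{C}_n)\gtrsim d^n$ required by $\beta_r\neq 0$. The bulk of the degree-$r$ orbits must therefore come from walks in $C^*$, which forces $r\mid s^*$ and hence $r=s^*$. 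As $s^*$ depends only on $(\Cun,\Cdeux)$, at most one value of $r$ satisfies $\beta_r\neq 0$. The most delicate point is the quantitative counting in the first step: the $o(d^n)$ singular correction must remain strictly smaller than the $\gtrsim d^n$ main term, which requires a sharp comparison between closed points of $C_n$, of $\widetilde{C}_n$ and of walks in $\Graphe_r$. The machinery of section~\ref{s_genus_sequences} together with proposition~\ref{cle} should supply exactly this control; once it is in hand, the graph-theoretic Steps~2 and~3 are rather formal.
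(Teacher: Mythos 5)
Your proposal is correct and rests on the same structural pillars as the paper's proof --- Theorem~\ref{uniquecomposympa}, Proposition~\ref{cle}, and the desingularization hypothesis~(H) --- but the way you close the argument is genuinely different.

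The paper's route, after establishing via formula~(\ref{somme}), hypothesis~(H) and Proposition~\ref{cle} that $\sharp\widetilde{C}_n(\FF_{q^s}) = \sharp\Sigma\cdot d^n + o(d^n)$ when $\Sigma\subset\Cun(\FF_{q^s})$ and $o(d^n)$ otherwise, works entirely at the level of the asymptotic invariants $\lambda_s$: it introduces $r$ as the minimal index with $\Sigma\subset\Cun(\FF_{q^r})$, shows $\lambda_s = \sharp\Sigma\cdot\ell$ if $r\mid s$ and $0$ otherwise (treating separately the degenerate case $\ell=\lim d^n/g_n=0$), and then propagates the vanishing of the $\beta$'s through the M\"obius-type relation $\lambda_s=\sum_{e\mid s}e\beta_e$ in three steps ($s$ dividing $r$, $s$ a proper multiple of $r$, $s$ incomparable with $r$). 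Your version short-circuits this: you go back to closed points and Frobenius orbits, showing that any $r$ with $\beta_r\neq0$ forces $\rho(A_r)=d$ hence $s^*\mid r$; and conversely that the $\Theta(d^n)$ degree-$r$ closed points of $\widetilde{C}_n$ must overwhelmingly sit over smooth points of $C_n$ whose underlying walks lie inside $C^*$, so their degree on $C_n$ (equal to $r$ in the smooth case) divides $s^*$. The upshot $r=s^*$ is identical, but you identify the unique candidate $r$ geometrically rather than by inclusion-exclusion on the $\lambda_s$. This is a nice alternative, arguably more conceptual, at the cost of having to track carefully the interplay between closed points of $C_n$, closed points of $\widetilde{C}_n$ (the degrees only agree away from singularities --- exactly where (H) intervenes), and walks in $\Graphe_r$. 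Two small points you should make explicit: (a) Frobenius permutes the finite $d$-regular strongly connected components, so by the uniqueness of Theorem~\ref{uniquecomposympa} it stabilizes $C^*$; this is what makes $s^*$ well-defined; (b) the case where $\Graphe_\infty$ has no such component needs to be dispatched separately (then no $\rho(A_r)$ reaches $d$, so no $\beta_r$ can be non-zero), exactly as the paper does at the start of its case analysis.
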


Note that the hypotheses of this theorem are satisfied for a large part of the
known recursive good towers. This is the case of the BGS tower
(see loc. cit.). Combining with the closed formula for the geometric genus
of this tower, we are able to compute exactly ---~for the first time up to our knowledge~--- two invariants,
its defect~$\delta$ and its zeta function both defined
by Tsfasmann and Vl{\u{a}}du{\c{t}} \cite{TV02}.
Finally, several authors have recently studied some towers related to recursive one, having several non-vanishing $\beta_r$'s (e.g. Hesse, Stichtenoth and Tutdere \cite{HessStichTutdere}, or Ballet and Rolland \cite{BalletRolland}). It turns out that these towers are not recursive in the sense of this paper, but are pull back of a recursive one
under some finite surjective morphism $\pi : Y \rightarrow X$. We prove in the last section corollary \ref{basechange} that in this case, the number of non-zero $\beta_r$ is at most equal to the degree of $\pi$.

\section{Models of recursive towers}\label{s_RT}

\subsection{Invariants of towers of curves over a finite field}\label{s_invariants}

An irreducible {\em tower of curves}~${\mathcal T}$ over a finite
field~$\FF_q$ is a sequence of
absolutely irreducible curves $\left(C_n \right)_{n \geq 1}$
defined over~$\FF_q$ together with a
family of finite dominant morphisms $C_{n+1}~\rightarrow~C_{n}$.
For each~$n\geq 1$, let~$\widetilde{C}_n$ denote the normalization
of~$C_n$. Then~$\widetilde{{\mathcal T}} = (\widetilde{C}_n)_n$ is also
a tower of curves.

Our purpose is to study the following invariants of these towers.

\medbreak

\noindent {\bfseries At finite levels}, for~$n \geq 1$  and~$r \geq 1$:

\begin{itemize}
\item the {\em arithmetic genus}~$\ga_n = \ga(C_n)$ of~$C_n$ 
\item the common {\em geometric genus}~$g_n= g(C_n)$ of~$C_n$
and~$\widetilde{C_n}$;
\item the number~$N_r(\widetilde{C}_n) = \#\widetilde{C}_n(\FF_{q^r})$
of $\FF_{q^r}$-rational points of~$\widetilde{C}_n$;
\item the number~$B_r(\widetilde{C}_n)$ of points of~$\widetilde{C}_n$
of degree~$r$.
\end{itemize}

For any $n \geq 1$, we have~$g_n \leq \ga_n$,
and
\begin{equation}\label{N_r_B_r}
N_r(\widetilde{C}_n) = \sum_{d \mid r} r B_r(\widetilde{C}_n).
\end{equation}

\noindent {\bfseries Ultimately}, as usual, we introduce the two asymptotic invariants provided that the genus sequence satisfies $\lim_{n\to+\infty}g_n = +\infty$:
$$
\lambda_r({\mathcal T})
=
\lim_{n\to +\infty} \frac{N_r(\widetilde{C}_n)}{g_n} 
\quad \quad \text{and}\quad \quad
\beta_r({\mathcal T})
=
\lim_{n\to +\infty} \frac{B_r(\widetilde{C}_n)}{g_n}.
$$
Following Tsfasmann and Vl{\u{a}}du{\c{t}} \cite{TV02}, a tower is said to be {\em asymptotically exact} if these limits~$\lambda_r({\mathcal T})$ and $\beta_r({\mathcal T})$ do exist
for any $r \geq 1$, provided that the genus tends to infinity. Garcia and Stichtenoth have observed that a recursive tower is always asymptotically exact:

\begin{lem}\label{exacte}
Let ${\mathcal T} = (C_n)_{n \geq 1}$ be an irreducible tower of projective
smooth absolutely irreducible curves and let~$(d_n)_n$
denote the degrees sequence of the tower, i.e. $d_n = \deg(C_n\to C_1)$. Suppose
that~$\lim_{n\to+\infty} g(C_n) = +\infty$. 
Then, for any~$r\geq 1$,
the sequences~$\left(\frac{d_n}{g(C_n)}\right)_{n\geq 1}$
and~$\left(\frac{N_r(C_n)}{d_n}\right)_{n\geq 1}$
are convergent. The limits~$\lambda_r({\mathcal T})$
and~$\beta_r({\mathcal T})$ exist
and~$\lambda_r({\mathcal T})$ is non-zero if and only if
both sequences~$\left(\frac{g(C_n)}{d_n}\right)_{n\geq 1}$
and~$\left(\frac{N_r(C_n)}{d_n}\right)_{n\geq 1}$ admit a non-zero limit.
\end{lem}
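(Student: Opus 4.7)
The plan is to show that both $d_n/g(C_n)$ and $N_r(C_n)/d_n$ are monotone and bounded, hence convergent; existence of $\lambda_r({\mathcal T})$ and $\beta_r({\mathcal T})$, together with the final equivalence, then follow by standard limit arithmetic and M\"obius inversion.

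First, applying Riemann--Hurwitz to each finite dominant morphism of smooth projective curves $C_{n+1}\to C_n$, non-negativity of the ramification divisor yields $2g(C_{n+1})-2 \geq [C_{n+1}:C_n]\,(2g(C_n)-2)$. Dividing by $d_{n+1}=[C_{n+1}:C_n]\cdot d_n$ shows that $(g(C_n)-1)/d_n$ is non-decreasing; since $g(C_n)\to+\infty$ forces $d_n\to+\infty$ (otherwise the morphisms would ultimately be isomorphisms), the sequences $(g(C_n)-1)/d_n$ and $g(C_n)/d_n$ differ by $1/d_n\to 0$ and share the same limit $\ell\in(0,+\infty]$; equivalently $d_n/g(C_n)$ converges in $[0,+\infty)$. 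On the other hand, for fixed $r\geq 1$, since $C_{n+1}\to C_n$ is defined over $\FF_q$, every $\FF_{q^r}$-point of $C_{n+1}$ projects to an $\FF_{q^r}$-point of $C_n$ and each fiber contains at most $[C_{n+1}:C_n]$ geometric points, whence $N_r(C_{n+1})\leq [C_{n+1}:C_n]\,N_r(C_n)$; dividing by $d_{n+1}$, the non-negative sequence $(N_r(C_n)/d_n)_n$ is non-increasing, hence convergent.

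Combining these two convergences,
\[
\lambda_r({\mathcal T})=\lim_n \frac{N_r(C_n)}{g(C_n)}=\lim_n\frac{N_r(C_n)}{d_n}\cdot\lim_n\frac{d_n}{g(C_n)}
\]
exists as a product of limits, and is non-zero precisely when both factors admit non-zero (finite) limits, which is the last assertion. For $\beta_r$, M\"obius inversion of $(\ref{N_r_B_r})$ gives $B_r(C_n)=\frac{1}{r}\sum_{d\mid r}\mu(r/d)N_d(C_n)$, so $B_r(C_n)/g(C_n)$ is a $\QQ$-linear combination of the $N_d(C_n)/g(C_n)$ for $d\mid r$ and therefore converges as well. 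The argument invokes no structure beyond the finite morphisms of smooth projective curves $C_{n+1}\to C_n$ being defined over $\FF_q$; there is no serious obstacle, the only point to notice being that Riemann--Hurwitz is used purely as an inequality (the ramification contribution is discarded), and that the fiber bound $N_r(C_{n+1})\leq [C_{n+1}:C_n]\,N_r(C_n)$ is valid for any finite morphism defined over $\FF_q$.
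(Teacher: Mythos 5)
Your proof is correct and follows essentially the same route as the paper: use Riemann--Hurwitz (as an inequality) to show $(g(C_n)-1)/d_n$ is non-decreasing, use the fiber bound to show $N_r(C_n)/d_n$ is non-increasing, and then combine the two limits. The only cosmetic difference is that you pass from the $\lambda_r$'s to the $\beta_r$'s via M\"obius inversion, whereas the paper does it by induction on $r$ through the relation $(\ref{N_r_B_r})$; these are the same computation packaged differently.
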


\begin{preuve}
We compare the sequences~$(N_r(C_n))_n$ and~$(g(C_n)-1)_n$
with~$(d_n)_n$.
One has:
$$
N_r(C_n) \leq \deg(C_n\to C_{n-1})N_r(C_{n-1})
\qquad \text{and} \qquad
g(C_n)-1 \geq \deg(C_n\to C_{n-1})\left(g(C_{n-1}) - 1\right),
$$
the second inequality being a consequence of Riemann-Hurwitz formula.
Thus the sequence~$\left(\frac{N_r(C_n)}{d_n}\right)_n$ decreases,
while the sequence~$\left(\frac{g(C_n)-1}{d_n}\right)_n$ increases.
If moreover~$\lim_{n\to\infty} g(C_n) = +\infty$, we deduce
that sequences~$\left(\frac{N_r(C_n)}{d_n}\right)_n$,
$\left(\frac{d_n}{g(C_n)-1}\right)_n$
and~$\left(\frac{d_n}{g(C_n)}\right)_n$
are convergent.
This proves that the limit~$\lambda_r((C_n)_{n\geq 1})$ do exist.
By induction on $r$ thanks to the relation~(\ref{N_r_B_r}),
we also deduce that the limit~$\beta_r((C_n)_{n \geq 1})$ exist for
any~$r \geq 1$. The lemma follows.
\end{preuve}

From equation~(\ref{N_r_B_r}), we have for any $r \geq 1$
\begin{equation}\label{lambda_r_beta_r}
\lambda_r({\mathcal T}) = \sum_{d \mid r} d \beta_d({\mathcal T})
\end{equation}
and the important inequality
$$
A(q^r) \geq \lambda_r({\mathcal T}).
$$
A recursive tower is interesting only if at least
one~$\lambda_r$ exists and is non-zero, in which case the tower is said to
be {\em good}. One can be more precise. It has been proved by Tsfasman \cite{T92} that
$$\sum_{r=1}^{\infty} \frac{r\beta_r}{\sqrt{q^r}-1} \leq 1,$$
generalizing the well known Drinfeld-Vl{\u{a}}du{\c{t}} bound.
Tsfasmann and Vl{\u{a}}du{\c{t}} \cite{TV02} have also defined the {\em deficiency} of an asymptotically exact tower by
\begin{equation} \label{deficiency}
\delta({\mathcal T}) = 1-\sum_{r=1}^{\infty} \frac{r\beta_r}{\sqrt{q^r}-1} \in [0, 1].
\end{equation}
To sum up, a tower is good if $\delta <1$. It is said {\em optimal} if $\delta=0$.

\subsection{Recursive towers of curves over a finite field}\label{s_recursive_tower}

This article deals with specific towers of curves, the so-called {\em recursive} ones, defined by Elkies \cite{ElkiesET} as follows.

Let~$\Cun$ be a smooth projective absolutely irreducible algebraic curve of genus $g({\Cun}) \geq 0$ defined over the finite
field~$\FF_q$. Let~$\Cdeux$ be a correspondence {\em of type~$(d_1,d_2)$}
on~$ \Cun$; this means that
$d_1 = \Cdeux \cdot H$ and~$d_2 = \Cdeux \cdot V$,
where~$H = \Cun \times {\rm pt}$ and~$V = {\rm pt} \times \Cun$ denote the
horizontal and vertical divisors on~$\Cun\times\Cun$
(cf.~\cite[Chap~V,\S1,Ex 1.9, page~368]{Hartshorne}).

Consider the two
projection morphisms~$\pi_i : \Cun\times\Cun\to\Cun$, defined
by~$\pi_i(P_1,P_2) = P_i$ for $i=1,2$.
We have the following diagram:
$$
\begin{tikzpicture}[>=latex,baseline=(M.center)]
\matrix (M) [matrix of math nodes,row sep=0.75cm,column sep=0.75cm]
{
            &  |(XxX)| \Cun \times \Cun & \\
|(X1)| \Cun &                           & |(X2)| \Cun \\
};
\draw[->] (XxX) -- (X1) node[midway,anchor = south east] {$\pi_1$} ;
\draw[->] (XxX) -- (X2) node[midway,anchor = south west] {$\pi_2$} ;
\end{tikzpicture}
\qquad
\genfrac{}{}{0pt}{0}
{\text{which induces two}}
{\text{finite morphisms:}}
\qquad
\begin{tikzpicture}[>=latex,baseline=(M.center)]
\matrix (M) [matrix of math nodes,row sep=0.75cm,column sep=0.75cm]
{
            &  |(Gamma)| \Cdeux & \\
|(X1)| \Cun &                           & |(X2)| \Cun \\
};
\draw[->] (Gamma) -- (X1) node[midway,anchor = south east] {$\pi_1$} ;
\draw[->] (Gamma) -- (X2) node[midway,anchor = south west] {$\pi_2$} ;
\end{tikzpicture}
\quad
\begin{array}{c}
\deg(\pi_1) = d_2\\
\deg(\pi_2) = d_1
\end{array}
$$
Note that the curve~$\Cdeux$ is not supposed to be smooth. 
The irreducibility assumption of~$\Cdeux$ is natural since we will deal with irreducible towers.


\medskip

From these data, one can define three towers of curves.

\medbreak

$\bullet$ The {\em singular recursive tower} ${\mathcal T}(\Cun, \Cdeux)$ is
the sequence of curves~$\left(C_n\right)_{n \geq 1}$ defined by:
\begin{equation}\label{eq_def_C_n}
C_n =
\left\{
(P_1, P_2, \cdots, P_n) \in \Cun^n \mid
\text{$(P_i, P_{i+1}) \in \Cdeux$ for each~$i = 1, 2, \ldots, n-1$}
\right\}
\end{equation}
By definition, each curve~$C_n$ is embedded in the $n$-fold product~$\Cun^n$.
For~$1\leq i\leq n$, let~$\pi_i^n : C_n \to \Cun$ (or simply~$\pi_i$ if the
domain is clear from the context) be the $i$-th projection defined
by~$(P_1,\ldots,P_n) \longmapsto P_i$.

Thus~$C_1 = \Cun$ is supposed to be smooth, while~$C_2 = \Cdeux$
is not! So except for~$C_1$, the curves~$C_n$ for~$n \geq 2$ need
not to be smooth (even if $\Cdeux$ is, see
propositions~\ref{singular} and~\ref{lisse&totdec}).

\medbreak

$\bullet$ The {\em smooth recursive tower}  $\widetilde{\mathcal T}(\Cun, \Cdeux)$ is the sequence of smooth curves~$(\widetilde{C}_n)_{n \geq 1}$ where, for each~$n \geq 1$, we
denote by~$\widetilde{C}_n$ the normalization of the curve~$C_n$
and by~$\nu_n : \widetilde{C}_n\to C_n$ the desingularization morphism. 

\medbreak

$\bullet$ The {\em sharp recursive tower} ${\mathcal T}^{\sharp}(\Cun, \Cdeux)$ is the sequence of curves~$(C_n^{\sharp})_{n \geq 1}$, where~$C_n^\sharp$ is the pullback of the embedding~$\Cdeux \hookrightarrow \Cun\times\Cun$ along~$\pi_{n-1}^{n-1}\circ\nu_n \times \Id : \widetilde{C}_{n-1}\times\Cun \to \Cun\times\Cun$. It is also the pullback
of the embedding~$C_n \hookrightarrow C_{n-1}\times X$ along~$\nu_{n-1} \times \Id : \widetilde{C}_{n-1}\times\Cun \to C_{n-1}\times\Cun$,
 so that we have the cartesian diagram
$$
\begin{tikzpicture}[>=latex]
\matrix[matrix of math nodes,row sep=0.75cm,column sep=0.75cm]
{
|(Cdieze)| C_n^\sharp    & |(CtildeX)| \widetilde{C}_{n-1}\times\Cun \\
|(C)|      C_n         & |(CX)| C_{n-1}\times\Cun \\
|(Gamma)| \Gamma        & |(XX)| \Cun\times\Cun\\
};
\draw[->] (Cdieze) -- (C) ; \draw[->] (C) -- (Gamma) ;
\draw[->] (CtildeX) -- (CX) ; \draw[->] (CX) -- (XX) ;
\draw[right hook->] (Cdieze) -- (CtildeX) ;
\draw[right hook->] (C) -- (CX) ;
\draw[right hook->] (Gamma) -- (XX) ;
\end{tikzpicture}
$$

\medbreak

All the morphisms between the different curves are summarized in
figure~\ref{figure_morphismes}. All vertical maps and the two curved ones are finite morphisms, while the
straight diagonal maps are birational
isomorphisms if the $C_n$ are irreducible. Indeed, it is easily checked using the fiber product interpretation of
 $C_n^{\sharp}$ 
 that the map $\widetilde{C}_n \rightarrow C_n^{\sharp}$ is surjective, so that $C_n^{\sharp}$
 is also irreducible. Since moreover the composite map $\widetilde{C}_n \rightarrow C_n^{\sharp} \rightarrow C_n$ is a birational isomorphism and all curves are irreducible, both maps are birational isomorphisms. Therefore
$$
\ga(C_n) \geq \ga(C_n^\sharp) \geq g(\widetilde{C}_n),
$$
which means that the curve~$C_n^\sharp$ is singular, but less than $C_{n}$ itself.

\begin{figure}
$$
\begin{tikzpicture}[>=latex]
\matrix[matrix of math nodes,row sep=0.75cm,column sep=0.75cm]
{
                &                         & |(Cntilde)| \widetilde{C}_n\\
                & |(Cndieze)| C_n^\sharp & |(v3)| \vdots \\
|(Cn)| C_n    & |(v2)| \vdots         & |(C2tilde)| \widetilde{C}_2 \\
|(v1)| \vdots & |(C2dieze)| C_2^\sharp & \\
|(C2)| C_2    & |(vide1)|              &\\
|(C1)| C_1    &                         & |(vide2)| \\
};
\draw[->] (Cn) -- (v1) ;
\draw[->] (v1) -- (C2) ;
\draw[->] (C2) -- (C1) ;
\draw[->] (Cndieze) -- (v2) ;
\draw[->] (v2) -- (C2dieze) ;
\draw[->] (Cntilde) -- (v3) ;
\draw[->] (v3) -- (C2tilde) ;
\draw[->] (Cntilde) -- (Cndieze) ;
\draw[->] (Cndieze) -- (Cn) ;
\draw[->] (C2tilde) -- (C2dieze) ;
\draw[->] (C2dieze) -- (C2) ;
\draw[->] (C2dieze) to [bend left] (C1) ;
\draw[->] (C2tilde) to [bend left] (C1.east) ;
\node[above,yshift=1em] at (Cn) {$\vdots$};
\node[above,yshift=1em] at (Cndieze) {$\vdots$};
\node[above,yshift=1em] at (Cntilde) {$\vdots$};
\end{tikzpicture}
$$
\caption{The three towers}\label{figure_morphismes}
\end{figure}

\medskip

In most of the examples studied in the literature, the base curve~$\Cun$ is the
projective line~$\PP^1$. As for the correspondence, it has most often separated
variables, that is:
$$
\Cdeux_{f,g}
=
\left\{
(P, Q) \in \PP^1 \times \PP^1 \mid f(P)=g(Q)
\right\}
$$
where~$f$ and~$g$ are two rational functions on~$\PP^1$. The curves~$C_n$
are then defined by:
$$
C_n
=
\{
(P_1,\ldots,P_n) \in (\PP^1)^n
\mid f(P_i) = g(P_{i+1}),\, i = 1,\ldots, n-1
\}.
$$

\medbreak

Even if, the {\em smooth recursive tower} is the most interesting
---~this is the tower studied by previous authors usually using the function
field language~---, we think that the consideration
of the {\em singular recursive tower} can be fruitful
due to its geometric definition. As for the {\em sharp recursive tower},
it turns to be useful in order to study the genus of the {\em smooth
recursive tower} using adjunction formula on smooth surfaces and normalization
process.

Since our final goal is to study the asymptotic behaviour of smooth absolutely irreducible curves, we will assume in most statements that the singular curves $C_n$ are irreducible. Up to our knowledge, the only important reducible recursive tower containing a good irreducible sub-tower is in \cite{Ap3-3}. However, this good irreducible sub-tower turned later to be itself recursive in \cite{Ap3-2}.
A simple criterion asserting this irreducibility, satisfied by most known good recursive towers, is given in the remark in section \ref{s_computation_genus}.

\medskip

For our purpose, under the assumption of irreducibility, we could restrict our
attention to the case of correspondence of type~$(d_1,d_2)$ with~$d_1 = d_2$
as this well known lemma shows.

\begin{lem} \label{d1d2}
Let~$(\Cun,\Cdeux)$ be a correspondence as in section~\ref{s_recursive_tower},
except that the type is assumed to be~$(d_1,d_2)$.
Let~${\mathcal T} = (C_n)_{n\geq 1}$ be the associated tower. Suppose that the
curves~$C_n$ are
irreducible for any~$n\geq 1$, and that the geometric genus
sequence~$(g_n)_{n\geq1}$ goes to infinity. If~$d_1\not= d_2$,
then $\lambda_r({\mathcal T}) = 0$ and $\beta_r({\mathcal T}) = 0$ for
any~$r\geq 1$.
\end{lem}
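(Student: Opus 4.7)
The plan is to exploit the asymmetry between the two natural projections $\pi_1^n,\pi_n^n:\widetilde{C}_n\to \Cun$, whose degrees $d_2^{n-1}$ and $d_1^{n-1}$ grow at different rates when $d_1\neq d_2$. I would first observe that besides the standard tower map $C_{n+1}\to C_n$ forgetting the last coordinate (degree $d_2$), the map forgetting the \emph{first} coordinate is also a finite dominant morphism, of degree $d_1$; both lift through the universal property of normalization to morphisms between the smooth irreducible curves $\widetilde{C}_n$. Assuming without loss of generality that $d_1>d_2$, I would apply Lemma~\ref{exacte} to this second tower structure, for which $d_n=d_1^{n-1}$, and conclude that the sequence $(d_1^{n-1}/g_n)_n$ converges to some finite limit $L\in[0,+\infty)$.

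Next, the other projection $\pi_1^n\circ\nu_n:\widetilde{C}_n\to \Cun$ has degree $d_2^{n-1}$, so each $\FF_{q^r}$-rational point of $\widetilde{C}_n$ lies above an $\FF_{q^r}$-point of $\Cun$ with at most $d_2^{n-1}$ preimages, hence $N_r(\widetilde{C}_n)\leq d_2^{n-1}N_r(\Cun)$. Dividing by $d_1^{n-1}$ and using $d_1>d_2$ yields $N_r(\widetilde{C}_n)/d_1^{n-1}\to 0$. The factorization
$$
\frac{N_r(\widetilde{C}_n)}{g_n}=\frac{N_r(\widetilde{C}_n)}{d_1^{n-1}}\cdot\frac{d_1^{n-1}}{g_n}
$$
then shows that the first factor tends to $0$ while the second converges to $L<+\infty$, so $\lambda_r({\mathcal T})=0$. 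The relation~(\ref{N_r_B_r}) furnishes $rB_r(\widetilde{C}_n)\leq N_r(\widetilde{C}_n)$, whence $\beta_r({\mathcal T})=0$ for every $r\geq 1$.

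The only delicate point will be the application of Lemma~\ref{exacte} to the non-standard tower structure: since its proof only invokes the existence of finite dominant morphisms $C_{n+1}\to C_n$ together with the Riemann-Hurwitz genus inequality, both orientations of the recursion qualify, and it is precisely this freedom to run the recursion from the opposite side that lets the smaller-degree projection $\pi_1^n$ control the asymptotic point count, forcing the vanishing of $\lambda_r$ and $\beta_r$.
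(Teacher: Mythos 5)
Your proof is correct and follows essentially the same strategy as the paper's: both exploit the two coordinate projections of unequal degrees $d_1^{n-1}$ and $d_2^{n-1}$ on $\widetilde{C}_n$, bounding $N_r(\widetilde{C}_n)$ by the smaller one and controlling the genus growth by the larger one, so that the ratio $N_r/g_n$ tends to zero. The only cosmetic differences are that the paper takes $d_1<d_2$ and invokes Riemann--Hurwitz directly (assuming $g(C_1)\geq 2$ after truncating the tower), whereas you invoke Lemma~\ref{exacte} on the reverse tower structure, which is a slightly cleaner way to get the convergence of $d_1^{n-1}/g_n$ without the truncation step.
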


\begin{preuve}
Suppose for instance that~$d_1< d_2$, and let $r \geq 1$. Then one
has~$N_r(C_n) \leq N_r(C_1)d_1^{n-1}$. On the other hand, since the genus~$g(C_n)$
goes to infinity, one can also suppose
that~$g(C_1) \geq 2$ and by Hurwitz
genus formula, one has~$g(C_n) -1\geq d_2^{n-1}(g(C_1)-1)$ for any $n \geq 1$.
Therefore~$\lambda_r({\mathcal T}(\Cun,\Cdeux)) = 0$ since $\left(\frac{d_1}{d_2}\right)^n \to 0$. The assertion for the $\beta_r$'s follows by induction from
formula~(\ref{lambda_r_beta_r}).
\end{preuve}

In the whole paper we make the following assumptions:

\begin{quote}
{\bfseries Hypotheses~---~}{\em The curve~$\Cun$ is supposed to be smooth,
projective, absolutely irreducible, and defined over~$\FF_q$. The
correspondence~$\Cdeux$ on~$\Cun$ is supposed to be absolutely
irreducible, reduced, and of type~$(d,d)$ for~$d\geq 2$.}
\end{quote}

\section{Genus sequences in a recursive tower}\label{s_genus_sequences}

In order to compute the $\lambda_r$'s and~$\beta_r$'s invariants of a recursive tower, one needs to understand
the behaviour of the genus sequence. It turns out that $g_n$ and $\ga^{\sharp}_n$ are closely related thanks to adjunction formula (proposition~\ref{genre_arithmetic_geometric} thanks to lemma \ref{genre_tire_arriere}). This leads us to distinguish two kinds of recursive towers which could be good (proposition~\ref{SingOuEtale}). 
The proof of proposition~\ref{singular}, which gives a characterization of the singular points in a recursive tower, takes the largest part of this section. Then, we prove proposition~\ref{multiple} and its important corollary~\ref{bouclesing}, which is one of the tools in the proof of theorem~\ref{betar} in section~\ref{s_asymptotic}.

\subsection{Arithmetic versus geometric genus in recursive towers} 
\label{s_genre_geometrique_arithmetique}

Let~$(\Cun, \Cdeux)$ be as in section \ref{s_recursive_tower} and
consider~${\mathcal T}, {\mathcal T}^\sharp$ and $\widetilde{\mathcal T}$ the
associated towers of curves.
The sharp model turns here to be a useful tool to understand the geometric
genus sequence $(g_n)_{n\geq 1}$. We proceed in two steps: first we compare
the geometric genus~$g_n$ with the arithmetic genus~$\ga_n^\sharp$ using the adjunction
formula on the smooth surface $\widetilde{C}_{n-1}\times \Cun$, then
we prove an induction relation between~$g_n$ and~$g_{n-1}$ involving terms coming from desingularization of $C_n^{\sharp}$. 

\bigbreak

$\bullet$ The first step is classical. For any~$n\geq 2$, and
any~$P \in C_n^\sharp(\overline{\FF_q})$ be a geometric point, let~$\delta_P$ denote
the {\em measure of the singularity} at~$P$ (see Hartshorne
\cite{Hartshorne}, Chap~IV, Ex~1.8
or Liu \cite{Liu}, \S7.5),
that
is\footnote{Consistency would require sharp exponents for the
following~$\delta_P$, $\OO_P$ and~$\Delta_n$. For simplicity, we choose
to drop them.}
$$
\delta_P = \dim_{\overline{\FF_q}} \widetilde{\OO}_P / \OO_P,
$$
where~$\OO_P$ and~$\widetilde{\OO}_P$ denote the local ring
of~$C_n^\sharp$ at~$P$ and its integral closure. This measure is non-zero if
and only if the point~$P$ is singular so it makes sense to define
\begin{equation}\label{Delta_n}
\Delta_n = \sum_{P \in C_n^\sharp(\overline{\FF_p})} \delta_P
\end{equation}
as a measure of the whole singularities of $C_n^{\sharp}$.
Then the geometric and arithmetic genus of~$C_n^\sharp$ are related by
\begin{equation}\label{ga_g_Delta}
\ga_n^\sharp = g_n + \Delta_n
\end{equation}
(see loc. cit.). 

\medbreak

$\bullet$ Second, to prove the induction relation in Proposition \ref{genre_arithmetic_geometric}, \ref {ga_g},
 we need the following lemma.

\begin{lem} \label{genre_tire_arriere}
Let~$f_i : Y_i \to X_i$ be finite morphisms of smooth
absolutely irreducible projective curves of degree~$n_i$ for $i = 1,2$, and
let~$F : Y_1 \times Y_2 \to X_1 \times X_2$ be the
product morphism~$F = f_1 \times f_2$. If~$\Cdeux$ is a
correspondence
of type~$(d_1, d_2)$ between~$X_1$ and~$X_2$, then the arithmetic genus $\ga(F^*(\Cdeux))$
of the pull-back $F^*(\Cdeux)$ of~$\Cdeux$ by~$F$ is given by
$$
2\ga(F^*(\Cdeux)) - 2
=
n_1n_2 \Cdeux^2 + n_2d_2\left(2g(Y_1)-2\right) + n_1d_1\left(2g(Y_2)-2\right)
$$
where~$g(Y_i)$ denotes the genus of~$Y_i$ ($i=1,2$) and where~$\Cdeux^2$ is the
self-intersection of~$\Cdeux$ computed in the group~$\NS(X_1 \times X_2)$.
\end{lem}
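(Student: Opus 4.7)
The proof will proceed by applying the adjunction formula on the smooth surface $Y_1\times Y_2$ to the effective divisor $F^*(\Cdeux)$, and then computing the three relevant intersection numbers one by one using the product structure.

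First, since $Y_1\times Y_2$ is smooth and $F^*(\Cdeux)$ is an effective Cartier divisor on it, the adjunction formula gives
$$
2\ga(F^*(\Cdeux))-2 \;=\; \bigl(F^*(\Cdeux)\bigr)^2 \;+\; F^*(\Cdeux)\cdot K_{Y_1\times Y_2}.
$$
So it suffices to compute the self-intersection and the intersection with the canonical.

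Next I would use the product formula for the canonical class: if $q_i:Y_1\times Y_2\to Y_i$ denote the projections, then
$$
K_{Y_1\times Y_2} \;=\; q_1^*K_{Y_1}+q_2^*K_{Y_2} \;=\; \bigl(2g(Y_1)-2\bigr)\,V_Y + \bigl(2g(Y_2)-2\bigr)\,H_Y,
$$
where $H_Y=Y_1\times\{\text{pt}\}$ and $V_Y=\{\text{pt}\}\times Y_2$ are the horizontal and vertical divisors on $Y_1\times Y_2$. The problem therefore reduces to computing $F^*(\Cdeux)\cdot H_Y$ and $F^*(\Cdeux)\cdot V_Y$.

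For $F^*(\Cdeux)\cdot H_Y$: choose a generic $y_2\in Y_2$ so that the intersection is transverse; then it equals the number of $y_1\in Y_1$ with $(f_1(y_1),f_2(y_2))\in\Cdeux$. Set $x_2=f_2(y_2)$. The number of points $P\in\Cun$ with $(P,x_2)\in\Cdeux$ is $\Cdeux\cdot H=d_1$, and each such $P$ has exactly $n_1$ preimages under $f_1$ (again by genericity). Hence $F^*(\Cdeux)\cdot H_Y=n_1d_1$, and symmetrically $F^*(\Cdeux)\cdot V_Y=n_2d_2$. Alternatively, one can give these identities via the projection formula, since $H_Y$ and $V_Y$ are pullbacks of horizontal and vertical classes on $\Cun\times\Cun$ under $F$.

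For the self-intersection, the morphism $F=f_1\times f_2$ is finite of degree $n_1n_2$, and the projection formula for a finite flat morphism between smooth surfaces yields
$$
\bigl(F^*(\Cdeux)\bigr)^2 \;=\; \deg(F)\,\Cdeux^2 \;=\; n_1n_2\,\Cdeux^2.
$$
Combining the three computations and substituting into the adjunction formula gives exactly the claimed identity
$$
2\ga(F^*(\Cdeux))-2 \;=\; n_1n_2\,\Cdeux^2 + n_2d_2\bigl(2g(Y_1)-2\bigr) + n_1d_1\bigl(2g(Y_2)-2\bigr).
$$

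The only delicate point is ensuring the numerical intersection computations are correct even when $F^*(\Cdeux)$ fails to be reduced (which can happen when $F$ ramifies along $\Cdeux$); this is not really an obstacle because intersection numbers and the adjunction formula are insensitive to reducedness for effective Cartier divisors. The cleanest version of this argument simply moves the divisors $H_Y$ and $V_Y$ to a generic translate, ensuring transverse intersection with $F^*(\Cdeux)$, so that the count is straightforward.
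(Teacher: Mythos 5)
Your argument is correct and matches the paper's: both apply adjunction for an effective divisor on the smooth surface $Y_1\times Y_2$, write $K_{Y_1\times Y_2}=(2g(Y_2)-2)H_Y+(2g(Y_1)-2)V_Y$, and evaluate the three intersection numbers via the projection formula for the finite degree-$n_1n_2$ morphism $F$. One caution on your hands-on count of $F^*(\Cdeux)\cdot H_Y$: in positive characteristic $f_1$ may be inseparable, in which case a generic $P\in X_1$ has fewer than $n_1$ distinct preimages and the pointwise count undercounts (the deficit is carried by multiplicities); the projection-formula version $F^*(\Cdeux)\cdot H_Y=\Cdeux\cdot F_*(H_Y)=\Cdeux\cdot n_1h=n_1d_1$, which you also mention, is the one that is unconditionally correct, and likewise your final remark about non-reducedness of $F^*(\Cdeux)$ is well taken.
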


\begin{preuve}
By adjunction formula (see Liu~\cite[theorem~1.37, page~390]{Liu}
in the geometric case page~376 therein),
the arithmetic genus is given by
$$
2 \ga\left(F^*(\Cdeux)\right) - 2
=
F^*(\Cdeux) \cdot \left(F^*(\Cdeux) + K_{Y_1 \times Y_2}\right),
$$
where~$K_{Y_1\times Y_2}$ is the canonical class in the Neron-Severi group
$\NS(Y_1 \times Y_2)$ of the smooth
surface~$Y_1 \times Y_2$. This class $K_{Y_1 \times Y_2}$ is known to be~$(2g(Y_2)-2)H + (2g(Y_1)-2)V$ where~$H$ and~$V$ denote the horizontal and
vertical classes in~$\NS(Y_1 \times Y_2)$ (see Hartshorne \cite{Hartshorne}, Chap~II, Ex~8.3). Then
\begin{align*}
2 \ga\left(F^*(\Cdeux)\right) - 2
&=
F^*(\Cdeux) \cdot F^*(\Cdeux)
+
(2g(Y_2)-2) F^*(\Cdeux) \cdot H
+ (2g(Y_1)-2) F^*(\Cdeux) \cdot V.
\end{align*}
Denote by~$h$ and~$v$ the horizontal and
vertical classes in~$\NS(X_1 \times X_2)$. By the projection formula
(see Liu~\cite[Theorem~2.12, page~398]{Liu}), we have
\begin{align*}
F^*(\Cdeux) \cdot F^*(\Cdeux)
&=
\Cdeux \cdot F_* F^*(\Cdeux) = \Cdeux \cdot \deg(F) \Cdeux = n_1n_2\Cdeux^2,\\
F^*(\Cdeux) \cdot H
&=
\Cdeux \cdot F_*(H) = \Cdeux \cdot n_1 h = n_1 d_1, \\
F^*(\Cdeux) \cdot V
&=
\Cdeux \cdot F_*(V) = \Cdeux \cdot n_2 v = n_2 d_2
\end{align*}
since we have~$d_1 = \Gamma\cdot h$
and~$d_2 = \Gamma \cdot v$ by the very definition of the type~$(d_1,d_2)$.
\end{preuve}

\begin{prop}\label{genre_arithmetic_geometric}
Let~$(\Cun,\Cdeux)$ be a correspondence as in section~\ref{s_recursive_tower}.
Let~$(g_n)_{n\geq 1}$ and~$(\ga_n^\sharp)_{n\geq 1}$ be the geometric and
sharp-arithmetic genus sequence of the associated tower.
\begin{enumerate}
\item\label{ga_g} The sharp-arithmetic genus~$\ga_n^\sharp$ and the geometric genus~$g_{n-1}$ are
related, for $n \geq 2$, by
$$
\ga_n^\sharp - 1 = d(g_{n-1}-1) + d^{n-2}\left[(\ga^\sharp_2-1) - d(g_1 - 1)\right].
$$
\item\label{gn} For any~$n\geq 1$, the geometric genus~$g_n$ is given by
$$
g_n -1
=
\begin{cases}
(n-1)d^{n-2}\left[(\gamma_2-1)-d(g_1-1)\right] + d^{n-1}(g_1-1) - \sum_{i=2}^n d^{n-i}\Delta_i &\text{(general case)}\\
(n-1)d^{n-2}\left[(g_2-1)-d(g_1-1)\right] + d^{n-1}(g_1-1) & \text{(smooth case)}
\end{cases}
$$
where the~$\Delta_i$'s are defined in formula~$(\ref{Delta_n})$ and
where~$\ga_2$ denote the arithmetic genus of~$\Cdeux$.
\end{enumerate}
\end{prop}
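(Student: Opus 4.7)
The plan is to derive (i) by a single application of Lemma \ref{genre_tire_arriere} to the cartesian square defining $C_n^\sharp$, and then to derive (ii) by combining (i) with the classical identity (\ref{ga_g_Delta}) between arithmetic and geometric genera, and solving an elementary linear recurrence by telescoping.

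For part (i), I would apply Lemma \ref{genre_tire_arriere} with $Y_1 = \widetilde{C}_{n-1}$, $Y_2 = X_1 = X_2 = \Cun$, $f_2 = \Id_\Cun$ of degree $n_2 = 1$, and $f_1 = \pi^{n-1}_{n-1} \circ \nu_{n-1}$. The degree of $f_1$ is $n_1 = d^{n-2}$: indeed $\nu_{n-1}$ is birational, and a generic fibre of the last-coordinate projection $\pi^{n-1}_{n-1} \colon C_{n-1} \to \Cun$ has cardinality $d^{n-2}$ since, the correspondence being of type $(d,d)$, each backward step offers $d$ preimages. By the very definition of the sharp model, $C_n^\sharp = F^{*}\Gamma$, so the lemma gives
$$2\gamma_n^\sharp - 2 \;=\; d^{n-2}\,\Gamma^2 \,+\, 2d(g_{n-1} - 1) \,+\, 2d^{n-1}(g_1 - 1).$$
Specializing to $n = 2$, where $\widetilde{C}_1 = \Cun$ forces $C_2^\sharp = \Gamma$ and $\gamma_2^\sharp = \gamma_2$, yields $\tfrac{1}{2}\Gamma^2 = (\gamma_2^\sharp - 1) - 2d(g_1 - 1)$; substituting this back into the general formula produces the recurrence claimed in (i).

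For part (ii), combining (i) with $\gamma_n^\sharp = g_n + \Delta_n$ yields the linear recurrence
$$g_n - 1 \;=\; d\,(g_{n-1} - 1) \,+\, d^{n-2}\,A \,-\, \Delta_n, \qquad n \geq 2,$$
where $A = (\gamma_2^\sharp - 1) - d(g_1 - 1) = (\gamma_2 - 1) - d(g_1 - 1)$. Dividing through by $d^n$ and telescoping from $i = 2$ to $n$ gives
$$\frac{g_n - 1}{d^n} \,-\, \frac{g_1 - 1}{d} \;=\; (n-1)\,\frac{A}{d^2} \,-\, \sum_{i=2}^{n} \frac{\Delta_i}{d^i},$$
and multiplying by $d^n$ recovers the general-case formula. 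The smooth case is obtained by specialization: if all $C_n$ are smooth then $\Delta_i = 0$ for every $i \geq 2$ and $\gamma_2 = g_2$.

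The only delicate point is to identify correctly the degrees appearing in Lemma \ref{genre_tire_arriere}, and to realize that the self-intersection $\Gamma^2$ need not be evaluated directly on $\Cun \times \Cun$: the tautological specialization at $n = 2$ already expresses it in terms of $\gamma_2^\sharp$ and $g_1$, so no independent intersection-theoretic computation on $\Cun\times\Cun$ is required. The rest of the argument is purely bookkeeping on a first-order inhomogeneous linear recurrence.
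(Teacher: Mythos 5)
Your proof is correct and follows the paper's argument essentially verbatim: the same choice of $Y_1 = \widetilde{C}_{n-1}$, $Y_2 = X_1 = X_2 = \Cun$ in Lemma~\ref{genre_tire_arriere}, the same specialization at $n=2$ to eliminate $\Gamma^2$, and the same reduction of part (ii) to a telescoping first-order recurrence via $\gamma_n^\sharp = g_n + \Delta_n$. The only cosmetic difference is that the paper's induction relation is phrased with $(g_2-1)+\Delta_2$ where you use $\gamma_2 - 1$, which are equal by $(\ref{ga_g_Delta})$.
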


\begin{preuve}
To prove~$\ref{ga_g}$, we first apply lemma~\ref{genre_tire_arriere}
with~$Y_1 = \widetilde{C}_{n-1}$,
$Y_2 = \Cun$,~$X_1 = X_2 = \Cun$,
$f_1 = \pi_{n-1}^{n-1}\circ\nu_{n-1}$ (see~\S\ref{s_recursive_tower} for
definitions) and~$f_2 = \Id$. We
get~$n_1 = d^{n-2}$, $n_2 = 1$ and
$$
2\ga_n^\sharp - 2 = d^{n-2}\Gamma^2 + d(2g_{n-1}-2) + d^{n-1}(2g_1 - 2).
$$
In particular, for~$n=2$, this leads to~$\Gamma^2 = (2\gamma_2-2) - 2d(2g_1-2)$.
Substituting this expression of~$\Gamma^2$ in the preceding equation permits to
conclude.

To prove~$\ref{gn}$,
let~$u_n = \frac{g_n -1}{d^n}$. From $\ref{ga_g}$ together with (\ref{ga_g_Delta}), we deduce
the induction relation
$$
u_n = u_{n-1} + \frac{(g_2-1) - d(g_1-1) + \Delta_2}{d^2} - \frac{\Delta_n}{d^n}.
$$
An easy calculation gives the general formula. If all the $C_n$ are smooth, then all~$\Delta_i$ vanish and $\gamma_2=g_2$.
\end{preuve}

\subsection{Another necessary condition for a tower to be good}

We prove that under the irreducibility assumption, 
the tower needs either to be singular, or to be constructed from an \'etale correspondence on a curve $X$ in order to be good.

\begin{prop}\label{SingOuEtale}
Let~$(\Cun,\Cdeux)$ be a correspondence as in section~\ref{s_recursive_tower} and
let~${\mathcal T} = (C_n)_{n\geq 1}$ be the associated recursive tower.
Suppose that~$C_n$ is irreducible for any~$n\geq 1$ and that the genus
sequence~$(g_n)_{n\geq 1}$ tends to infinity. If there is at least one $r \geq 1$ such that $\lambda_r({\mathcal T}) >0$, then
\begin{enumerate}
\item either $C_n$ is singular for any $n$ greater than some $n_0$;

\item or $g_1=g({\Cun}) \geq 2$ and both covers~$\pi_i : \Cdeux\to\Cun$ for $i=1,2$ are \'etale over~$\Cun$.
\end{enumerate}
\end{prop}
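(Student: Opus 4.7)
The plan is to exploit the closed formula for $g_n$ in Proposition~\ref{genre_arithmetic_geometric} and to dichotomize on the sign of the invariant $A := (\gamma_2 - 1) - d(g_1 - 1)$. First, by Lemma~\ref{exacte}, the hypotheses $g_n \to +\infty$ and $\lambda_r(\mathcal{T}) > 0$ force the sequence $g_n / d^{n-1}$ to converge to some finite $\ell > 0$. Dividing the general-case formula by $d^{n-1}$ rewrites it as
\begin{equation*}
\frac{g_n - 1}{d^{n-1}} \;=\; \frac{n-1}{d}\, A \;+\; (g_1 - 1) \;-\; B_n,
\qquad B_n := \sum_{i=2}^{n} \frac{\Delta_i}{d^{i-1}} \;\geq\; 0.
\end{equation*}
Moreover, Riemann--Hurwitz applied to $\pi_1 : \widetilde{\Cdeux} \to \Cun$ of degree $d$ gives $g(\widetilde{\Cdeux}) - 1 \geq d(g_1 - 1)$, and together with $\gamma_2 \geq g(\widetilde{\Cdeux})$ this yields $A \geq 0$. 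The two cases $A = 0$ and $A > 0$ will correspond to the two alternatives in the conclusion.

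Suppose first that $A = 0$. Then both inequalities in the chain $\gamma_2 \geq g(\widetilde{\Cdeux}) \geq d(g_1 - 1) + 1$ are equalities. The first forces $\gamma_2 = g(\widetilde{\Cdeux})$, so $\Cdeux$ is smooth; the second is exactly the equality case in Riemann--Hurwitz for $\pi_1$, forcing $\pi_1$ to be \'etale, and the same argument applied verbatim to $\pi_2$ shows that $\pi_2$ is \'etale as well. The formula now reduces to $(g_n - 1)/d^{n-1} = (g_1 - 1) - B_n$; since $B_n \geq 0$ and the left-hand side tends to $\ell > 0$, we must have $g_1 - 1 \geq \ell > 0$, hence $g_1 \geq 2$. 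This is conclusion~(ii).

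Suppose instead that $A > 0$. Then the term $(n-1)A/d$ grows without bound, so in order for the formula to be compatible with a finite limit of the left-hand side, $B_n$ must grow linearly in $n$ at rate $A/d$. Since $B_n - B_{n-1} = \Delta_n / d^{n-1}$, this forces $\Delta_n > 0$ for infinitely many $n$, and hence $C_n$ to be singular for infinitely many $n$.

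The main obstacle I foresee is to upgrade this last conclusion from ``infinitely many $n$'' to ``every $n$ larger than some $n_0$'', as required by statement~(i). I would argue this by combining the characterization of singular points of $C_n$ in Proposition~\ref{singular} with the observation that $A > 0$ implies that either $\Cdeux$ is singular (i.e.\ $\gamma_2 > g(\widetilde{\Cdeux})$) or one of the two projections $\pi_i$ is ramified (i.e.\ strict inequality in Riemann--Hurwitz); in either situation, the fibre-product presentation $C_{n+1} = C_n \times_\Cun \Cdeux$ propagates the defect upward, so that every $C_n$ above a sufficiently high level inherits singular points. This then yields conclusion~(i) and completes the dichotomy.
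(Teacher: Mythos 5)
Your argument is correct and is, up to re-organization, the paper's own. The paper phrases the dichotomy contrapositively: it supposes every $C_n$ smooth (the negation of~(i), once one grants the monotonicity discussed below), applies the smooth-case genus formula (where $\gamma_2 = g_2$), and splits on whether $(g_2-1) - d(g_1-1)$ vanishes --- nonzero forces $g_n \sim c\,(n-1)d^{n-2}$ and hence $\lambda_r = 0$ for all $r$, while zero forces both $\pi_i$ \'etale and, after ruling out $g_1 \leq 1$, gives~(ii). You instead keep the general formula with the $\Delta_i$'s, observe $A=(\gamma_2-1)-d(g_1-1)\geq 0$, and split on $A=0$ versus $A>0$; this is slightly more transparent, as it isolates which geometric defect (singularity of $\Cdeux$ or ramification of a $\pi_i$) produces which alternative, and you do not need to presuppose smoothness before seeing which branch you are in.

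The ``obstacle'' you raise at the end is immaterial, and the fibre-product hand-waving should be replaced by one clean observation that both proofs rely on (the paper uses it silently to justify that negating~(i) amounts to ``every $C_n$ is smooth''): singularity is monotone in the level. Indeed, if $\Delta_m>0$ then $C_m^\sharp$ is singular, hence so is $C_m$ (otherwise the finite birational morphism $C_m^\sharp\to C_m$ from an irreducible curve onto a smooth irreducible target would be an isomorphism); and, as recorded explicitly in the proof of Proposition~\ref{singular}, a singular point $(P_1,\ldots,P_m)\in C_m$ extends, using surjectivity of $\pi_1:\Cdeux\to\Cun$, to a singular point $(P_1,\ldots,P_m,P_{m+1},\ldots,P_n)\in C_n$ for every $n\geq m$. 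So ``singular for infinitely many $n$'' is automatically ``singular for all $n$ past some $n_0$.'' With that one sentence inserted, your proof is complete.
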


\begin{preuve} Suppose that $C_n$ is smooth for any $n \geq 1$. Then by the last item of proposition~\ref{genre_arithmetic_geometric}, 
one obtains for any $n \geq 1$
$$
g_n =
(n-1)d^{n-2}\left[(g_2-1) - d(g_1-1)\right] + d^{n-1}(g_1-1) + 1.
$$
If~$(g_2-1) - d(g_1-1) \not= 0$, then
$$
g_n \sim (n-1)\left((g_2-1) - d(g_1-1)\right)d^{n-2}.
$$
On the other hand, using the projection morphism from~$C_n$ to~$C_1$
given by~$(P_1,\ldots,P_n) \mapsto P_1$ of degree~$d^{n-1}$, one
deduces that
$$
N_r(C_n) \leq N_r(C_1) \times d^{n-1}
$$
for any $r \geq 1$. Therefore,
$$
\frac{N_r(C_n)}{g_n}
\leq
\frac{N_r(C_1)d^{n-1}}{g_n}
\sim
\frac{N_r(C_1)}{(g_2-1)-d(g_1-1)} \times \frac{d^{n-1}}{(n-1)d^{n-2}}
\underset{n\to+\infty}{\longrightarrow} 0
$$
and~$\lambda_r({\mathcal T}(\Cun,\Cdeux)) = 0$.

If~$(g_2-1) - d(g_1-1) = 0$, then both projections must be \'etale and one
must have~$g_1 = g(\Cun) \geq 1$. Finally if~$\Cun$ is an elliptic curve, then  Riemann-Hurwitz yields to $g_n=1$ for any $n$, which doesn't tend to infinity.
\end{preuve}

It worth to noticing that if both morphisms are non-\'etale, then not only the tower~$(C_n)_n$ need to be singular, but it needs to be {\em sufficiently singular}.
More precisely, by lemma~\ref{exacte},
the genus sequence must behave like~$c \times d^n$ for
some~$c > 0$. By proposition~\ref{genre_arithmetic_geometric},
the measures of singularities must be large
enough to satisfy:
$$
\sum_{i=2}^n d^{n-i} \Delta_i
=
(n-1)d^{n-2}\left[(\gamma_2-1)-d(g_1-1)\right] + c' \times d^{n}
+ o(d^n)
$$
for some~$c' > 0$.

\medbreak

This proposition~\ref{SingOuEtale} motivates a more accurate study of singular points of $C_n$. This is the aim of the next section.

\subsection{Singular points of $C_n$} \label{SectionSingular}

The goal of this section is twofold. First, we characterize the singular points of the
curves~$C_n$ in proposition~\ref{singular}. Then we prove corollary \ref{bouclesing} about the singularities of cycles which will be a key point later; it will be responsible of the crucial ``$-1$" at the end of formula (\ref{moinsun}).

\bigbreak

To begin with, let~$X$ and~$Y$ be two 
projective absolutely
irreducible curves over\footnote{In fact, a large part of this section works over an arbitrary field $k$.}~$\FF_q$ and let~$\Gamma$ be a correspondence between~$X$ and~$Y$,
without any vertical, nor horizontal, components. We still denote
by~$\pi_1$ and~$\pi_2$ the projections onto the first and second factors.
Let~$P \in X$ and~$Y \in Q$ be geometric {\em smooth} points
such that~$(P,Q) \in \Gamma$. Consider affine neighborhoods
of~$P\in U \subset \Aff^r$,~$Q\in V \subset \Aff^s$ and~$(P,Q)\in W \subset \Aff^{r+s}$. Suppose that the two affine curves~$U$ and~$V$
are respectively defined by~$(r-1)+\rho$ and~$(s-1) + \sigma$ equations
(where~$r,s\geq 1$ and~$\rho,\sigma\geq 0$); suppose also that besides the
equations of~$U$ and~$V$, we need~$1+\tau$ more equations to
define~$W$ (where~$\tau\geq 0$).
Taking into account that the equations defining~$U$ (resp.~$V$) only depend
on the $r$ first (resp. $s$ last) indeterminates, we deduce that the
jacobian matrix of the point~$(P,Q)\in W$ has the following shape:
\begin{equation}\label{J_Gamma}
J_{\Gamma}(P,Q)
=
\begin{tikzpicture}[baseline=(M.center)]
\matrix (M) [matrix of math nodes,left delimiter=(,right delimiter=),
             nodes in empty cells,
             ]
{
&        & & &        & \\
&        & & &        & \\
& J_X(P) & & &        & \\
&        & & &        & \\
&        & & &        & \\
&        & & &        & \\
& A      & & & B      & \\
&        & & &        & \\
&        & & &        & \\
&        & & &        & \\
&        & & & J_Y(Q) & \\
&        & & &        & \\
&        & & &        & \\
};
\draw[rounded corners] (M-1-1.north west) rectangle (M-5-3.south east);
\draw[rounded corners] (M-9-4.north west) rectangle (M-13-6.south east);
\draw[rounded corners] (M-6-1.north west) rectangle (M-8-3.south east);
\draw[rounded corners] (M-6-4.north west) rectangle (M-8-6.south east);
\end{tikzpicture}
\end{equation}
where~$J_X(P)$ and~$J_Y(Q)$ denote the jacobian matrices of~$X$ at~$P$
and~$Y$ at~$Q$.
Since the curves~$X$ and~$Y$ are supposed to be smooth at~$P$ and~$Q$,
the jacobian submatrices
$J_X(P)$ and~$J_Y(Q)$ have rank
equal to~$(r-1)$ and~$(s-1)$ respectively.
Therefore, due to its shape, $J_\Gamma(P,Q)$ has rank greater or equal
to~$r+s-2$. On the other hand,~$J_\Gamma(P,Q)$ has rank less
or equal to~$r+s-1$ since~$\Gamma$ is a curve locally embedded in~$\Aff^{r+s}$. 
We easily deduce that
\begin{align}
\rang(J_\Gamma(P,Q)) = r+s-2
\quad&\Longleftrightarrow\quad
\rang\begin{pmatrix}J_X(P)\\A\end{pmatrix} = r-1
\;\text{and}\;
\rang\begin{pmatrix}B\\J_Y(Q)\end{pmatrix} = s-1 \label{rg_min}
\end{align}
and thus that
\begin{align}
\rang(J_\Gamma(P,Q)) = r+s-1
\quad&\Longleftrightarrow\quad
\rang\begin{pmatrix}J_X(P)\\A\end{pmatrix} = r
\;\text{or}\;
\rang\begin{pmatrix}B\\J_Y(Q)\end{pmatrix} = s \label{rg_max}
\end{align}

\medbreak

{\bfseries Study of the smoothness of~$\Gamma$ at $(P, Q)$.} The point $(P, Q)$ is smooth if and only if the jacobian matrix has maximal rank~$r+s-1$, that is
by~(\ref{rg_max}):
\begin{align}
\genfrac{}{}{0pt}{0}
{\text{the point~$(P,Q) \in \Gamma$}}
{\text{is {\bfseries smooth}}}
\quad&\Longleftrightarrow\quad
\rang\begin{pmatrix}J_X(P)\\A\end{pmatrix} = r
\;\text{or}\;
\rang\begin{pmatrix}B\\J_Y(Q)\end{pmatrix} = s \label{pt_lisse}
\end{align}
In that case, one can
extract from~$J_X(P)$ an $(r-1)\times r$ block~$J'_X(P)$ of maximal rank,
from~$J_Y(Q)$ an~$(s-1)\times s$ block~$J'_Y(Q)$ of maximal rank and from
the "correspondence" block $A B$
exactly one line such that
the $(r+s-1)\times (r+s)$ matrix
$$
J'_\Gamma(P,Q)
=
\begin{tikzpicture}[baseline=(M.center)]
\matrix (M) [matrix of math nodes,left delimiter=(,right delimiter=),
             nodes in empty cells,
             ]
{
&        & & &        & \\
& J'_X(P) & & &        & \\
&        & & &        & \\
a_1 &    & a_r & b_1 &  & b_s \\
&        & & &        & \\
&        & & & J'_Y(Q) & \\
&        & & &        & \\
};
\draw[rounded corners] (M-1-1.north west) rectangle (M-3-3.south east);
\draw[rounded corners] (M-5-4.north west) rectangle (M-7-6.south east);
\draw[loosely dotted,thick] (M-4-1.east)--(M-4-3.west);
\draw[loosely dotted,thick] (M-4-4.east)--(M-4-6.west);
\end{tikzpicture}
$$
has maximal rank equal to~$(r+s-1)$. The rank of each block does not depend
on the choice of the line in the correspondence block~$AB$ since one must have
$$
\rang
\begin{pmatrix}&J'_X(P)&\\a_1&\cdots&a_r\end{pmatrix}
=
\rang\begin{pmatrix}J_X(P)\\A\end{pmatrix}
\quad\text{and}\quad
\rang
\begin{pmatrix}b_1&\cdots&b_s\\&J'_Y(Q)&\end{pmatrix}
=
\rang\begin{pmatrix}B\\J_Y(Q)\end{pmatrix}.
$$
(in fact the vector space generated by the lines of~$J_X(P)$
and~$(a_1,\ldots,a_r)$ must be equal to the vector space generated by the lines
of~$J_X(P)$ and the lines of~$A$).
Moreover the minors of maximal size of~$J'_\Gamma(P,Q)$ can
be easily described in terms of the minors of maximal size
of~$J'_X(P)$ and~$J'_Y(Q)$. More precisely,
let~$\delta_1(J'_X(P)),\ldots,\delta_r(J'_X(P))$
and~$\delta_1(J'_Y(Q)), \ldots, \delta_s(J'_Y(Q))$ be the minors of
maximal size of~$J'_X(P)$ and~$J'_Y(Q)$ (listed with alternate signs),
and define
\begin{equation} \label{Deltaa}
\Deltaa(P,Q) = \begin{vmatrix}&J'_X(P)&\\a_1&\cdots&a_r\end{vmatrix}
\qquad\text{and}\qquad
\Deltab(P,Q) = \begin{vmatrix}b_1&\cdots&b_s\\&J'_Y(Q)&\end{vmatrix}.
\end{equation}
Then the minors of maximal size of~$J'_\Gamma(P,Q)$
are the~$\delta_i(J'_X(P))\Deltab(P,Q)$ for~$1\leq i\leq r$ and
the~$\delta_j(J'_Y(Q))\Deltaa(P,Q)$ for~$1\leq j\leq s$.

Recall that if~$M \in M_{n-1,n}(k)$ is a
matrix of rank~$n-1$, then its kernel is generated by
the vector whose coordinates are its minors of size~$(n-1)$ with alternate
signs. Therefore, the kernel of~$J'_\Gamma(P,Q)$,
which is nothing else than the tangent line of~$\Gamma$
at~$(P,Q)$, is thus generated by the
vector
$$
\begin{pmatrix}
\delta_1(J'_X(P)) \Deltab(P,Q)\\
\vdots\\
\delta_r(J'_X(P)) \Deltab(P,Q)\\
\delta_1(J'_Y(Q)) \Deltaa(P,Q)\\
\vdots\\
\delta_s(J'_Y(Q)) \Deltaa(P,Q)\\
\end{pmatrix}.
$$
 Because at least one of the minors
of~$J'_X(P)$ and of~$J'_Y(Q)$ are non zero by smoothness of~$X$ and~$Y$
at~$P$ and~$Q$, this vector is non zero if and only if $\Deltaa(P,Q)$
or~$\Deltab(P,Q)$ are non zero, that is
if and only if~$ \Gamma$ is smooth at $(P, Q)$ by (\ref{pt_lisse}).

\medbreak

Note for later use that, independently of the choice of the line inside
the block $AB$, one has
\begin{equation}\label{pi_prime_nul}
\Deltaa(P,Q) = 0
\;\Leftrightarrow\;
\rang\begin{pmatrix}J_X(P)\\A\end{pmatrix} = r-1
\quad\text{and}\quad
\Deltab(P,Q) = 0
\;\Leftrightarrow\;
\rang\begin{pmatrix}B\\J_Y(Q)\end{pmatrix} = s-1.
\end{equation}

\medbreak

{\bfseries Study of \'etaleness of $\pi_i$ at~$(P,Q) \in \Gamma$.}
Since~$Q$ is a smooth point of $Y$, the projection~$\pi_2 : \Gamma\to Y$ is \'etale
at~$(P,Q)$ if and only if the point~$(P,Q)\in\Gamma$ is smooth and the
induced map~$\Ker(J_\Gamma(P,Q)) \to \Ker(J_Y(Q))$
is an isomorphism, that is non
zero since kernels are lines here. In view of the generator of the tangent
line at~$(P,Q)$, this application is non zero if and only if~$\Deltaa(P,Q)$
is non zero because at least one of the minors of~$J'_Y(Q)$ is non zero.
Thanks to~$(\ref{pi_prime_nul})$, we deduce that
\begin{equation}\label{pi_2_etale}
\genfrac{}{}{0pt}{0}
        {\text{$\pi_2$ is \'etale}}{\text{at~$(P,Q)$}}
\quad\Leftrightarrow\quad
\rang(J_\Gamma(P,Q)) = r+s-1
\;\text{and}\;
\rang\begin{pmatrix}J_X(P) \\ A\end{pmatrix} = r
\quad\overset{\text{by } (\ref{rg_max})}{\Leftrightarrow}\quad
\rang\begin{pmatrix}J_X(P) \\ A\end{pmatrix} = r.
\end{equation}
Of course, we also prove the same way that
\begin{equation}\label{pi_1_etale}
\genfrac{}{}{0pt}{0}
        {\text{$\pi_1$ \'etale}}{\text{at~$(P,Q)$}}
\quad\Leftrightarrow\quad
\rang(J_\Gamma(P,Q)) = r+s-1
\;\text{and}\;
\rang\begin{pmatrix}B\\J_Y(Q)\end{pmatrix} = s
\quad\overset{\text{by } (\ref{rg_max})}\Leftrightarrow\quad
\rang\begin{pmatrix}B\\J_Y(Q)\end{pmatrix} = s.
\end{equation}

From the negation of~(\ref{pt_lisse}), and from~(\ref{pi_2_etale}) and~(\ref{pi_1_etale}), we deduce
that the singularity of~$(P,Q)$ in $\Gamma$ can be characterized only
using \'etaleness by
\begin{equation}\label{pt_sing_not_etale}
\genfrac{}{}{0pt}{0}
{\text{the point~$(P,Q)\in\Gamma$}}
{\text{is {\bfseries singular}}}
\quad\Longleftrightarrow\quad
\genfrac{}{}{0pt}{0}
{\text{both projections~$\pi_1$ and~$\pi_2$}}
{\text{are {\bfseries not} \'etale at~$(P,Q)$}}.
\end{equation}
In the following proposition, we prove that such a characterization still
occurs for the curves~$C_n$ of a recursive tower.

\begin{prop} \label{singular}
Let~$(\Cun,\Cdeux)$ be a correspondence as in section~\ref{s_recursive_tower} and
let~$(C_n)_{n\geq 1}$ be the associated recursive tower.
A point~$(P_1,\ldots,P_n)\in C_n$ is singular if and only if there
exist~$1\leq i \leq j < n$ such that~$\pi_2$ is not \'etale at~$(P_i,P_{i+1})$
and~$\pi_1$ is not \'etale at~$(P_j,P_{j+1})$.
\end{prop}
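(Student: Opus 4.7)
The plan is to extend the local Jacobian analysis of Section~\ref{SectionSingular} from a pair $(P,Q)\in\Cdeux$ to an $n$-tuple $(P_1,\dots,P_n)\in C_n$. Locally inside $\Cun^n\subset\Aff^{rn}$, the curve $C_n$ is cut out by the equations of $\Cun$ on each of the $n$ coordinate $r$-blocks, together with, for each $i=1,\dots,n-1$, the $(1+\tau)$ equations defining $\Cdeux$ at the pair $(P_i,P_{i+1})$, which couple only the $i$-th and $(i+1)$-th $r$-block. Consequently $J_{C_n}(P_1,\dots,P_n)$ is block-tridiagonal with diagonal blocks $J_{\Cun}(P_i)$ and coupling blocks $(A_i\mid B_i)$ straddling the $(i,i+1)$-th block pair.

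Repeating the reduction performed for $n=2$, one extracts from each $J_{\Cun}(P_i)$ an $(r-1)\times r$ sub-block $J'_{\Cun}(P_i)$ of rank $r-1$ (possible since $\Cun$ is smooth at $P_i$) and retains one row $(a_i,b_i)$ of each coupling block. The resulting reduced matrix $J'$ has size $(nr-1)\times nr$ and the same rank as $J_{C_n}(P_1,\dots,P_n)$. The tangent space at $(P_1,\dots,P_n)$ is $\Ker J'$, which always contains a line, and the point is smooth if and only if $\rang J'=nr-1$, i.e.\ $\dim\Ker J'=1$.

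Next, parametrise $\Ker J'$ using the diagonal structure. Every $(v_1,\dots,v_n)\in\Ker J'$ satisfies $v_i\in\Ker J'_{\Cun}(P_i)$, which is the line generated by the cofactor vector $t_i=(\delta_1(J'_{\Cun}(P_i)),\dots,\delta_r(J'_{\Cun}(P_i)))$, non-zero by smoothness of $\Cun$ at $P_i$. Writing $v_i=\lambda_i t_i$ and expanding the coupling equation $a_i v_i+b_i v_{i+1}=0$ by cofactors (exactly as led to $\pi'_2$ and $\pi'_1$ in Section~\ref{SectionSingular}), one obtains
$$
\alpha_i\,\lambda_i\;+\;\beta_i\,\lambda_{i+1}=0,\qquad i=1,\dots,n-1,
$$
where $\alpha_i=\pm\pi'_2(P_i,P_{i+1})$ and $\beta_i=\pm\pi'_1(P_i,P_{i+1})$. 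Hence $\Ker J'$ is isomorphic to the kernel of the bidiagonal $(n-1)\times n$ matrix $M$ whose row $i$ has entries $\alpha_i$ in column $i$ and $\beta_i$ in column $i+1$; singularity of $(P_1,\dots,P_n)$ therefore amounts to $\rang M<n-1$.

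The last step is pure linear algebra on $M$: $\rang M<n-1$ if and only if there exist $1\leq i\leq j<n$ with $\alpha_i=0$ and $\beta_j=0$. Given a nontrivial relation $\sum_k c_k r_k=0$ among the rows of $M$, let $i_0$ (resp.\ $j_0$) be the smallest (resp.\ largest) index with $c_k\neq 0$; the coefficient of the $i_0$-th (resp.\ $(j_0+1)$-th) standard basis vector in the relation forces $\alpha_{i_0}=0$ (resp.\ $\beta_{j_0}=0$), with $i_0\leq j_0$. Conversely, if $\alpha_i=\beta_j=0$ with $i\leq j$, then rows $r_i,\dots,r_j$ of $M$ are supported on the $j-i$ columns $i+1,\dots,j$, so these $j-i+1$ rows are linearly dependent. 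Finally, equivalences~\eqref{pi_2_etale}, \eqref{pi_1_etale} and~\eqref{pi_prime_nul} translate $\alpha_i=0$ and $\beta_j=0$ respectively into the non-\'etaleness of $\pi_2$ at $(P_i,P_{i+1})$ and of $\pi_1$ at $(P_j,P_{j+1})$, concluding the proof. The main technical nuisance is the first step, namely the bookkeeping of the block reduction to $J'$, which is a direct $n$-stage repetition of the $n=2$ argument.
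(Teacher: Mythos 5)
Your argument is correct and reaches the same conclusion, but it organizes the Jacobian analysis differently from the paper. Both proofs start from the identical block--bidiagonal Jacobian $J_{C_n}(P_1,\dots,P_n)$ and rely on the same determinants $\pi'_1, \pi'_2$ and the equivalences~\eqref{pi_prime_nul}, \eqref{pi_2_etale}, \eqref{pi_1_etale}. The paper's route, however, is inductive: it first observes that singularity of a prefix $(P_1,\dots,P_m)$ propagates to $(P_1,\dots,P_n)$, then picks the \emph{first} index $j$ at which $(P_1,\dots,P_{j+1})$ is singular, views $C_{j+1}$ as a correspondence inside the smooth surface $C_j\times\Cun$, and applies the two-curve jacobian analysis there; the factorization of a $jr\times jr$ determinant into $\prod_{k\le j}\pi'_2(P_k,P_{k+1})$ then produces the index $i$. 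You instead linearize uniformly: since each tangent line $\Ker J'_{\Cun}(P_i)$ is one-dimensional with a fixed generator $t_i$, the whole kernel of $J'$ is carried isomorphically onto the kernel of the $(n-1)\times n$ bidiagonal matrix $M=(\alpha_i,\beta_i)$, and the statement collapses to a transparent rank computation on $M$. This is shorter, avoids the singularity-propagation lemma, and makes the role of the indices $i\le j$ visible. One point you leave implicit (as does the paper): the row $(a_i,b_i)$ retained from the coupling block $(A_i,B_i)$ must be chosen so that $(\alpha_i,\beta_i)$ spans the same row space as $(A_i t_i,\,B_i t_{i+1})$ restricted to the tangent lines --- i.e.\ take $(\alpha_i,\beta_i)\neq(0,0)$ whenever that $( (1+\tau)\times 2)$ block has rank~$1$. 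Since that restricted block always has rank at most~$1$ (as $\rang J_\Gamma(P_i,P_{i+1})\ge 2r-2$), such a choice exists, and it guarantees $\Ker J' = \Ker J_{C_n}$; with that remark made explicit, your proof is complete and, in my view, cleaner than the paper's.
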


\begin{preuve}
One can suppose that the points~$P_1,\ldots,P_n\in \Cun$ are contained in the
same affine open subspace $U \subset \Aff^r$, and that the affine
curve~$U$ is
defined by~$(r-1)+\rho$ equations. Suppose also
that, locally in~$U\times U \subset \Aff^{2r}$, $\Gamma$ is defined, in addition of the equations coming from $U$, by~$1+\tau$ equations
in~$\Aff^r\times\Aff^r=\Aff^{2r}$. Thus~$C_n$ is locally embedded
in~$\Aff^{nr}$ and the jacobian matrix of~$C_n$ at~$(P_1,\ldots,P_n)$
is an $\left((nr-1) + n\rho + (n-1)\tau\right)\times nr$ matrix
looking like
$$
J_{C_n}(P_1,\ldots,P_n)
=
\begin{tikzpicture}[baseline=(M.center)]
\matrix (M) [matrix of math nodes,left delimiter=(,right delimiter=),
             nodes in empty cells,
             minimum width = 4em,
             ]
{
J_\Cun(P_1) &            &          &        & \\
A_1        & B_1        &          &        & \\
           & J_\Cun(P_2) &          &        & \\
           &            & \ddots   &        & \\
           &            &          & A_{n-1} & B_{n-1} \\
           &            & &        & J_\Cun(P_n) \\
};
\draw[rounded corners] (M-1-1.north west) rectangle (M-1-1.south east);
\draw[rounded corners] (M-2-1.north west) rectangle (M-2-1.south east);
\draw[rounded corners] (M-2-2.north west) rectangle (M-2-2.south east);
\draw[rounded corners] (M-3-2.north west) rectangle (M-3-2.south east);
\draw[rounded corners] (M-5-4.north west) rectangle (M-5-4.south east);
\draw[rounded corners] (M-5-5.north west) rectangle (M-5-5.south east);
\draw[rounded corners] (M-6-5.north west) rectangle (M-6-5.south east);
\end{tikzpicture}
$$
Since every~$P_i\in X$ is smooth, every "jacobian" block has rank equal
to~$(r-1)$. Since~$\Cdeux$ is locally a curve in~$\Aff^{2r}$, every block
looking like the jacobian matrix of equation~(\ref{J_Gamma}) has rank less
than or equal to~$(2r-1)$ (and greater than or equal to~$(2r-2)$).
Hence the rank
of~$J_{C_n}(P_1,\ldots,P_n)$ is greater
than or equal to~$n(r-1)$ (contributions of the "jacobian" blocks) and every
"correspondence"-blocks has a contribution to the whole rank of at most~$1$.

A point~$(P_1,\ldots,P_n)\in C_n$ is smooth if and only if the rank
of~$J_{C_n}(P_1,\ldots,P_n)$ is equal to~$nr-1 = n(r-1) + (n-1)$. This is
plausible only if each of the $(n-1)$ "correspondence"-blocks has a contribution
to the whole rank exactly equal to~$1$. Conversely, a
point~$(P_1,\ldots,P_n) \in C_n$ is singular if and only if there exists at
least one "correspondence"-block whose lines are all in the vector space
generated by the remaining lines. In particular, if~$(P_1,\ldots,P_m)\in C_m$
is singular then so is~$(P_1,\ldots,P_m,P_{m+1},\ldots,P_n)\in C_n$ for
every~$n\geq m$. Then~$(P_1,\ldots,P_n) \in C_n$ is singular if and only
if there exists~$1\leq j\leq n-1$ such that~$(P_1,\ldots,P_j) \in C_j$ is smooth
while~$(P_1,\ldots,P_{j+1})\in C_{j+1} \subset C_j\times \Cun$ is singular.
Applying the beginning of this section to the correspondence~$C_{j+1}$
on the product~$C_j\times \Cun$ of smooth curves, we deduce that
this is equivalent to
$$
\rang
\begin{tikzpicture}[baseline=(M.center)]
\matrix (M) [matrix of math nodes,left delimiter=(,right delimiter=),
             nodes in empty cells,
             minimum width = 2.5em,
             ]
{
&        & \\
&        & \\
&        & \\
 &    & A_j \\
};
\draw[rounded corners] (M-1-1.north west) rectangle (M-3-3.south east);
\draw[rounded corners] (M-4-3.north west) rectangle (M-4-3.south east);
\node at (M-2-2) {$J_{C_j}(P_1,\ldots,P_j)$};
\end{tikzpicture}
 = rj-1
\qquad\text{and}\qquad
\rang\begin{pmatrix}B_j\\J_X(P_{j+1})\end{pmatrix} = r-1,
$$
where~$A_j$ and~$B_j$ denote the two blocks coming from the
condition~$(P_j, P_{j+1})\in\Cdeux$. By the negation of~(\ref{pi_1_etale}),
the second
equality occurs if and
only if the projection~$\pi_1$ is not \'etale at~$(P_j,P_{j+1})$.
As to the first equality, it occurs if and only if there
exists~$1\leq i \leq j$ such that~$\pi_2$ is not \'etale at~$(P_i,P_{i+1})$,
that is
$$
\exists 1\leq i\leq j, \qquad
\rang\begin{pmatrix}J_X(P_{i})\\A_i\end{pmatrix} = r-1.
$$
Indeed, since~$(P_1,\ldots,P_j)\in C_j$ is smooth,
the jacobian matrix~$J_{C_j}(P_1,\ldots,P_j)$ has a rank equal
to~$rj-1$. As in the case of a correspondence on a product of only two
curves, one can extract $(r-1)$-rank blocks~$J'_X(P_1), \ldots, J'_X(P_j)$
from the "jacobian" blocks~$J_X(P_1), \ldots, J_X(P_j)$, and
lines~$(a_{k,1},\ldots,a_{k,r},b_{k,1},\ldots,b_{k,r})$
from the "correspondence'' block~$(A_k,B_k)$ for~$1\leq k\leq j-1$, to obtain a
$(rj-1) \times rj$ matrix~$J'_{C_j}(P_1,\ldots,P_j)$ of maximal rank.
The first rank equality is thus equivalent to the fact
that for every choice of line~$(a_{j,1},\ldots,a_{j,r})$ in the block~$A_j$,
this line must be a linear combination of the lines
of~$J'_{C_j}(P_1,\ldots,P_j)$. So one must have
$$
\begin{vmatrix}&J'_X(P_1)&\\a_{1,1}&\cdots&a_{1,r}\end{vmatrix}
\times\cdots\times
\begin{vmatrix}&J'_X(P_{j-1})&\\a_{j-1,1}&\cdots&a_{j-1,r}\end{vmatrix}
\times
\begin{vmatrix}&J'_X(P_j)&\\a_{j,1}&\cdots&a_{j,r}\end{vmatrix} = 0.
$$
Hence,
$$
\text{either } \rang\begin{pmatrix}J_X(P_j)\\A_j\end{pmatrix}=r-1
\quad\text{or}\quad
\exists 1\leq i\leq j-1, \;
\begin{vmatrix}&J'_X(P_{i})&\\a_{i,1}&\cdots&a_{i,r}\end{vmatrix} = 0.
$$
But, by~$(\ref{pi_prime_nul})$, the last case is equivalent to
$$
\rang\begin{pmatrix}J_X(P_i)\\A_i\end{pmatrix}=r-1.
$$
In both cases, by the negation of~(\ref{pi_2_etale}), we conclude that there
exists~$1\leq i\leq j$ such that the projection~$\pi_2$ is not \'etale
at~$(P_i,P_{i+1})$.
\end{preuve}

\begin{rk}
This is a particular feature of recursive towers. It doesn't hold true in general that $(P, Q)$ is singular in the pullback curve $\left(\pi \times \Id\right)^*(\Cdeux)$ for any correspondence $\Cdeux$ on $\Cun$, any morphism $\pi : Y \to \Cun$   from a singular curve $Y$ (here $Y = C_{m}$) and any singular point $P$ on $Y$ such that $(\pi(P), Q) \in \Cdeux$. 
\end{rk}

In the case of correspondences of type $\Cdeux_{f, g}$ on $X = {\mathbb P}^1$ so widely used in the literature,
one can take~$r = 1$
and~$\rho=\tau=0$ in the above proof. The characterization of the singular points becomes:

\begin{cor}
Let~$\Cdeux_{f,g}$ be a correspondence on~$\PP^1$
where~$f$ and~$g$ are two non constant functions on~$\PP^1$
and let~$(C_n)_{n\geq 1}$
be the corresponding singular recursive tower. A
point~$(P_1,\ldots,P_n)\in C_n$ is singular if and only if there
exist~$1\leq i< j\leq n$ such that~$P_i$ is a ramified point of~$f$
and~$P_j$ is a ramified point of~$g$.
\end{cor}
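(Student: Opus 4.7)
The plan is to derive the corollary directly from Proposition~\ref{singular} by translating the étaleness conditions on the two projections $\pi_1,\pi_2 : \Gamma_{f,g}\to\PP^1$ into ramification conditions on the rational functions $f$ and $g$.

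First I would set up local coordinates: working in an affine chart of $\PP^1\times\PP^1$, the curve $\Gamma_{f,g}$ is cut out by the single equation $f(x)-g(y)=0$, so in the notation of \S\ref{SectionSingular} one has $r=s=1$ and $\rho=\sigma=\tau=0$. There are no equations defining $\PP^1$ itself (the blocks $J_X(P)$ and $J_Y(Q)$ are empty), and the correspondence block $(A\ B)$ reduces to the $1\times 2$ row $(f'(P),\,-g'(Q))$. Consequently the criteria~(\ref{pi_2_etale}) and~(\ref{pi_1_etale}) for étaleness of the projections at a point $(P,Q)\in\Gamma_{f,g}$ simplify to
$$
\pi_2 \text{ étale at } (P,Q) \iff f'(P)\neq 0 \iff P \text{ is unramified for } f,
$$
$$
\pi_1 \text{ étale at } (P,Q) \iff g'(Q)\neq 0 \iff Q \text{ is unramified for } g,
$$
where the last equivalence on each line is the definition of ramification for a morphism $\PP^1\to\PP^1$.

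Next I apply Proposition~\ref{singular}: the point $(P_1,\ldots,P_n)\in C_n$ is singular if and only if there exist indices $1\leq i\leq j\leq n-1$ such that $\pi_2$ fails to be étale at $(P_i,P_{i+1})$ and $\pi_1$ fails to be étale at $(P_j,P_{j+1})$. Substituting the translations just obtained, this becomes: there exist $1\leq i\leq j\leq n-1$ with $f$ ramified at $P_i$ and $g$ ramified at $P_{j+1}$. Setting $i'=i$ and $j'=j+1$ gives indices $1\leq i'<j'\leq n$ with $f$ ramified at $P_{i'}$ and $g$ ramified at $P_{j'}$, which is the statement of the corollary. Conversely, from any such pair $(i',j')$ one recovers $(i,j)=(i',j'-1)$ satisfying the hypothesis of Proposition~\ref{singular}.

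No step looks particularly delicate here; the entire content is the dictionary between étaleness of the two projections from $\Gamma_{f,g}$ and ramification of $f,g$, which is completely transparent because of the separated-variables form of the defining equation. The only mild point of care is to double-check the change of indices $j\mapsto j+1$ so that the inequality $i\leq j$ from Proposition~\ref{singular} becomes the strict inequality $i'<j'$ in the corollary.
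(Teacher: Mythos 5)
Your proposal is correct and follows essentially the same route as the paper: setting $r=1$ (and $\rho=\tau=0$) in the jacobian framework of \S\ref{SectionSingular}, the correspondence block collapses to the single row $(f'(P),-g'(Q))$, so étaleness of $\pi_2$ (resp.\ $\pi_1$) at $(P,Q)$ becomes $f'(P)\neq 0$ (resp.\ $g'(Q)\neq 0$), and Proposition~\ref{singular} gives the result after the index shift $j\mapsto j+1$. The only difference is that you spell out the index change and the converse direction, which the paper leaves implicit.
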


\begin{preuve}
In this context, all the jacobian blocks are empty, and the
correspondence blocks can always be reduced to a single line.
The \'etaleness of the projection~$\pi_2$ (resp.~$\pi_1$) at a
point~$(P,Q)\in \Cdeux_{f,g}$ becomes~$f'(P) \not= 0$ (resp.~$g'(Q) \not=0$).
The corollary follows.
\end{preuve}

\bigbreak

We will need at the end of this paper a characterization of the singular points contained in the
intersection of the curve~$C_n$ and the hypersurface~$P_1=P_n$ in~$\Cun^n$.
In the following proposition, we continue to make use of a local embedding of~$\Cun$
in~$\Aff^r$ and of the associated determinants~$\pi'_1(P,Q)$
and~$\pi'_2(P,Q)$ defined in (\ref{Deltaa}).

\begin{prop}\label{multiple}
Let~$(\Cun,\Cdeux)$ be a correspondence as in section~\ref{s_recursive_tower},
let~$(C_n)_{n\geq 1}$ be the associated recursive tower, and
let~$H_n$ denote the hypersurface of~$\Cun^n$ defined by~$P_1 = P_n$.

Consider~$(P_1,\ldots,P_n) \in C_n$ a smooth point. Then it is singular
in the intersection~$C_n \cap H_n$ if and only if
$$
(-1)^{r(n-1)} \prod_{i=1}^{n-1} \pi'_2(P_i,P_{i+1})
=
\prod_{i=1}^{n-1} \pi'_1(P_i,P_{i+1}),
\qquad \text{in~$\FF_q(P_1,\ldots,P_n)$,}
$$
where,~$\pi'_1$ and~$\pi'_2$ are the determinants defined by (\ref{Deltaa}).
\end{prop}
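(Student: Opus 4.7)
The plan is to interpret ``singular in $C_n\cap H_n$'' as a non-transversality statement and then to explicitly compute the tangent line of $C_n$ at the smooth point $(P_1,\ldots,P_n)$. Since $C_n$ is a curve and $H_n \subset \Cun^n$ is a hypersurface, the scheme-theoretic intersection $C_n\cap H_n$ is non-reduced at this point exactly when the one-dimensional tangent line $T_{(P_1,\ldots,P_n)}C_n$ is contained in the tangent hyperplane
$$T_{(P_1,\ldots,P_n)}H_n = \left\{(w_1,\ldots,w_n)\in\textstyle\bigoplus_i T_{P_i}\Cun : w_1 = w_n \text{ in } T_P\Cun\right\},$$
where $P := P_1 = P_n$.

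To parametrize the tangent line of $C_n$, I would choose for each~$i$ the signed-minor generator $\xi_{P_i} = ((-1)^{k+1}m_k(P_i))_{k=1}^r$ of $T_{P_i}\Cun = \ker J'_\Cun(P_i)$, as in~\S\ref{SectionSingular}. Any tangent vector must have the form $v = (\mu_1\xi_{P_1},\ldots,\mu_n\xi_{P_n})$ for scalars $\mu_i \in\overline{\FF_q}$, because the jacobian blocks in $J'_{C_n}$ force the $i$-th component into $\ker J'_\Cun(P_i)$. Plugging this ansatz into the $i$-th correspondence constraint $A_iv_i+B_iv_{i+1}=0$ reduces it, via Laplace expansion of the two relevant $r\times r$ determinants along the extra row, to the identities
$$A_i\xi_{P_i} = (-1)^{r+1}\pi'_2(P_i,P_{i+1}), \qquad B_i\xi_{P_{i+1}} = \pi'_1(P_i,P_{i+1}),$$
with $\pi'_1,\pi'_2$ as in~(\ref{Deltaa}); the asymmetric sign arises because in~$\pi'_2$ the extra row is placed below the jacobian block while in~$\pi'_1$ it sits above. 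The correspondence constraint thus becomes the recursion
$$\mu_{i+1}\,\pi'_1(P_i,P_{i+1}) = (-1)^r\,\mu_i\,\pi'_2(P_i,P_{i+1}),\qquad i=1,\ldots,n-1.$$

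Multiplying these $n-1$ relations and simplifying yields the universal identity
$$\mu_n\prod_{i=1}^{n-1}\pi'_1(P_i,P_{i+1}) = (-1)^{r(n-1)}\,\mu_1\prod_{i=1}^{n-1}\pi'_2(P_i,P_{i+1}),$$
valid for every vector in the one-dimensional tangent line of $C_n$. Since $P_1 = P_n$, the vectors $\xi_{P_1}$ and $\xi_{P_n}$ coincide, so the containment $v\in T H_n$ is equivalent to $\mu_1 = \mu_n$; substituting this into the universal identity gives exactly the claimed equation.

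The hard part will be the sign bookkeeping leading to the global factor $(-1)^{r(n-1)}$: each step contributes a factor of $(-1)^r$, hinging on the genuine (not conventional) asymmetry between the two Laplace identities for~$A_i\xi_{P_i}$ and~$B_i\xi_{P_{i+1}}$. A secondary technical point is to justify the universal identity in the degenerate cases where some $\pi'_1(P_i,P_{i+1})$ or $\pi'_2(P_i,P_{i+1})$ vanishes; since $(P_i,P_{i+1})$ is smooth in $\Cdeux$ by smoothness of $(P_1,\ldots,P_n)\in C_n$, equation~(\ref{pt_sing_not_etale}) ensures that at most one of them vanishes, which is enough to propagate the recursion consistently and verify the equivalence by a direct case analysis.
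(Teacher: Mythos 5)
Your proof is correct, and it is essentially the paper's argument viewed from the dual side: the paper checks when the extra $r$ rows of $J_{C_n\cap H_n}$ stay inside the row span of $J'_{C_n}$ by expanding the corresponding $nr\times nr$ determinant along the last row, whereas you check when the $1$-dimensional kernel (tangent line) of $J'_{C_n}$ lies in the hyperplane $T H_n$. Both boil down to the same block-bidiagonal structure of $J'_{C_n}$; your recursion $\mu_{i+1}\pi'_1(P_i,P_{i+1})=(-1)^r\mu_i\pi'_2(P_i,P_{i+1})$ is precisely what the paper's factorization of the maximal minors as $\delta_k(J'_X(P_1))\prod\pi'_1$ (for the first block) and $\delta_k(J'_X(P_n))\prod\pi'_2$ (for the last block) encodes, and your sign $(-1)^{r}$ per step is the right one.

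The one place where the two write-ups genuinely differ in cost is the degenerate cases. The paper's determinant expansion gives the factorization of each maximal minor as a polynomial identity, so no case analysis is needed. Your ``universal identity'' $\mu_n\prod\pi'_1=(-1)^{r(n-1)}\mu_1\prod\pi'_2$ cannot be obtained by simply multiplying the recursions and cancelling when some $\mu_i$ with $2\le i\le n-1$ vanishes (which happens exactly when some $\pi'_1(P_i,P_{i+1})$ or $\pi'_2(P_i,P_{i+1})$ vanishes). You flag this, but the fix you cite — smoothness of each $(P_i,P_{i+1})$ in $\Cdeux$ via $(\ref{pt_sing_not_etale})$, i.e.\ at most one of $\pi'_1,\pi'_2$ vanishes at a given edge — is not quite enough on its own. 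To show that ``formula $\Rightarrow\mu_1=\mu_n$'' when both products vanish, you also need to know that every index where $\pi'_2$ vanishes comes strictly after every index where $\pi'_1$ vanishes; that is exactly the content of proposition~\ref{singular} applied to the smooth point $(P_1,\dots,P_n)$, not just the $n=2$ statement $(\ref{pt_sing_not_etale})$. With that ordering in hand the zeros propagate to $\mu_1=\mu_n=0$ and the case analysis closes, so there is no real gap — but it is worth noticing that the needed input is proposition~\ref{singular} itself, and that the paper's direct expansion avoids the issue.
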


\begin{preuve}
We work in the affine neighborhood~$\Aff^{rn}$ of~$(P_1,\ldots,P_n)$.
If~$(P_1,\ldots,P_n) \in C_n\cap H_n$, that is if~$P_1 = P_n$,
then
$$
J_{C_n \cap H_n}(P_1,\ldots,P_n)
=
\begin{tikzpicture}[baseline=(M.center)]
\matrix (M) [matrix of math nodes,left delimiter=(,right delimiter=),
             nodes in empty cells,
             minimum width = 4em,
             column sep={1.5cm,between origins}]
{
&&\\
&J_{C_n}(P_1,\ldots,P_n)&\\
&&\\
I_r&&-I_r\\
};
\draw[rounded corners] (M-1-1.north west) rectangle (M-3-3.south east);
\draw[rounded corners] (M-4-1.north west) rectangle (M-4-1.south east);
\draw[rounded corners] (M-4-3.north west) rectangle (M-4-3.south east);
\draw[loosely dotted,thick] (M-4-1.east)--(M-4-3.west);
\end{tikzpicture}
$$
where~$I_r$ is the identity matrix of size~$r$.
Since~$(P_1,\ldots,P_n) \in C_n$ is assumed to be smooth, the jacobian
matrix~$J_{C_n}(P_1,\ldots,P_n)$ has rank equal to~$nr-1$.
As to the point~$(P_1,\ldots,P_n) \in C_n \cap H_n$, it is singular if and only
if the matrix~$J_{C_n \cap H_n}(P_1,\ldots,P_n)$ is still of rank~$rn-1$,
if and only if the $r$-th last
lines of~$J_{C_n \cap H_n}(P_1,\ldots,P_n)$
lie in the vector space generated by the lines of~$J_{C_n}(P_1,\ldots,P_n)$.

As in the proof of proposition~\ref{singular}, after cancellation
of redundant lines of the jacobian matrix~$J_{C_n}(P_1,\ldots,P_n)$,
we obtain a $(rn-1)\times rn$ matrix~$J'_{C_n}(P_1,\ldots,P_n)$ of maximal rank.
The singularity conditions then reduce to the vanishing of the $r$
determinants
$$
\begin{tikzpicture}[baseline=(M.center)]
\matrix (M) [matrix of math nodes,left delimiter=|,right delimiter=|,
             nodes in empty cells,
             minimum width = 4em,
             column sep={1.5cm,between origins}]
{
&&\\
&J'_{C_n}(P_1,\ldots,P_n)&\\
&&\\
1,0,\ldots,0&&-1,0,\ldots,0\\
};
\draw[rounded corners] (M-1-1.north west) rectangle (M-3-3.south east);
\draw[loosely dotted,thick] (M-4-1.east)--(M-4-3.west);
\end{tikzpicture}
=
\cdots
=
\begin{tikzpicture}[baseline=(M.center)]
\matrix (M) [matrix of math nodes,left delimiter=|,right delimiter=|,
             nodes in empty cells,
             minimum width = 4em,
             column sep={1.5cm,between origins}]
{
&&\\
&J'_{C_n}(P_1,\ldots,P_n)&\\
&&\\
0,\ldots,0,1&&0,\ldots,0,-1\\
};
\draw[rounded corners] (M-1-1.north west) rectangle (M-3-3.south east);
\draw[loosely dotted,thick] (M-4-1.east)--(M-4-3.west);
\end{tikzpicture}
=0.
$$
For each~$i$, $1\leq i \leq r$, expanding the determinant along the last line
leads, since~$P_1 = P_n$, to
$$
(-1)^{rn+i}
\delta_i(J_{\Cun}(P_1))
\prod_{i=1}^{n-1} \pi'_1(P_i,P_{i+1})
+
(-1)^{rn+r(n-1)+i}\times (-1) \delta_i(J_{\Cun}(P_1))
\prod_{i=1}^{n-1} \pi'_2(P_i,P_{i+1}) = 0.
$$
Since~$P_1\in \Cun$ is smooth, at least
one of the~$\delta_i(J_{\Cun}(P_1))$'s, $1\leq i \leq r$, is non-zero and the
result follows.
\end{preuve}

A cycle in $C_n$ is  a point $(P_1, \ldots, P_n) \in C_n \subset \Cun^n$ such that $P_1=P_n$.

\begin{cor} \label{bouclesing}
With the hypothesis of the preceding proposition, let~$(P_1,\ldots,P_{l},P_1)$ be a smooth cycle of length~$l\geq 1$ in~$C_{l+1}$.
Then there exists an iteration~$\rho$ of this cycle that becomes singular
in~$C_{\rho l + 1} \cap H_{\rho l + 1}$.
\end{cor}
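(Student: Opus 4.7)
The plan is to apply proposition~\ref{multiple} to the $\rho$-fold iterate of the cycle. For $\rho\geq 1$, set
$$\mathbf{P}^{(\rho)} \;:=\; (P_1,P_2,\dots,P_l,\,P_1,P_2,\dots,P_l,\,\dots,\,P_1)\;\in\;\Cun^{\rho l+1},$$
the point obtained by traversing the cycle $\rho$ times and closing with one final $P_1$. Each consecutive pair is in $\Cdeux$ (indices cyclic modulo $l$, with $P_{l+1}:=P_1$), so $\mathbf{P}^{(\rho)}\in C_{\rho l+1}$; its first and last coordinates coincide, so $\mathbf{P}^{(\rho)}\in H_{\rho l+1}$. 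The periodicity of the sequence yields the key factorisation
$$\prod_{i=1}^{\rho l}\pi'_k(P_i,P_{i+1}) \;=\; A_k^{\rho},\qquad A_k\;:=\;\prod_{i=1}^l \pi'_k(P_i,P_{i+1}),\quad k=1,2.$$

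Next I would verify that iteration preserves the smoothness of $\mathbf{P}^{(\rho)}$ in $C_{\rho l+1}$, so that the criterion of proposition~\ref{multiple} applies. By proposition~\ref{singular}, $\mathbf{P}^{(\rho)}$ is singular in $C_{\rho l+1}$ iff there exist positions $1\leq i\leq j\leq \rho l$ with $\pi_2$ non-étale at $(P_i,P_{i+1})$ and $\pi_1$ non-étale at $(P_j,P_{j+1})$. Smoothness of the original cycle already forbids such a pair inside one period; however, if both a non-étale-$\pi_2$ edge and a non-étale-$\pi_1$ edge occur in the original cycle, then iterating with $\rho\geq 2$ juxtaposes them across two consecutive periods, and the resulting $\mathbf{P}^{(\rho)}$ is singular already in $C_{\rho l+1}$, hence \emph{a fortiori} in $C_{\rho l+1}\cap H_{\rho l+1}$, which settles the corollary with this $\rho$. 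Otherwise $\mathbf{P}^{(\rho)}$ remains smooth in $C_{\rho l+1}$ for every $\rho\geq 1$, and in the principal subcase of this alternative both products $A_1$ and $A_2$ are nonzero in the residue field $\kappa := \FF_q(P_1,\dots,P_l)$, which is a finite extension of $\FF_q$.

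Under this smoothness, proposition~\ref{multiple} transforms the desired singularity of $\mathbf{P}^{(\rho)}$ in $C_{\rho l+1}\cap H_{\rho l+1}$ into the single equation
$$(-1)^{r\rho l}\,A_2^{\rho}\;=\;A_1^{\rho}\qquad\text{in }\kappa,$$
equivalently $\delta^{\rho}=1$ for $\delta:=(-1)^{rl}A_2A_1^{-1}\in\kappa^{*}$. Since $\kappa$ is finite, its multiplicative group is cyclic of finite order, so $\delta$ has some finite multiplicative order $\rho_0$; any positive multiple $\rho=k\rho_0$ (e.g.\ $\rho=\rho_0$) then realises $\delta^{\rho}=1$ and delivers the required iteration. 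The main obstacle is the combinatorial analysis linking the iteration of edge positions to the smoothness characterisation of proposition~\ref{singular}; once that dichotomy is set up, the arithmetic conclusion is a one-line consequence of the finiteness of $\kappa^{*}$.
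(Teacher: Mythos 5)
Your arithmetic core matches the paper's own proof, which simply takes $\rho=\#\FF_q(P_1,\ldots,P_l)-1$: if every $\pi'_k(P_i,P_{i+1})$ is nonzero then both $\rho$-th power products equal $1$, and the sign $(-1)^{r\rho l}$ is $1$ because $\rho$ is even in odd characteristic while $-1=1$ in characteristic $2$. You also correctly observe that proposition~\ref{multiple} is applicable only at a \emph{smooth} point of $C_{\rho l+1}$, a hypothesis the paper's one-line proof tacitly assumes; your attempted dichotomy is the genuinely added content. Two of its steps, however, do not hold up.

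First, the claim that a point singular in $C_{\rho l+1}$ is \emph{a fortiori} singular (multiplicity $\geq 2$) in $C_{\rho l+1}\cap H_{\rho l+1}$ is true when $r=1$ (a dimension count gives $\dim(T_PC_n\cap T_PH_n)\geq 2+(n-1)-n=1>0$ at a singular $P$), but for $r\geq 2$ the count yields only $\geq 2-r\leq 0$ and a singular point of a space curve can meet a codimension-$r$ linear space transversally, hence with multiplicity $1$; an extra argument specific to the shape of $H_n$ would be required. Second, ``the principal subcase'' quietly omits the case in which exactly one of $A_1,A_2$ vanishes. There the iterate stays smooth for every $\rho$ (only one type of non-\'etale edge appears, so proposition~\ref{singular} never triggers), the determinant equation of proposition~\ref{multiple} equates zero with a nonzero element of $\kappa$, and the conclusion of the corollary in fact \emph{fails}: for instance, the self-loop at $1$ in figure~\ref{graphe_degre_2} is a smooth length-$1$ cycle with $f'(1)=0$ and $g'(1)\neq 0$, and none of its iterates is singular in the intersection. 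This is an imprecision of the corollary's statement itself, also present in the paper's own terse proof; it is harmless there because the corollary is only ever invoked for cycles inside a finite $d$-regular strongly connected component, where every edge is \'etale by both projections and hence all $\pi'_1,\pi'_2$ are nonzero. Your write-up should impose that hypothesis explicitly rather than hide it in ``the principal subcase''.
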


\begin{proof}
Applying the preceding proposition, we know that the $\rho$-th iterate cycle
is singular if and only if
$$
(-1)^{r\rho l} \prod_{i=1}^{l} \pi'_2(P_i,P_{i+1})^\rho
=
\prod_{i=1}^{l} \pi'_1(P_i,P_{i+1})^\rho.
$$
Hence~$\rho$ equal to~$\#\FF_q(P_1,\ldots,P_l) -1 $ works.
\end{proof}

\section{Graphs and recursive towers}\label{s_graph}

\subsection{Some basic definitions and properties in graphs theory}
\label{s_basic_results_graphs}

We list all the definitions and properties we use from graph theory
in this section and the next one.
We  refer to Godsil \& Royle's GTM \cite{AGT} for proofs and more
details.

\smallbreak

{\bfseries Connectedness.~---}
An undirected graph is said {\em connected} if there exists a path from each vertex to every other vertex.
A {\em connected component}
of a graph is a maximal connected subgraph.

A directed graph is said to be {\em weakly connected} if it becomes connected by forgetting orientation.
A {\em weakly connected component} is a maximal weakly connected subgraph.
A directed graph
is said to be {\em strongly connected}
if there is a path from each vertex to every other vertex. A
{\em strongly connected component} of a graph is a maximal strongly
connected subgraph. Such a component is said to be {\em primitive}
if there is a path {\em of common length} between every couple of
vertices (see loc. cit. \S2.6).

\smallbreak

{\bfseries Regularness.~---} Let~$\Graphe$ be a directed graph and~$P$
one of its vertices. The {\em out degree}~$d^+(P)$
and {\em in degree}~$d^-(P)$ at~$P$ is the number of vertices~$Q$ such
that there exists a path from~$P$ to~$Q$ and from~$Q$ to~$P$.
A graph is said $d$-regular if and only if the in and out degrees at any
vertices is equal to~$d$.
For a finite {\em $d$-regular} directed graph, being weakly connected is
equivalent to being strongly connected
(see loc. cit. Lemma~2.6.1). As a consequence, every strongly connected
component of a finite {\em $d$-regular} directed graph is still $d$-regular
(indeed: any weakly connected component of such a graph must be $d$-regular
and weakly connected and then strongly connected).

\smallbreak

{\bfseries Adjacency matrix.~---} To each finite directed graph~$\Graphe$,
one can associate its {\em adjacency matrix}~$A$.
One can easily verify that
\begin{equation} \label{norme}
\#\{\text{cycles of length~$n$}\} = \tr(A^n)
\qquad\text{and}\qquad
\#\{\text{paths of length~$n$}\}
=
\vertiii{A^n}
\end{equation}
where~$\vertiii{(a_{i,j})_{i,j}}
= \sum_{i,j} |a_{i,j}|$.
Moreover, most of the previous properties
can be read off this matrix.
The graph is {\em $d$-regular} if and only if the sums of the
coefficients of every lines and of every columns equal~$d$.
It is {\em strongly connected} (resp. {\em primitive})
if and only if~$A$ is {\em irreducible} (resp. {\em primitive}).

\smallbreak

{\bfseries Spectral theory of non negative matrices.~---} The adjacency
matrix of a graph is of course a non negative matrix. Such matrices
have a well known spectral theory (see~\cite[Chapter~8]{MatrixAnalysis}).
One of the most important result in this area is the Perron-Frobenius theorem
(see loc. cit. Theorem~8.8.1). We will use it several times in the sequel.

\subsection{The geometric graph and its arithmetic and singular subgraphs}

To a recursive tower ${\mathcal T}(\Cun, \Cdeux)$ given by an irreducible correspondence~$\Cdeux$
on~$\Cun\times\Cun$, we associate in a very natural way an incidence ``geometric" infinite graph whose vertices are the geometric
points of~$\Cun$ and whose edges depend on~$\Cdeux$. Some of its ``arithmetic" finite subgraphs will play a crucial role till the end of this paper and in the proof of theorem \ref{betar}.

\begin{defi} \label{vocabulaire}
Let~$(\Cun,\Cdeux)$ be a correspondence as in section~\ref{s_recursive_tower}.
\begin{enumerate}
\item The {\bfseries geometric graph}
$\Graphe_{\infty}(\Cun, \Cdeux) = \Graphe_{\infty}$
is the graph whose vertices are the geometric points
of~$\Cun$, and for which there is an oriented edge
from $P \in \Cun(\overline{\FF_q})$ to $Q \in \Cun(\overline{\FF_q})$
if $(P, Q) \in \Cdeux(\overline{\FF_q})$.
\item An oriented edge $P \rightarrow Q$ of the graph is said to be \'etale by $\pi_1$ if the morphism $\pi_1$ is \'etale at $(P, Q)$. In the same way, the edge $P \rightarrow Q$ is said to be \'etale by $\pi_2$ if the morphism $\pi_2$ is \'etale at $(P, Q)$.
The {\bfseries singular part} of the graph~$\Graphe_\infty$,
denoted by~$\Gsing$, is the union of all weakly connected
components
containing at least one edge which is not \'etale by~$\pi_1$ or by~$\pi_2$.
\item For any subset~$S \subset \Cun(\overline{\FF_q})$, the
graph~$\Graphe_S$ is the subgraph of~$\Graphe_{\infty}$, whose vertices are
the points of $S$ and where there is an oriented edge from $P\in S$ to $Q \in S$ if there is one in $\Graphe_{\infty}$, that is if $(P, Q) \in \Cdeux(\overline{\FF_q})$.
\item In particular for~$S = \Cun(\FF_{q^r})$, $1 \leq r < +\infty$, we denote
by~$\Graphe_r$ the subgraph~$\Graphe_{\Cun(\FF_{q^r})}$ and we call it
the {\bfseries $r$-th arithmetic graph}.
\end{enumerate}
\end{defi}

This graph is a convenient way to ``see'' some of the most important features of a recursive tower:
 
$\bullet$ The
geometric points of~$C_n$ are in bijection with the paths of length $n-1$
of~$\Graphe_\infty$
(that is $n$ vertices and $n-1$ edges) while the arithmetic points defined
over~$\FF_{q^r}$ are in bijection with the paths of length $n-1$ of~$\Graphe_r$.

$\bullet$ The non \'etale points $(P, Q) \in \Cdeux$ can be read off the
in and out degrees of the graph~$\Graphe_\infty$.
Indeed, for every vertex~$P\in\Cun(\overline{\FF_q})$, the out-degree $d^+(P)$
(resp. in-degree $d^-(P)$)
at~$P$ of the graph~$\Graphe_\infty$ is equal to~$d$
{\it except} if there exists at least one point~$(P,Q)\in\Cdeux$
(resp.~$(Q,P)\in\Cdeux$) above~$P$
which is not \'etale by~$\pi_1$ (resp.~$\pi_2$), in which case this out (resp. in) degree is $<d$.

$\bullet$ The complementary part of the singular part
of $\Graphe_\infty$ is a $d$-regular graph in the graph theoretic sense, which means that at every vertex the
out and the in degrees are equal to~$d$.

\medbreak

One can even  be more precise.

\begin{prop}\label{lisse&totdec}
Let~$(\Cun,\Cdeux)$ be a correspondence as in section~\ref{s_recursive_tower}.
\begin{enumerate}
\item A path of length~$(n-1)$ in~$\Graphe_\infty$ corresponds to a singular
point of~$C_n$ if and only if there exist~$1\leq i\leq j<n$ such that
the edge~$P_{i}\to P_{i+1}$ is not \'etale by~$\pi_2$ and the
edge~$P_j\to P_{j+1}$ is not \'etale by~$\pi_1$. In particular,
this path is contained in the singular part~$\Gsing$ of~$\Graphe_\infty$.
\item Every path of length~$(n-1)$ outside the singular part
corresponds to a smooth point of~$C_n$.
\end{enumerate}
\end{prop}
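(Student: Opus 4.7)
The plan is to read Proposition~\ref{lisse&totdec} as a direct translation of Proposition~\ref{singular} through the bijection between paths in $\Graphe_\infty$ and geometric points of $C_n$, and then to extract the statement about $\Gsing$ from the definition of the singular part.

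First I would spell out the dictionary: by the very construction of $\Graphe_\infty$ in Definition~\ref{vocabulaire}, a path of length~$n-1$ is the datum of $n$ geometric points $P_1,\ldots,P_n$ of~$\Cun$ linked by edges $P_i\to P_{i+1}$, which means $(P_i,P_{i+1})\in\Cdeux(\overline{\FF_q})$ for each~$i$. By (\ref{eq_def_C_n}), this is exactly a geometric point of $C_n$. Under this bijection, the edge $P_i\to P_{i+1}$ being \'etale by~$\pi_j$ (in the graph-theoretic sense of Definition~\ref{vocabulaire}) is \emph{by definition} the \'etaleness of~$\pi_j$ at the point $(P_i,P_{i+1})\in\Cdeux$ that appears in Proposition~\ref{singular}. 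The first equivalence in item~(i) is then nothing but Proposition~\ref{singular} rewritten through this dictionary; no new computation is needed.

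For the ``in particular'' part of item~(i), I would argue as follows. Suppose the path $P_1\to\cdots\to P_n$ corresponds to a singular point of $C_n$. By the characterization just obtained, it contains at least one edge which is not \'etale by~$\pi_1$ and at least one edge which is not \'etale by~$\pi_2$. In particular, the path itself contains an edge that fails to be \'etale by some~$\pi_j$. Since the vertices and edges of the path all lie in a single weakly connected component of~$\Graphe_\infty$, that component contains such an edge, and is therefore included in~$\Gsing$ by Definition~\ref{vocabulaire}. Hence the entire path is contained in~$\Gsing$.

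Item~(ii) is then the contrapositive of the ``in particular'' assertion in item~(i): if a path of length~$n-1$ lies outside~$\Gsing$, then by item~(i) it cannot represent a singular point of~$C_n$, so it must represent a smooth one. I do not foresee any real obstacle: the whole proposition is a graph-theoretic repackaging of Proposition~\ref{singular}, and the only point requiring a tiny bit of care is to match the two usages of ``\'etale by $\pi_j$'' -- on an edge of the graph on one side, on a point of~$\Cdeux$ on the other -- which is done by Definition~\ref{vocabulaire}~(ii).
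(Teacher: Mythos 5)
Your proof is correct and follows the same route as the paper: the paper simply says item~(i) is ``only a translation of Proposition~\ref{singular}'' and item~(ii) ``follows trivially,'' which is exactly the dictionary you spell out between edges of~$\Graphe_\infty$, points of~$\Cdeux$, and the definition of~$\Gsing$. You have merely made explicit the translation that the paper leaves implicit.
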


\begin{preuve}
The first item is only a translation of proposition \ref{singular} whereas the second one follows trivially.
\end{preuve}

For instance, we represent in figure~\ref{graphe_degre_2} the second
arithmetic graph~$\Graphe_2$ for the very nice tame tower
of~\cite{GS_Degre2Moderee} recursively defined by $y^2=\frac{x^2+1}{2x}$
over~$\FF_5$. The singular part~$\Gsing$ is the subgraph whose vertices
are the points of~$\PP^1(\FF_5) = \{0, \pm 1,\pm 2,\infty\}$.

\begin{figure}
\centering
\begin{tikzpicture}[line width = 1pt]
\fill (0,0) circle (3pt) node[left] {$\alpha^7$} ;
\fill (2,1) circle (3pt) node[above left] {$\alpha^{19}$} ;
\fill (2,-1) circle (3pt) node[below left] {$\alpha^{17}$} ;
\fill (4,0) circle (3pt) node[left] {$\alpha$} ;
\fill (6,0) circle (3pt) node[right] {$\alpha^5$} ;
\fill (8,1) circle (3pt) node[above right] {$\alpha^{13}$} ;
\fill (8,-1) circle (3pt) node[below right] {$\alpha^{23}$} ;
\fill (10,0) circle (3pt) node[right] {$\alpha^{11}$} ;
\draw[decoration={markings, mark=at position 0.625 with {\arrow{>}}},
      postaction={decorate}]
     (-0.5,0) circle (0.5cm) ;
\draw[decoration={markings, mark=at position 0.5 with {\arrow{>}}},
      postaction={decorate}] (0,0) to [bend left] (2,1) ;
\draw[decoration={markings, mark=at position 0.5 with {\arrow{>}}},
      postaction={decorate}] (2,-1) to [bend left] (0,0) ;
\draw[decoration={markings, mark=at position 0.5 with {\arrow{>}}},
      postaction={decorate}] (2,-1) to [bend left] (2,1) ;
\draw[decoration={markings, mark=at position 0.5 with {\arrow{>}}},
      postaction={decorate}] (2,1) to [bend left] (4,0) ;
\draw[decoration={markings, mark=at position 0.5 with {\arrow{>}}},
      postaction={decorate}] (4,0) to [bend left] (2,-1) ;
\draw[decoration={markings, mark=at position 0.5 with {\arrow{>}}},
      postaction={decorate}] (4,0) to [bend left] (6,0) ;
\draw[decoration={markings, mark=at position 0.5 with {\arrow{>}}},
      postaction={decorate}] (6,0) to [bend left] (4,0) ;
\draw[decoration={markings, mark=at position 0.5 with {\arrow{>}}},
      postaction={decorate}] (6,0) to [bend left] (8,1) ;
\draw[decoration={markings, mark=at position 0.5 with {\arrow{>}}},
      postaction={decorate}] (8,-1) to [bend left] (6,0) ;
\draw[decoration={markings, mark=at position 0.5 with {\arrow{>}}},
      postaction={decorate}] (8,1) to [bend left] (10,0) ;
\draw[decoration={markings, mark=at position 0.5 with {\arrow{>}}},
      postaction={decorate}] (10,0) to [bend left] (8,-1) ;
\draw[decoration={markings, mark=at position 0.5 with {\arrow{>}}},
      postaction={decorate}] (8,1) to [bend left] (8,-1) ;
\draw[decoration={markings, mark=at position 0.5 with {\arrow{>}}},
      postaction={decorate}] (2,1) to [bend left] (8,1) ;
\draw[decoration={markings, mark=at position 0.5 with {\arrow{>}}},
      postaction={decorate}] (8,-1) to [bend left] (2,-1) ;
\draw[decoration={markings, mark=at position 0.2 with {\arrow{>}}},
      postaction={decorate}]
     (10.5,0) circle (0.5cm) ;
\end{tikzpicture}

\begin{tikzpicture}[line width = 1pt]
\fill (0,0) circle (3pt) node[left] {$1$} ;
\fill (2,0) circle (3pt) node[above left] {$-1$} ;
\fill (4,1) circle (3pt) node[above] {$2$} ;
\fill (4,-1) circle (3pt) node[below] {$-2$} ;
\fill (6,0) circle (3pt) node[above right] {$0$} ;
\fill (8,0) circle (3pt) node[right] {$\infty$} ;
\draw[decoration={markings, mark=at position 0.625 with {\arrow{>}}},
      postaction={decorate}]
     (-0.5,0) circle (0.5cm) ;
\draw[decoration={markings, mark=at position 0.5 with {\arrow{>}}},
      postaction={decorate}] (0,0) to (2,0) ;
\draw[decoration={markings, mark=at position 0.5 with {\arrow{>}}},
      postaction={decorate}] (2,0) to [bend left] (4,1) ;
\draw[decoration={markings, mark=at position 0.5 with {\arrow{>}}},
      postaction={decorate}] (2,0) to [bend right] (4,-1) ;
\draw[decoration={markings, mark=at position 0.5 with {\arrow{>}}},
      postaction={decorate}] (4,1) to [bend left] (6,0) ;
\draw[decoration={markings, mark=at position 0.5 with {\arrow{>}}},
      postaction={decorate}] (4,-1) to [bend right] (6,0) ;
\draw[decoration={markings, mark=at position 0.5 with {\arrow{>}}},
      postaction={decorate}] (6,0) to (8,0) ;
\draw[decoration={markings, mark=at position 0.2 with {\arrow{>}}},
      postaction={decorate}]
     (8.5,0) circle (0.5cm) ;
\end{tikzpicture}

\begin{tikzpicture}[line width = 1pt]
\fill (0,0) circle (3pt) node[left] {$\alpha^4$} ;
\fill (2,1) circle (3pt) node[above] {$\alpha^9$} ;
\fill (2,-1) circle (3pt) node[below] {$\alpha^{21}$} ;
\fill (4,0) circle (3pt) node[right] {$\alpha^{20}$} ;
\draw[decoration={markings, mark=at position 0.5 with {\arrow{>}}},
      postaction={decorate}] (0,0) to [bend left] (2,1) ;
\draw[decoration={markings, mark=at position 0.5 with {\arrow{>}}},
      postaction={decorate}] (4,0) to [bend right] (2,1) ;
\draw[decoration={markings, mark=at position 0.5 with {\arrow{>}}},
      postaction={decorate}] (0,0) to [bend right] (2,-1) ;
\draw[decoration={markings, mark=at position 0.5 with {\arrow{>}}},
      postaction={decorate}] (4,0) to [bend left] (2,-1) ;
\end{tikzpicture}
\begin{tikzpicture}[line width = 1pt]
\fill (0,0) circle (3pt) node[left] {$\alpha^8$} ;
\fill (2,1) circle (3pt) node[above] {$\alpha^3$} ;
\fill (2,-1) circle (3pt) node[below] {$\alpha^{15}$} ;
\fill (4,0) circle (3pt) node[right] {$\alpha^{16}$} ;
\draw[decoration={markings, mark=at position 0.5 with {\arrow{>}}},
      postaction={decorate}] (0,0) to [bend left] (2,1) ;
\draw[decoration={markings, mark=at position 0.5 with {\arrow{>}}},
      postaction={decorate}] (4,0) to [bend right] (2,1) ;
\draw[decoration={markings, mark=at position 0.5 with {\arrow{>}}},
      postaction={decorate}] (0,0) to [bend right] (2,-1) ;
\draw[decoration={markings, mark=at position 0.5 with {\arrow{>}}},
      postaction={decorate}] (4,0) to [bend left] (2,-1) ;
\fill (6,0.5) circle (3pt) node[above] {$\alpha^2$} ;
\fill (8,0.5) circle (3pt) node[above] {$\alpha^{10}$} ;
\fill (6,-0.5) circle (3pt) node[below] {$\alpha^{14}$} ;
\fill (8,-0.5) circle (3pt) node[below] {$\alpha^{22}$} ;
\end{tikzpicture}
\caption{The second arithmetic graph~$\Graphe_2\left(\PP^1, \frac{x^2+1}{2x} = y^2\right)$ over~$\FF_{25}$}\label{graphe_degre_2}
\end{figure}

\subsection{Finite complete sets and rational points}

We define as Beelen (see \cite{Beelen}) the notions of complete, backward complete and forward complete sets.

\begin{defi}
A subset~$S$ of~$\Cun(\overline{\FF_q})$ is said to be:
\begin{enumerate}
\item {\bfseries forward complete} if every point of~$S$ has
all its outgoing neighbors in~$\Graphe_\infty$ inside~$S$, that
is if $\pi_2(\pi_1^{-1}(S)) \subset S$;
\item  {\bfseries backward complete} if every point of~$S$ has
all its incoming neighbors in~$\Graphe_\infty$ inside~$S$, that
is if $\pi_1(\pi_2^{-1}(S)) \subset S$;
\item {\bfseries complete} if it is both backward and forward complete.
\end{enumerate}
\end{defi}

\begin{rk}
Being complete for a subset~$S\subset\Cun(\overline{\FF_q})$
does {\bfseries NOT} mean that
the graph~$\Graphe_S$ is complete in the usual sense of graph theory.
\end{rk}

If~$S$ is complete and if the subgraph $\Graphe_S$ is outside
the singular part~$\Graphe_{sing}$ then $\Graphe_S$ is $d$-regular in
standard graph theory. The following examples will illustrate this.

In the example of figure~\ref{graphe_degre_2}, the
sets~$\{\alpha,\alpha^{5},\alpha^{7},\alpha^{11},\alpha^{13},\alpha^{17},\alpha^{19},\alpha^{23}\}$ and~$\{0,\pm 1,\pm 2,\infty\}$ are complete
while the set~$\{\alpha^{4},\alpha^{9},\alpha^{20},\alpha^{21}\}$ is neither
forward nor backward complete. The set $\{2, 0, \infty\}$ is forward complete, but not backward complete.
 Moreover, the fact that, for instance, $\alpha^3$ possesses no outgoing edge means that there is no point in~$C_2(\FF_{25})$ above the
point~$\alpha^3 \in C_1(\FF_{25})$; this also means that
there is no points in~$C_3(\FF_{25})$
above the point~$(\alpha^{16}, \alpha^3) \in C_2(\FF_{25})$.
In other terms, this point is inert in the field
extension~$\FF_{25}(C_3)/\FF_{25}(C_2)$.

\begin{lem} \label{forward_complete_complete}
Let~$S$ be a finite and backward complete subset of~$\Cun(\overline{\FF_q})$ such that the graph~$\Graphe_S$ is outside the singular part. Then~$S$ is complete.
\end{lem}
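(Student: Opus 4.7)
\medskip

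\noindent\textbf{Proof plan.} The plan is a short edge-counting argument: backward completeness of $S$ pins down the number of edges of $\Graphe_S$, and the avoidance of $\Gsing$ forces this number to match the number of edges of $\Graphe_\infty$ going out of $S$.

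I first check that $d^+(P) = d^-(P) = d$ in $\Graphe_\infty$ for every $P \in S$. Any edge of $\Graphe_\infty$ incident to $P$ (as source or as target) belongs to the same weakly connected component of $\Graphe_\infty$ as $P$. Since $\Gsing$ is by definition a union of weakly connected components of $\Graphe_\infty$, and $P$ is a vertex of $\Graphe_S$ which lies outside $\Gsing$, the vertex $P$ is itself outside $\Gsing$; hence no edge incident to $P$ in $\Graphe_\infty$ can be non-étale by $\pi_1$ or by $\pi_2$. The observation following Definition~\ref{vocabulaire} then yields both degree equalities.

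Summing these degrees over $P \in S$, the number of edges of $\Graphe_\infty$ with target in $S$ equals $d \cdot \#S$, and the same holds for edges with source in $S$. Backward completeness $\pi_1(\pi_2^{-1}(S)) \subset S$ says that every edge with target in $S$ already has its source in $S$; the first count is thus exactly the number of edges of $\Graphe_S$. The set of edges of $\Graphe_\infty$ with source in $S$ trivially contains the edges of $\Graphe_S$, and since the two sets are finite and have the same cardinality $d \cdot \#S$, they coincide. Unwinding, every edge of $\Graphe_\infty$ whose source lies in $S$ has its target in $S$, that is, $\pi_2(\pi_1^{-1}(S)) \subset S$; so $S$ is forward complete, hence complete.

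The only genuinely delicate step, which I would write out carefully, is the first one: the equality $d^+(P) = d$ requires that \emph{all} edges of $\Graphe_\infty$ out of $P$ be étale by $\pi_1$, not merely those whose target happens to lie in $S$. This is precisely why $\Gsing$ is defined as a union of weakly connected components of $\Graphe_\infty$; once this global control is in hand, the remainder is purely bookkeeping and uses essentially the finiteness of $S$ to turn an inclusion of equicardinal finite sets into an equality.
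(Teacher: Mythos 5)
Your proof is correct and takes essentially the same edge-counting approach as the paper: both arguments use backward completeness together with the fact that $\Graphe_S$ lies outside~$\Gsing$ to pin the in-degrees at~$d$, then compare the two ways of counting edges to force all out-degrees to equal~$d$ as well. Your version is slightly more explicit in the concluding step, deducing forward completeness from the coincidence of two equicardinal finite edge sets rather than reading it directly off the out-degree identity, which is a cleaner bookkeeping of the same idea.
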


\begin{preuve}
Since~$\Graphe_S$ is outside the singular
part and $S$ is backward complete, the in-degree at every vertex $P \in S$ is equal to~$d^{-}(P)=d$.
On the other side, the out-degree~$d^{+}(P)$ at each vertex~$P$ is less than~$d$.
Counting the edges, we get
$$
d\#S = \sum_{P\in S} d^-(P) = \sum_{P\in S} d^+(P) \leq d\#S,
$$
so that one must have~$d^+(P) = d$ for every~$P\in S$, which means that $S$ is also forward complete and thus complete.
\end{preuve}

\begin{prop}\label{partietotdec}
Let~$(\Cun,\Cdeux)$ be a correspondence as in section~\ref{s_recursive_tower}.
If there exists a finite complete set~$S \subset X({\mathbb F}_{q^r})$ such that
the graph~$\Graphe_S$ is outside the singular
part~$\Gsing$,
then
$$
\# \widetilde{C}_n(\FF_{q^r}) \geq \# S \times d^{n-1}.
$$
\end{prop}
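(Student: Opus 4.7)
The plan is to count paths of length $n-1$ inside $\Graphe_S$ and show that each such path corresponds to a distinct $\FF_{q^r}$-rational point of $\widetilde{C}_n$. There are three steps.

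First I would show that $\Graphe_S$ is $d$-regular in the usual graph-theoretic sense. By hypothesis, $\Graphe_S$ lies outside~$\Gsing$, so for every vertex $P\in S$ all the $d$ outgoing edges of $P$ in~$\Graphe_\infty$ are \'etale by~$\pi_1$ and all the $d$ incoming edges are \'etale by~$\pi_2$. Since these incident edges have both endpoints rational over $\FF_{q^r}$ (using that $S\subset\Cun(\FF_{q^r})$ and that $S$ is both forward and backward complete), they all lie in $\Graphe_S$. Hence $d^+(P)=d^-(P)=d$ for every $P\in S$.

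Second, I would count the paths of length~$n-1$ in~$\Graphe_S$. Starting from an arbitrary vertex $P_1\in S$ and using $d$-regularity, at each of the $n-1$ successive steps there are exactly $d$ choices, yielding
$$
\#\{\text{paths of length $n-1$ in }\Graphe_S\} = \#S \times d^{n-1}.
$$
By the very definition of~$C_n$ and of the arithmetic graph, each such path $(P_1,\ldots,P_n)$ is a point of $C_n(\FF_{q^r})$.

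Finally, since every edge of $\Graphe_S$ is \'etale by both $\pi_1$ and $\pi_2$, Proposition~\ref{lisse&totdec}~(ii) ensures that every such point is a \emph{smooth} point of~$C_n$. The desingularization morphism $\nu_n : \widetilde{C}_n \to C_n$ is an isomorphism above the smooth locus, so each of these $\#S\times d^{n-1}$ points of $C_n(\FF_{q^r})$ lifts to a unique $\FF_{q^r}$-rational point of $\widetilde{C}_n$, yielding the inequality
$$
\#\widetilde{C}_n(\FF_{q^r}) \geq \#S \times d^{n-1}.
$$
There is no serious obstacle: the only point worth care is that the $d$-regularity of $\Graphe_S$ really holds inside $S$ (not just in~$\Graphe_\infty$), which is guaranteed by completeness of~$S$ combined with the fact that all neighbors of a point $P\in S\subset\Cun(\FF_{q^r})$ reached by an \'etale edge are automatically defined over~$\FF_{q^r}$.
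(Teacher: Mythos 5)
Your proof is correct and follows essentially the same route as the paper: establish that $\Graphe_S$ is $d$-regular (the paper states this just before the proposition), count the $\#S\,d^{n-1}$ paths of length $n-1$ in $\Graphe_S$ as $\FF_{q^r}$-points of $C_n$, observe via Proposition~\ref{lisse\&totdec} that they are smooth, and lift them through the normalization. The paper phrases the count as an induction (each path of length $n$ extends in exactly $d$ ways), but this is only a cosmetic difference from your direct count.
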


\begin{preuve}
Since~$S$ is complete and since the graph~$\Graphe_S$ is assumed to be
outside the singular part, the graph~$\Graphe_S$
must be $d$-regular. Each path of length~$n$ in~$\Graphe_S$ gives rise to
exactly $d$ paths of length~$n+1$ by adding one of the $d$ 
outgoing neighbors of the ending vertex. All these paths correspond
to smooth points of~$C_n$ or~$C_{n+1}$ and we have just proved that above each
such point of~$C_n$, there are exactly $d$ points on~$C_{n+1}$. 
We easily conclude by induction since~$C_1$ counts at least $\#S$ points
defined over~$\FF_{q^r}$.
\end{preuve}

\subsection{Illustration with the BGS tower} \label{s_computation_genus}

In~$1985$, Bezerra, Garcia and Stichtenoth \cite{Ap3-3}
have introduced what we refer to as the BGS recursive
tower~${\mathcal T}(\Cun,\Cdeux)$ over~$\FF_q$,
defined by~$\Cun = \PP^1$ and by the separated variables
correspondence~$\Cdeux_{f,g}$ (notations of section~\ref{s_recursive_tower}) with
\begin{equation}\label{BGS-equation}
f(x) = \frac{x^q+x-1}{x}
\qquad\text{and}\qquad
g(y) = \frac{1-y}{y^q}.
\end{equation}
For each~$n\geq 1$, the curve~$C_n$ is embedded
in~$(\PP^1)^n = \prod_{i=1}^n\Proj(\FF_q[x_i,y_i])$ and is defined by the
ideal
$$
\left\langle
x_{i+1}^q\left(x_i^q+x_iy_i^{q-1}-y_i^q\right)
-
\left(y_{i+1}^q - x_{i+1}y_{i+1}^{q-1}\right)x_iy_i^{q-1},\, 1\leq i\leq n-1
\right\rangle.
$$
In this section, we would like to illustrate our approach about recursive
towers taking the BGS tower as example. We do not prove anything and refer
to the original Crelle's article \cite{Ap3-3} for the proofs.

\begin{figure}
\begin{tikzpicture}[line width = 1pt,scale=0.9,baseline=(current bounding box.west)]
\fill (0,0) circle (3pt) node[left] {$\alpha^{23}$} ;
\fill (2,1) circle (3pt) node[above left,xshift=-0.1cm] {$\alpha$} ;
\fill (2,-1) circle (3pt) node[below left] {$\alpha^{22}$} ;
\fill (2,2) circle (3pt) node[above left] {$\alpha^{18}$} ;
\fill (2,-2) circle (3pt) node[below left] {$\alpha^{2}$} ;
\fill (4,2) circle (3pt) node[above right,yshift=0.1cm] {$\alpha^{16}$} ;
\fill (4,-2) circle (3pt) node[above right,yshift=0.1cm] {$\alpha^3$} ;
\fill (6,2) circle (3pt) node[above right] {$\alpha^{9}$} ;
\fill (6,-2) circle (3pt) node[below right] {$\alpha^{14}$} ;
\fill (5,4) circle (3pt) node[above,yshift=0.2cm] {$\alpha^{25}$} ;
\fill (5,-4) circle (3pt) node[below,yshift=-0.2cm] {$\alpha^{17}$} ;
\fill (8,0) circle (3pt) node[right] {$\alpha^{6}$} ;
\draw[decoration={markings, mark=at position 0.625 with {\arrow{>}}},
      postaction={decorate}]
     (-0.5,0) circle (0.5cm) ;
\draw[decoration={markings, mark=at position 0.5 with {\arrow{>}}},
      postaction={decorate}] (0,0) to [bend left] (2,2) ;
\draw[decoration={markings, mark=at position 0.5 with {\arrow{>}}},
      postaction={decorate}] (0,0) to [bend right] (2,-1) ;

\draw[decoration={markings, mark=at position 0.5 with {\arrow{>}}},
      postaction={decorate}] (2,2) to [bend left] (4,2) ;
\draw[decoration={markings, mark=at position 0.5 with {\arrow{>}}},
      postaction={decorate}] (2,2) to [bend left] (5,4) ;
\draw[decoration={markings, mark=at position 0.6 with {\arrow{>}}},
      postaction={decorate}] (2,2) parabola bend (6,5.5) (8,0);

\draw[decoration={markings, mark=at position 0.5 with {\arrow{>}}},
      postaction={decorate}] (2,1) to [bend left] (2,-1) ;
\draw[decoration={markings, mark=at position 0.5 with {\arrow{>}}},
      postaction={decorate}] (2,1) to [bend left] (2,2) ;
\draw[decoration={markings, mark=at position 0.5 with {\arrow{>}}},
      postaction={decorate}] (2,1) to [bend right] (0,0) ;

\draw[decoration={markings, mark=at position 0.5 with {\arrow{>}}},
      postaction={decorate}] (4,2) to [bend left] (6,2) ;
\draw[decoration={markings, mark=at position 0.5 with {\arrow{>}}},
      postaction={decorate}] (4,2) to [bend left] (2,1) ;
\draw[decoration={markings, mark=at position 0.5 with {\arrow{>}}},
      postaction={decorate}] (4,2) to [bend right] (4,-2) ;

\draw[decoration={markings, mark=at position 0.625 with {\arrow{>}}},
      postaction={decorate}]
     (5,4.5) circle (0.5cm) ;
\draw[decoration={markings, mark=at position 0.5 with {\arrow{>}}},
      postaction={decorate}] (5,4) to [bend right] (4,2) ;
\draw[decoration={markings, mark=at position 0.5 with {\arrow{>}}},
      postaction={decorate}] (5,4) to [bend left] (8,0) ;

\draw[decoration={markings, mark=at position 0.5 with {\arrow{>}}},
      postaction={decorate}] (6,2) to [bend right] (5,4) ;
\draw[decoration={markings, mark=at position 0.5 with {\arrow{>}}},
      postaction={decorate}] (6,2) to [bend left] (4,2) ;
\draw[decoration={markings, mark=at position 0.5 with {\arrow{>}}},
      postaction={decorate}] (6,2) to [bend left] (8,0) ;

\draw[decoration={markings, mark=at position 0.6 with {\arrow{<}}},
      postaction={decorate}] (2,-2) parabola bend (6,-5.5) (8,0);

\draw[decoration={markings, mark=at position 0.5 with {\arrow{>}}},
      postaction={decorate}] (8,0) to [bend left] (5,-4) ;
\draw[decoration={markings, mark=at position 0.5 with {\arrow{>}}},
      postaction={decorate}] (8,0) to [bend left] (6,-2) ;

\draw[decoration={markings, mark=at position 0.5 with {\arrow{>}}},
      postaction={decorate}] (6,-2) to [bend right] (6,2) ;
\draw[decoration={markings, mark=at position 0.5 with {\arrow{>}}},
      postaction={decorate}] (6,-2) to [bend left] (4,-2) ;
\draw[decoration={markings, mark=at position 0.6 with {\arrow{>}}},
      postaction={decorate}] (6,-2) to [bend right] (2,1) ;

\draw[decoration={markings, mark=at position 0.625 with {\arrow{>}}},
      postaction={decorate}]
     (5,-4.5) circle (0.5cm) ;
\draw[decoration={markings, mark=at position 0.5 with {\arrow{>}}},
      postaction={decorate}] (5,-4) to [bend left] (2,-2) ;
\draw[decoration={markings, mark=at position 0.5 with {\arrow{>}}},
      postaction={decorate}] (5,-4) to [bend right] (6,-2) ;

\draw[decoration={markings, mark=at position 0.5 with {\arrow{>}}},
      postaction={decorate}] (4,-2) to [bend left] (2,-2) ;
\draw[decoration={markings, mark=at position 0.5 with {\arrow{>}}},
      postaction={decorate}] (4,-2) to [bend left] (6,-2) ;
\draw[decoration={markings, mark=at position 0.5 with {\arrow{>}}},
      postaction={decorate}] (4,-2) to [bend right] (5,-4) ;

\draw[decoration={markings, mark=at position 0.5 with {\arrow{>}}},
      postaction={decorate}] (2,-2) to [bend left] (0,0) ;
\draw[decoration={markings, mark=at position 0.5 with {\arrow{>}}},
      postaction={decorate}] (2,-2) to [bend left] (2,-1) ;
\draw[decoration={markings, mark=at position 0.5 with {\arrow{>}}},
      postaction={decorate}] (2,-2) to [bend right=60] (2,2) ;

\draw[decoration={markings, mark=at position 0.5 with {\arrow{>}}},
      postaction={decorate}] (2,-1) to [bend left] (2,1) ;
\draw[decoration={markings, mark=at position 0.5 with {\arrow{>}}},
      postaction={decorate}] (2,-1) to [bend right] (4,-2) ;
\draw[decoration={markings, mark=at position 0.6 with {\arrow{>}}},
      postaction={decorate}] (2,-1) to [bend right] (6,2) ;
\draw (-0.5,-4) node[anchor = west] {$\FF_{27} = \FF_3(\alpha)$} ;
\draw (-0.5,-4.7) node[anchor = west] {$\alpha^3 + 2\alpha + 1 = 0$} ;
\end{tikzpicture}
\begin{tikzpicture}[line width = 1pt,scale=0.9,baseline=(current bounding box.west)]
\fill (0,0) circle (3pt) node[left] {$1$} ;
\fill (2,2) circle (3pt) node[above] {$\theta_1$} ;
\fill (2,0) circle (3pt) node[below right] {$\theta_2$} ;
\fill (2,-2) circle (3pt) node[below] {$\theta_3$} ;
\fill (4,0) circle (3pt) node[below right] {$\infty$} ;
\fill (6,0) circle (3pt) node[below left] {$0$} ;
\draw[decoration={markings, mark=at position 0.5 with {\arrow{>}}},
      postaction={decorate}] (0,0) to [bend left] (2,2) ;
\draw[decoration={markings, mark=at position 0.5 with {\arrow{>}}},
      postaction={decorate}] (0,0) to [bend left=20] (2,0) ;
\draw[decoration={markings, mark=at position 0.5 with {\arrow{>}}},
      postaction={decorate}] (0,0) to [bend left=20] (2,-2) ;
\draw[decoration={markings, mark=at position 0.5 with {\arrow{>}}},
      postaction={decorate}] (2,2) to [bend left=20] (0,0) ;
\draw[decoration={markings, mark=at position 0.5 with {\arrow{>}}},
      postaction={decorate}] (2,0) to [bend left=20] (0,0) ;
\draw[decoration={markings, mark=at position 0.5 with {\arrow{>}}},
      postaction={decorate}] (2,-2) to [bend left] (0,0) ;

\draw[decoration={markings, mark=at position 0.5 with {\arrow{>}}},
      postaction={decorate}] (2,2) to [bend left] (4,0) ;
\draw[decoration={markings, mark=at position 0.5 with {\arrow{>}}},
      postaction={decorate}] (2,0) to (4,0) ;
\draw[decoration={markings, mark=at position 0.5 with {\arrow{>}}},
      postaction={decorate}] (2,-2) to [bend right] (4,0) ;

\draw[decoration={markings, mark=at position 0.5 with {\arrow{>}}},
      postaction={decorate}] (4,0) to (6,0) ;

\draw[decoration={markings, mark=at position 0.625 with {\arrow{>}}},
      postaction={decorate}]
     (6.5,0) circle (0.5cm) ;
\draw (5.5,-2) node {$\theta_i^3 + \theta_i - 1 = 0$} ;
\end{tikzpicture}
\caption{The two interesting components of~$\Graphe_6$ for~$q=3$}\label{figure_Ap3}
\end{figure}

\medbreak

{\bfseries\noindent The totally split points~---~} In
figure~\ref{figure_Ap3}, we represent two complete sets for $q=3$. The left hand one counts
$q(q+1)$ vertices, all contained in~$\FF_{q^3}$. For~$q=5,7...$, one  easily
sees evidence of the existence of a complete set of size~$q(q+1)$ outside the
singular part. By proposition~\ref{partietotdec}, if this is true, one should have
$$
\# \widetilde{C}_n(\FF_{q^3}) \geq q^{n}(q+1).
$$
Of course, this is not a proof of this fact, but only a convenient way
to see it. This is proved in Crelle \cite[Proposition~3.1]{Ap3-3}.

\medbreak

{\bfseries\noindent The singular points~---~} The right hand side complete set
of figure~\ref{figure_Ap3} has points~$0,1,\infty$ as vertices.
These are exactly the ramified points of~$f$ or~$g$: the ramified points
of~$f$ (resp.~$g$) are~$1$ and~$\infty$
(resp.~$\infty$ and~$0$). The set~$\{0,1,\infty\}$ is not complete but
one can easily prove that it suffices to add the
set~${\mathcal R}$ of roots
of~$x^q+x-1$ to complete the set. The
subgraph~$\Graphe_{\{0,1,\infty\} \cup {\mathcal R}}$ is nothing else that
the singular part~$\Gsing$.

The fact that every curve~$C_n$ is irreducible can be read of
this component. Indeed, there is only one loop starting from the vertex~$0$.
This means that above the point~$0 \in C_1$, there is only one point,
i.e.~$(0,\ldots,0) \in C_n$.
Since~$0$ is not a ramified point of~$f$, this point is
smooth in~$C_n$ and then must be totally ramified over~$0 \in C_1$.
Necessarily~$C_n$ is irreducible.

\begin{rk}
Using this argument, this is a general fact that if there exists in $\Graphe_{sing}$ only one loop outgoing from a point, \'etale by $\pi_2$, then the tower is irreducible. This is a common feature of many towers of the literature.
\end{rk}

 It can also be easily seen that
for~$\alpha_0,\ldots,\alpha_r$ in ${\mathcal R}$, the
points
$$
(1,\alpha_1,1,\alpha_2,\ldots,1,\alpha_r) \in C_{2r}
\qquad\text{and}\qquad
(\alpha_0,1,\alpha_1,1,\alpha_2,\ldots,1,\alpha_r) \in C_{2r+1}
$$
are smooth. Actually to be singular on~$C_n$ or~$C_n^\sharp$, a point must start
by~$1$ or~$\infty$ (a ramified point of~$f$)
or by~$\alpha \in {\mathcal R}$ (an incoming neighbor of a ramified point of~$f$)
and must end by~$0$
or~$\infty$ (a ramified point of~$g$). Thus there are two types of
singular points depending on the ending point:
$$
{\renewcommand{\arraystretch}{1.5}
\begin{array}{r|l|l}
\text{\bf Type} & \text{\bf Corresponding points on~$C_n$} & \text{\bf Range of~$r$} \\
\hline
T_\infty
   & (1,\alpha_1,1,\alpha_2,\ldots,1,\alpha_{r},\infty)
   & \text{$n$ odd and~$r = \frac{n-1}{2}$}\\
   & (\alpha_0,1,\alpha_1,1,\alpha_2,\ldots,1,\alpha_{r},\infty)
   & \text{$n$ even and~$r = \frac{n-2}{2}$} \\
\hline
T_0
   & (1,\alpha_1,1,\alpha_2,\ldots,1,\alpha_r,\infty,0,\ldots,0)
   & 0\leq r\leq \left\lfloor\frac{n-2}{2}\right\rfloor\\
   & (\alpha_0,1,\alpha_1,1,\alpha_2,\ldots,1,\alpha_r,\infty,0,\ldots,0)
   & 0\leq r\leq \left\lfloor\frac{n-3}{2}\right\rfloor
\end{array}}
$$
In this table, the integer~$r$ is the number of instances of
couples~$(1,\alpha)$ for~$\alpha\in{\mathcal R}$ in the considered
point of~$C_n$. If~$r=0$, there is no
such couples; the two $r=0$ cases in type~$T_0$
are points~$(\infty,0,\ldots,0)$ and~$(\alpha_0,\infty,0,\ldots,0)$.

\medbreak

Finding an exact formula or even an upper bound for the genus sequence
in this tower is
a pretty hard and technical problem;
at least three articles deal with this specific problem
in the literature \cite{Ap3-3,Ap3-1,Ap3-2}.
We have tried to compute the genus sequence in the
spirit of section~\ref{s_genre_geometrique_arithmetique},
using standard techniques of curves desingularization, such as Newton
polygons and local integral closure computations.
Unfortunately, having made lots of preliminary calculations, we do not
think that our approach can give rise to a simpler proof for the genus
formula. Let us just reformulate results of
propositions~$2.7$
and~$2.8$ in Crelle's \cite{Ap3-3} in terms of desingularization.
The points of type~$T_\infty$ give rise to a unique point after
desingularization; this is not difficult to prove even with
the measure of singularity.
Points of type~$T_0$ are much more harder to deal with.
Points~$(iv)$ of locally cited propositions say that:
if $s$ is odd (respectively even), and~$n \geq 2s-1$
(respectively~$n \geq 2s-2$), then
a point of type~$T_0$ gives rise to $q^{\frac{(s-1)}{2}}$
(respectively~$q^{\frac{(s-2)}{2}}$) points; in between, this is more tricky.
For our purpose, the key point is that:

\begin{prop} \label{prop_genre_Ap3} 
The number of geometric points of $\widetilde{C}_n$ coming from the
desingularization of $C_n$ is~$O(\sqrt{q^n})$.
\end{prop}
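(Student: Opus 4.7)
The plan is to enumerate the singular points of $C_n$ using the exhaustive classification into types $T_\infty$ and $T_0$ recorded in the table above, and then, for each such singular point $P$, bound the number of geometric points of $\widetilde{C}_n$ lying above $P$ using the local analysis carried out in propositions~$2.7$ and~$2.8$ of Bezerra--Garcia--Stichtenoth \cite{Ap3-3}. The key geometric input is thus already available; our job is purely a counting bookkeeping, organized by the shape of the tail of the singular point.

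First I would handle type $T_\infty$. A singular point of this type is determined by the tuple of $\alpha_i$'s it involves. Since $|\mathcal{R}|\leq q$ and the length of the prefix of alternating $1$'s and $\alpha$'s is at most $\lceil n/2\rceil$, the number of type $T_\infty$ points is bounded by $q^{\lceil n/2\rceil}$. As recalled just above the statement, each of them gives rise to a \emph{unique} geometric point after desingularization, so the total contribution of this type is $O(\sqrt{q^n})$.

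Next I would treat type $T_0$ by slicing according to the number $s\geq 1$ of trailing zeros. A point of type $T_0$ with parameter $s$ has a prefix of length $n-s-1$ or $n-s-2$ of alternating $1$'s and $\alpha$'s, hence there are at most $q\cdot q^{\lceil (n-s)/2\rceil}$ such singular points. On the other hand, the Crelle analysis tells us that, whenever $n$ is large enough compared to $s$, the number of geometric points of $\widetilde{C}_n$ above such a singular point is $q^{(s-1)/2}$ if $s$ is odd and $q^{(s-2)/2}$ if $s$ is even. Multiplying the two bounds yields, for each fixed $s$, a contribution of order
\[
q^{1+\lceil (n-s)/2\rceil} \cdot q^{(s-1)/2} \;=\; O\!\left(q^{n/2}\right),
\]
i.e.\ $O(\sqrt{q^n})$ uniformly in $s$. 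Summing over the $O(n)$ admissible values of $s$ gives the desired bound, the implicit polynomial factor in $n$ being harmless for the application we have in mind (namely, comparison with $d^n=q^n$ in the hypothesis of the main theorem).

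The main obstacle is not the large-$s$ regime, where the formulas from \cite{Ap3-3} apply directly, but rather the transitional range $s>(n+1)/2$ not covered by the clean statements of propositions~$2.7$ and~$2.8$. In that small range I would revisit the local computation at a point $(1,\alpha_1,\ldots,\alpha_r,\infty,0,\ldots,0)$ via the Newton polygon of the defining equations at $0$ and a direct integral-closure calculation, verifying that the number of branches passing through such a singular point is still bounded by $O(q^{s/2})$, so that the contribution of this regime is absorbed by the same $O(\sqrt{q^n})$ bound. Once this is checked, the proposition follows by adding the type $T_\infty$ and type $T_0$ contributions.
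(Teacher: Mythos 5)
The paper does not actually give a proof of Proposition~\ref{prop_genre_Ap3}: it states it as a ``key point'' following the reformulation of propositions~2.7 and~2.8 of Bezerra--Garcia--Stichtenoth~\cite{Ap3-3}, and even cautions that the authors' own attempt to redo the local desingularization computations was not productive. Your strategy --- classify singular points by type $T_\infty$ or $T_0$, count each type, and multiply by the number of branches recorded in \cite{Ap3-3} --- is exactly the bookkeeping the paper is implicitly invoking, so the approach is the right one.

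That said, two concrete issues remain. First, as you yourself note, the slicing over the parameter $s$ produces a bound of the form $O(n\sqrt{q^n})$ rather than the stated $O(\sqrt{q^n})$: for each admissible $s$ you obtain roughly $q^{(n-s)/2}$ singular points of that depth, each contributing about $q^{s/2}$ branches, so each slice carries $O(q^{n/2})$ and the $O(n)$ slices do not telescope. You dismiss the extra factor of $n$ as ``harmless for the application'' --- which is true, since hypothesis~(H) of Theorem~\ref{betar} only needs $o(d^n)=o(q^n)$ --- but strictly speaking this does not prove the statement $O(\sqrt{q^n})$; to recover the bound exactly as written you would need to know that the per-slice contributions actually decay geometrically away from some central value of $s$, or that the $q^{(s-1)/2}$ figure from \cite{Ap3-3} is a \emph{total} over all singularities of a given depth rather than a per-point count (the paper's phrasing leaves this ambiguous, and you should pin it down when reading Crelle). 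Second, the transitional range $s>(n+1)/2$, which you flag as the main obstacle, is not covered by the clean statements of~\cite[Prop.~2.7--2.8]{Ap3-3}, and the paper explicitly says this regime is ``more tricky'' without giving details. Your proposed Newton-polygon analysis is plausible, but note the authors report having tried precisely that route without success in obtaining a simpler genus formula; you would do better to argue directly that in this regime the number of branches at a $T_0$ point with $s$ trailing zeros is at most $q^{O(n-s)}$ (bounded by the much shorter prefix available), so that the contribution of these $s$ forms a geometric series in $q^{-1}$ and is absorbed into $O(\sqrt{q^n})$, rather than recomputing integral closures from scratch.

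Finally, one small imprecision: the paper never defines what the parameter $s$ in ``if $s$ is odd~\dots~$n\geq 2s-1$'' refers to; your reading of it as the number of trailing zeros is the natural one and consistent with the table of singular points, but since the whole proof leans on \cite[Prop.~2.7--2.8]{Ap3-3} you should state this identification explicitly and confirm it against the source before multiplying the two counts.
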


The lower bound for the number of rational points over~$\FF_{q^3}$ and the upper
bound for the genus sequence leads to the inequalities
$$
A(q^3) \geq \lambda_3(\Cun, \Cdeux) \geq \frac{2(q^2-1)}{q+2}.
$$

\section{Application to the asymptotic behaviour of recursive towers} \label{s_asymptotic}

Collecting all results of the previous sections, we prove here our main theorem~\ref{betar}, which states that (under some assumption) at most one $\beta_r$ is non zero for a recursive tower. We begin by proving proposition~\ref{nombrecycles}, which mixes combinatoric and intersection theory.  In conjunction with a diophantine lemma~\ref{dioapprox} and some spectral considerations on the graph, we deduce theorem~\ref{uniquecomposympa}, that under the assumptions of section~\ref{s_recursive_tower}, there exists at most one finite strongly connected component in the geometric graph~$\Graphe_{\infty}$.  This theorem~\ref{uniquecomposympa} and considerations on non-negative matrices and Perron-Frobenius theory then lead us to proposition~\ref{cle}, a very precise form of the connection between adjacency matrices, subgraphs and number of rational points on curves. After deducing proposition~\ref{d-reg_ou_plein_de_pts_dans_desingularisation}, a second necessary condition for a tower to be good, we prove our main theorem~\ref{betar}. As an example, we compute using our results some invariants defined by Tsfasmann and Vl{\u{a}}du{\c{t}} \cite{TV02} for the BGS tower already studied in section \ref{s_computation_genus}. We deduce corollary~\ref{basechange}, which gives the analogous statement for pull-back of recursive towers, a family of towers recently studied by some authors.

\subsection{Number of cycles}\label{NombreCycles}

For the statement of the next proposition, we recall that the  class in the Neron-Severi group $\NS(\Cun \times \Cun)_{\mathbb R}$ of a correspondence $C$ on $\Cun$ is a triple $(d_1, d_2, \sigma) \in {\mathbb Z}\times {\mathbb Z} \times \End\left(T_\ell(\Jac(X))\right)$ where $\Jac(X)$ is the Jacobian variety of $X$, and $T_\ell(\Jac(X))$ is its Tate module for some prime number $\ell$ prime to $q$. For instance, the class of the diagonal $\Delta$ is $(1, 1, \Id)$.
Then, the intersection number $C \cdot C'$ is given by 
\begin{equation} \label{produitdintersection}
C\cdot C'  = (d_1, d_2, \sigma)\cdot (d_1', d_2', \sigma') = d_1d_2'+d_1'd_2-\tr(\sigma \sigma').
\end{equation}
Moreover, Castelnuevo identity states that the bilinear form $\tr(\sigma \sigma')$ is negative definite (\cite{Zariski} chapter VII, appendix of Mumford p. 153). It is worth noticing that what we called up to now the
 type $(d_1, d_2)$ of a divisor $C$ in 
 $X\times X$ 
 is actually the``trivial" part  of its complete  class $(d_1, d_2, \sigma)$.

Note that Weil \cite{Weil} defined a product on the set of correspondences as follows. Let $C$, $C'$ be two correspondences on $\Cun$. Then the composition $C\circ C'$ is the correspondence on $\Cun$ given by:
$$
\begin{array}{ccccc}
\Cun &\longrightarrow & \Div(\Cun) & \longrightarrow &\Div(\Cun)\\
P
&\longmapsto
& \sum_{Q \in \Cun \mid (P, Q) \in C'} Q
& \longmapsto
&\sum_{\genfrac{}{}{0pt}{}{Q \in \Cun \mid (P, Q) \in C'}{R \in \Cun \mid (Q, R) \in C}} R.
\end{array}
$$
It then holds that, if $C, C'$ have classes $(d_1, d_2, \sigma)$ and $(d'_1, d'_2, \sigma')$ in $\NS(X \times X)_{\mathbb R}$, then $C\circ C'$ has class $(d_1d'_1, d_2d'_2, \sigma \sigma')$. Of course, the class of $C+C'$ is $(d_1+d'_1, d_2+d'_2, \sigma_1+\sigma_2)$. Since the rational Neron-Severi group $\NS(\Cun \times \Cun)_{\mathbb Q}$ is finite dimensional, say of dimension $\rho$, the classes $\Delta, C, C\circ C, \cdots, C^{\rho}$ are ${\mathbb Q}$ linearly dependant. This gives a ${\mathbb Q}$-linear relation between $\Id, \sigma, \sigma^2, \cdots, \sigma^{\rho}$, which implies that the eigenvalues of $\sigma$ are in fact algebraic numbers. 

\medskip

The nice feature in the following statement is that the left hand side of
formula~$(\ref{comb-geom})$ is combinatorial in nature, whereas the right hand side is geometric. In this statement, we denote by $\Sp (u)$ the spectrum of an operator $u$.


\begin{prop} \label{nombrecycles}
Let~$(\Cun,\Cdeux)$ be a correspondence as in section~\ref{s_recursive_tower},
let~$(C_n)_{n\geq 1}$ be the associated recursive tower whose
curves~$C_n$ are assumed to be irreducible.
We denote by $\pi_{1, n+1}$ the projection map $X^{n+1} \rightarrow X\times X$ which sends $(P_1, \dots, P_{n+1})$ to $(P_1, P_{n+1})$ and by $\Delta$ the diagonal of $\Cun \times \Cun$.

\begin{enumerate}
\item\label{scheme_cap} The scheme-theoretic intersection~$C_{n+1} \cap \pi_{1, n+1}^*(\Delta)$
in~$X^{n+1}$ is zero-dimensional and of degree equal to
$$
C_{n+1} \cdot \pi_{1, n+1}^*(\Delta)
=
2d^n - \sum_{\mu \in \Sp(\sigma)} \mu^n,
$$
where~$(d, d, \sigma)$ is the  class of~$\Cdeux$ in~$\NS(\Cun\times\Cun)_\RR$.
\item\label{one-one} There is a one-one correspondence between the geometric points
of~$C_{n+1} \cap \pi_{1, n+1}^*(\Delta)$ and the cycles of
length $n$ in~$\Graphe_{\infty}$, whose number~$c_n$ is thus finite.
\item\label{c_n} Let~$r\geq 1$ be such that the graph~$\Graphe_r$ contains the cycles of
length~$n$ and let~$A_r$ be the adjacency matrix of~$\Graphe_r$.
Then
\begin{equation} \label{comb-geom}
c_n = \sum_{\lambda \in \Sp(A_r)} \lambda^n \leq 2d^n - \sum_{\mu \in \Sp(\sigma)} \mu^n
\end{equation}
and the last inequality is strict if the scheme $C_{n+1} \cap \pi_{1,n+1}^*(\Delta)$
contains a point with multiplicity at least $2$.
\end{enumerate}
\end{prop}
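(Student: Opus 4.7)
The plan is to prove (i) by intersection theory on $\Cun^{n+1}$, then obtain (ii) and (iii) by unwinding definitions, a trace formula, and the standard inequality between number of geometric points and degree of a $0$-dimensional scheme. For (i), I would start by realizing $C_{n+1}$ inside $\Cun^{n+1}$ as the proper intersection of the $n$ divisors $p_{i,i+1}^*\Cdeux$, where $p_{i,i+1} : \Cun^{n+1}\to \Cun\times\Cun$ projects on the $(i,i+1)$-coordinates; using irreducibility of $C_{n+1}$ and reducedness of $\Cdeux$, this gives the cycle-class identity $[C_{n+1}] = \prod_{i=1}^n p_{i,i+1}^*[\Cdeux]$ in $A^n(\Cun^{n+1})$. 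The intersection $C_{n+1}\cap \pi_{1,n+1}^*(\Delta)$ is then zero-dimensional, for otherwise the irreducible curve $C_{n+1}$ would sit inside $\pi_{1,n+1}^*(\Delta)$, pushing down to the image $\pi_{1,n+1}(C_{n+1}) = \Cdeux^{\circ n}$ being contained in $\Delta$; this contradicts the fact that $\Cdeux^{\circ n}$ has type $(d^n, d^n)$ while $\Delta$ has type $(1,1)$ and $d \geq 2$.

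The projection formula applied to $\pi_{1,n+1}$ reduces the degree of the intersection to $(\pi_{1,n+1})_*[C_{n+1}] \cdot [\Delta]$ computed in $\NS(\Cun\times\Cun)_\RR$. An induction on $n$ using the push-pull definition of Weil composition — factoring $\pi_{1,n+2}$ through $\Cun^3$ and peeling off one $\Cdeux$-factor at a time — identifies $(\pi_{1,n+1})_*[C_{n+1}]$ with the class $[\Cdeux^{\circ n}]$. The multiplicative behavior of the $(d_1, d_2, \sigma)$-decomposition under Weil composition, recalled just above the statement, then gives $[\Cdeux^{\circ n}] = (d^n, d^n, \sigma^n)$. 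Combining this with $[\Delta] = (1, 1, \Id)$ in formula~(\ref{produitdintersection}) yields
$$
C_{n+1}\cdot\pi_{1,n+1}^*(\Delta) = d^n + d^n - \tr(\sigma^n) = 2d^n - \sum_{\mu\in\Sp(\sigma)}\mu^n,
$$
which finishes (i).

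Part (ii) is an immediate unwinding of definitions: a geometric point of $C_{n+1}\cap\pi_{1,n+1}^*(\Delta)$ is a tuple $(P_1, \dots, P_{n+1})$ with $(P_i, P_{i+1})\in\Cdeux$ and $P_1 = P_{n+1}$, which is exactly a cycle of length $n$ in $\Graphe_\infty$; the finiteness from (i) forces all $c_n$ cycles to live inside some finite graph $\Graphe_r$. For (iii), two distinct counts of $c_n$ meet: the combinatorial one via formula~(\ref{norme}), giving $c_n = \tr(A_r^n) = \sum_{\lambda\in\Sp(A_r)}\lambda^n$, and the geometric one giving the bound $c_n \leq C_{n+1}\cdot\pi_{1,n+1}^*(\Delta)$ through the standard inequality ``number of geometric points $\leq$ degree'' for a zero-dimensional scheme, with strict inequality precisely when some point carries multiplicity at least $2$. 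I expect the most delicate step to be the identification $(\pi_{1,n+1})_*[C_{n+1}] = [\Cdeux^{\circ n}]$ with no extra multiplicity: if $C_{n+1}$ were reducible the pushforward could acquire a spurious multiplicity factor, so the irreducibility hypothesis really enters at this point.
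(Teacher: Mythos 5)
Your overall strategy --- projection formula, identification of $(\pi_{1,n+1})_*[C_{n+1}]$ with $\Cdeux^{\circ n}$, the multiplicative behaviour of the $(d_1,d_2,\sigma)$-decomposition under Weil composition, and then the combinatorial/geometric double count of cycles --- is exactly the paper's route for items (i) and (iii), and (ii) is indeed just unwinding definitions.

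There is, however, a genuine gap in your zero-dimensionality argument. You argue that if $C_{n+1}\subset\pi_{1,n+1}^*(\Delta)$, then the image $\pi_{1,n+1}(C_{n+1})$ is contained in $\Delta$, and you invoke the types $(d^n,d^n)$ versus $(1,1)$ to get a contradiction. But the set-theoretic image is a reduced irreducible curve; if it lies in $\Delta$ it simply \emph{is} $\Delta$, and the pushforward \emph{cycle} $(\pi_{1,n+1})_*[C_{n+1}]=\Cdeux^{\circ n}$ would then equal $d^n\Delta$, which has precisely the type $(d^n,d^n)$. No contradiction follows from type alone. This is not a pedantic worry: for $\Cun=\PP^1$ the Jacobian part of the N\'eron--Severi class is trivial, so the full numerical classes of $\Cdeux^{\circ n}$ and $d^n\Delta$ coincide, and the types carry no extra information. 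What actually rules out $C_{n+1}\subset\pi_{1,n+1}^*(\Delta)$ is a concrete pointwise argument: choose a geometric point $P\in\Cun$ above which $\pi_1:\Cdeux\to\Cun$ is \'etale, so that $P$ has $d\geq 2$ \emph{distinct} outgoing neighbours $Q_1,\dots,Q_d$; pick any $(P_1,\dots,P_{n-1},P)\in C_n$ ending at $P$; then the $d$ distinct points $(P_1,\dots,P_{n-1},P,Q_i)\in C_{n+1}$ would all have to satisfy $Q_i=P_1$, which is absurd. The \'etaleness (available away from finitely many points since $\Cdeux$ is reduced) is what makes the $Q_i$ distinct, and this is what the type computation cannot see. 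The remainder of your proof --- once zero-dimensionality is in hand --- is sound and essentially identical to the paper's.
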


\begin{preuve}
We begin by proving that the irreducible curve $C_{n+1}$ is not contained in the hypersurface $\pi_{1,n+1}^*(\Delta)$.
From the hypothesis on $\Cdeux$, the first projection $\pi_1 : \Cdeux \rightarrow \Cun$ is
a finite morphism of degree $d$, \'etale except at a finite number of geometric points $(P, Q) \in \Cdeux$. Choose a geometric point $P \in \Cun$, such that $\pi_1$ is \'etale at any point $(P, Q_i) \in \Cdeux, 1 \leq i \leq d$
lying above $P$. 
Choose a geometric point $(P_1, \dots, P_{n-2}, P) \in C_{n-1}$ whose last coordinate is $P_{n-1}=P$. There are $d$ distinct geometric points $(P_1, \dots, P_{n-2}, P, Q_i) \in C_n$, for $1 \leq i \leq d$ lying above $(P_1, \dots, P_{n-2}, P) \in C_{n-1}$. Suppose now by contradiction that 
$C_n \subset  \pi_{1,n+1}^*(\Delta)$. This means that for any $1 \leq i \leq d$, we have $Q_i = P_1$, a contradiction since $d \geq 2$.
It follows that the intersection $C_{n+1} \cap \pi_{1,n+1}^*(\Delta)$ in $X^{n+1}$
is a zero dimensional subvariety.
By the projection formula, one
has~$
C_{n+1} \cdot \pi_{1,n+1}^*(\Delta) 
=
\left(\pi_{1, n+1}\right)_*(C_{n+1}) \cdot \Delta
$.
By definition, $\left(\pi_{1, n+1}\right)_*(C_{n+1})$ is nothing else than $\Cdeux \circ \Cdeux \circ \cdots \circ \Cdeux$ ($n$ times), hence its  class is $(d^n, d^n, \sigma^n)$. Now, the class of the diagonal equals $(1, 1, \Id)$, hence by (\ref{produitdintersection})
$$
C_{n+1} \cdot \pi_{1,n+1}^*(\Delta)
=
\left(\pi_{1, n+1}\right)_*(C_{n+1}) \cdot \Delta
=
d^n\times 1 + d^n \times 1 - \tr(\sigma^n),
$$
which proves~$\ref{scheme_cap}$.

Now, a geometric point $(P_1, \dots, P_{n+1}) \in \Cun^{n+1}$ corresponds to a cycle of length $n$ in $\Graphe_{\infty}$ if and only if $(P_1, \dots, P_{n+1}) \in C_{n+1}$ and $(P_1, \dots, P_{n+1}) \in \pi_{1, n+1}^*(\Delta)$, which means that cycles of length $n$ correspond to points in the zero-dimensional
intersection $C_{n+1} \cap \pi_{1,n+1}^*(\Delta)$ in $X^{n+1}$. This proves that~$c_n$ is finite
and that~$c_n \leq C_{n+1} \cdot \pi_{1,n+1}^*(\Delta)$.

Last, the equality~$c_n = \sum_{\lambda \in \Sp(A_r)} \lambda^n$ holds since the
number of cycles of length $n$ is by~(\ref{norme}) the trace of the $n$-th
power of the adjacency matrix $A_r$ of $\Graphe_r$.
\end{preuve}

\begin{rk}
In case $\Cun = {\mathbb P}^1$, it is also possible to give a proof of this proposition using resultants.
\end{rk}

This proposition is fruitful in conjunction with the following lemma:

\begin{lem}[Diophantine approximation]\label{dioapprox}
Let~$\lambda_1, \ldots,\lambda_k \in\CC^*$. Then there exists an integer $N \in {\mathbb N}^*$, such that
$\Re(\lambda_j^N) > 0$ for each~$1\leq j\leq k$.
\end{lem}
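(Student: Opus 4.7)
The plan is to reduce the statement to a simultaneous Diophantine approximation problem on the torus $(\RR/\ZZ)^k$ and then apply a pigeonhole (Dirichlet-type) argument. Write each $\lambda_j$ in polar form as $\lambda_j = r_j e^{2\pi i \theta_j}$ with $r_j > 0$ and $\theta_j \in \RR/\ZZ$; since only the argument matters for the sign of $\Re(\lambda_j^N)$, the moduli $r_j$ play no role. The condition $\Re(\lambda_j^N) > 0$ is then equivalent to saying that $N\theta_j \bmod 1$ lies in the open arc $(-1/4,\, 1/4)$, i.e. that the distance $\|N\theta_j\|$ from $N\theta_j$ to the nearest integer is strictly less than $1/4$.

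The problem is therefore reduced to: find $N\in\NN^*$ such that $\|N\theta_j\| < 1/4$ simultaneously for $j=1,\ldots,k$. For this I would apply Dirichlet's simultaneous approximation theorem. Consider the orbit of $0$ in the torus $T = (\RR/\ZZ)^k$ under translation by $(\theta_1,\ldots,\theta_k)$, i.e. the points $v_n = (n\theta_1 \bmod 1,\ldots,n\theta_k \bmod 1)$ for $n = 0,1,\ldots,M$, with $M = 8^k$. Partition $T$ into $8^k$ boxes of side $1/8$; by pigeonhole, two indices $n_1 < n_2$ satisfy $v_{n_1}$ and $v_{n_2}$ lying in the same box, so $N := n_2 - n_1 \in \NN^*$ satisfies $\|N\theta_j\| < 1/8 < 1/4$ for every $j$.

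The main (and essentially only) technical point is handling the case where some of the $\theta_j$'s are rational or more generally $\QQ$-linearly dependent, but the pigeonhole argument above makes no assumption of irrationality or independence, so it works uniformly. With $N$ as produced, one has $|2\pi N\theta_j - 2\pi p_j| < \pi/2$ for some integer $p_j$, whence $\cos(2\pi N\theta_j) > 0$, i.e. $\Re(\lambda_j^N) = r_j^N \cos(2\pi N\theta_j) > 0$ for each $j$, which is the claim. No further obstacle is expected; the lemma is a soft application of Dirichlet's theorem.
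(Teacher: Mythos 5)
Your proof is correct and follows essentially the same route as the paper: reduce to the statement that for every $\varepsilon>0$ there is an $N$ with $\|N\theta_j\|<\varepsilon$ simultaneously (a Dirichlet-type simultaneous approximation), then convert that back to $\Re(\lambda_j^N)>0$. The only real difference is cosmetic: the paper cites Hardy--Wright Theorem~201 for the simultaneous approximation and then converts via the triangle inequality (choosing $\varepsilon=1$ so that $|\mu_j^N-1|<1$, which forces $\Re(\mu_j^N)>1/2>0$), whereas you re-derive the needed approximation with an explicit pigeonhole argument on $(\RR/\ZZ)^k$ and read off the conclusion directly from $\|N\theta_j\|<1/4\Leftrightarrow\cos(2\pi N\theta_j)>0$. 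Your version is a bit more self-contained; the paper's is a bit shorter by outsourcing. Both are fine.
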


\begin{preuve} Let $\mu_j = \frac{\lambda_j}{\vert \lambda_j\vert}$ for $1 \leq j \leq k$. Then $\mu_j = \exp (2\imath \pi \theta_j)$ for some real number $\theta_j \in \RR$. It follows from Hardy and Wright \cite[theorem 201]{Hardy-Wright} that for any $\varepsilon >0$, there exists some $N \in {\mathbb N}^*$ such that $d(N\theta_j, \ZZ) < \varepsilon$ for all $1 \leq j \leq k$. By continuity of the exponential map, we can also choose $N$ such that $\vert \mu_j^N -1\vert < \varepsilon$ for any $1 \leq j \leq k$. Now we choose $\varepsilon = 1$. There exists some $N \in {\mathbb N}$, such that $\vert \lambda_j^N - \vert \lambda_j^N\vert \vert < \vert \lambda_j^N\vert$ for any $j$, which implies that $\Re e(\lambda_j^N) > 0$.
\end{preuve}

\subsection{Finite strongly connected regular components}\label{s_scc}

We should focus on {\em strongly connected components} of the
graph~$\Graphe_\infty$ and especially the finite ones.

\begin{prop} \label{primitive}
Let~$(\Cun,\Cdeux)$ be a correspondence as in section~\ref{s_recursive_tower}.
Then every finite $d$-regular strongly connected subgraph~$\Graphe$
of the graph~$\Graphe_\infty(\Cun, \Cdeux)$ is primitive.
\end{prop}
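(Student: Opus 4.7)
The plan is to assume, for contradiction, that the period $h$ of $\Graphe$ satisfies $h\geq 2$, and to reach a contradiction by confronting the number $c_n$ of cycles of length $n$ in $\Graphe_\infty$ with two different estimates along a carefully chosen sequence of indices $n$.

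First I would observe that because $\Graphe$ is $d$-regular inside $\Graphe_\infty$, every vertex $P$ of $\Graphe$ must already have all $d$ of its outgoing (resp.\ incoming) neighbours of $\Graphe_\infty$ lying in $\Graphe$; this can occur only if every edge of $\Graphe_\infty$ adjacent to $P$ is \'etale by both $\pi_1$ and $\pi_2$. By proposition~\ref{lisse&totdec}, it follows that every cycle of $\Graphe$ corresponds to a smooth point of the associated curve $C_{n+1}$, which in particular makes corollary~\ref{bouclesing} available. Applying Perron--Frobenius to the non-negative irreducible adjacency matrix $A$ of $\Graphe$ gives that its peripheral spectrum is exactly $\{d\zeta^j : 0\leq j<h\}$ with $\zeta=e^{2\imath\pi/h}$, each simple, while every remaining eigenvalue $\lambda$ of $A$ satisfies $|\lambda|<d$. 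For any $n$ divisible by $h$, this yields the lower bound
$$
c_n \;\geq\; \tr(A^n) \;=\; hd^n + \sum_\lambda \lambda^n,
$$
where the sum is over the non-peripheral eigenvalues of $A$.

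Next, pick any cycle of $\Graphe$, say of length $\ell$ (automatically a multiple of $h$); it is smooth by the first step, so corollary~\ref{bouclesing} supplies an integer $\rho_0$ such that the $\rho_0$-th iterate of the cycle is singular in $C_{\rho_0\ell+1}\cap H_{\rho_0\ell+1}$. Reading the proof of proposition~\ref{multiple}, the condition reduces to an identity of the shape $\eta^\rho=(-1)^{r\ell\rho}$ in the residue field $\FF_q(P_1,\ldots,P_\ell)$, and it remains true when $\rho_0$ is replaced by any positive multiple of (say) $2\rho_0$. Consequently, for every $n$ in the arithmetic progression $2\rho_0\ell\,\NN^*$, the scheme $C_{n+1}\cap\pi_{1,n+1}^*(\Delta)$ contains a point of multiplicity $\geq 2$, and proposition~\ref{nombrecycles} gives the \emph{strict} inequality $c_n < 2d^n - \tr(\sigma^n)$.

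Finally, I would apply the diophantine approximation lemma~\ref{dioapprox} to the finite set of nonzero complex numbers $\{\mu^{2h\rho_0\ell}:\mu\in\Sp(\sigma),\,\mu\neq 0\}\cup\{\lambda^{2h\rho_0\ell}:\lambda\text{ nonzero eigenvalue of }A\}$ to obtain a positive integer $M$ such that $\Re(\mu^N)>0$ and $\Re(\lambda^N)>0$ for all corresponding nonzero $\mu,\lambda$, where $N=2Mh\rho_0\ell$. Since $A$ is a real matrix and the characteristic polynomial of $\sigma$ has integer coefficients, eigenvalues come in complex-conjugate pairs, so both sums $\sum_\lambda\lambda^N$ and $\tr(\sigma^N)=\sum_\mu\mu^N$ are non-negative. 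Combining the lower bound and the strict upper bound at $n=N$ then gives
$$
0 \;\leq\; (h-2)d^N + \sum_\lambda \lambda^N + \tr(\sigma^N) \;<\; 0,
$$
a contradiction, forcing $h=1$, i.e.\ primitivity of $\Graphe$. The main obstacle I anticipate lies in the synchronisation between the two arithmetic progressions involved: the singularity condition of corollary~\ref{bouclesing} is a priori one-shot, whereas the diophantine lemma must supply infinitely many admissible $n$'s compatible with the divisibility $h\mid n$ needed for the eigenvalues $d\zeta^j$ to add up to $hd^n$. Routing both conditions through the common exponent $2h\rho_0\ell$ resolves it, but the step genuinely depends on the smoothness of the chosen cycle, which is why the $d$-regularity argument of the first paragraph is indispensable rather than cosmetic.
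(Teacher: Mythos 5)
Your proof is correct and relies on exactly the same ingredients as the paper's: the Perron--Frobenius description of the peripheral spectrum, the cycle-count comparison of Proposition~\ref{nombrecycles}, the diophantine Lemma~\ref{dioapprox}, and Corollary~\ref{bouclesing}. The only difference is the assembly: the paper first derives $(m-2)d^{mn}+\sum\lambda^{mn}+\sum\mu^{mn}\leq 0$, applies the diophantine lemma to get $m\leq 2$, and then runs a second, separate argument to rule out $m=2$ (forcing all non-peripheral eigenvalues of $A$ and all eigenvalues of $\sigma$ to vanish, so that every cycle of even length would have to be simple, contradicting Corollary~\ref{bouclesing}); you instead pick a single exponent $N=2Mh\rho_0\ell$ simultaneously compatible with the periodicity $h$, the singular-iterate period $2\rho_0\ell$, and the diophantine positivity, and the strict inequality of Proposition~\ref{nombrecycles}\,\ref{c_n} then kills both $h=2$ and $h\geq 3$ in one stroke. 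This is a mild but genuine streamlining. You also make explicit (via $d$-regularity forcing every edge to be \'etale by both $\pi_1$ and $\pi_2$, hence cycles in $\Graphe$ smooth) the smoothness hypothesis needed to invoke Corollary~\ref{bouclesing}, which the paper's proof uses implicitly; that point is indeed necessary and not cosmetic, as you note.
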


\begin{preuve}
Let~$A$ be the adjacency matrix of the subgraph~$\Graphe$.
Since~$\Graphe$ is supposed
to be strongly connected, the matrix~$A$ is irreducible.
Since~$\Graphe$ is $d$-regular, the vector~$(1, \dots, 1)$
is an eigenvector of~$A$ for the eigenvalue~$d$. By Perron-Frobenius
theorem, this eigenvalue is simple and is nothing else than the spectral
radius of~$A$. Moreover there exists a primitive root of unity~$\zeta_m$
such that the eigenvalues of absolute value~$d$
are the~$d\zeta_m^i$ for~$0\leq i \leq m-1$,
and all these eigenvalues are also simple.

Relating to the trace of the matrix~$A^{mn}$ for~$n\geq 1$, this implies that
$$
\tr(A^{mn})
=
m d^{mn} + \sum_{\genfrac{}{}{0pt}{2}{\lambda \in \Sp(A)}{|\lambda|<d}} \lambda^{mn}
$$
But this trace is also the number of cycles of length~$mn$ in~$\Graphe$.
By proposition~\ref{nombrecycles}, we thus have
$$
\forall n \geq 1,
\qquad
(m-2) d^{mn}
+ \sum_{\genfrac{}{}{0pt}{2}{\lambda \in \Sp(A)}{|\lambda|<d}} \lambda^{mn}
+ \sum_{\mu \in \Sp(\sigma)} \mu^{mn}
\leq 0,
$$
where~$(d,d,\sigma)$ is the class of~$\Cdeux$ in~$\NS(\Cun\times\Cun)_\RR$.
Note that in the left sum, any~$\lambda \in \CC\setminus\RR$ appears together
with its conjugate~$\bar{\lambda}$. Then this sum is, in fact, a sum
of real parts of powers of complex numbers.
By lemma~\ref{dioapprox}, we deduce that~$m\leq 2$. Moreover, if~$m=2$,
 all the eigenvalues~$\lambda$'s in the left sum must be equal to zero
using lemma~\ref{dioapprox} another time.
Then the number of cycles of length~$2n$ in~$\Graphe$, counted without
multiplicities, is exactly~$2d^{2n}$. But by proposition~\ref{nombrecycles},
it is also equal to~$2d^{2n}$ counted with multiplicities. Then,
for all~$n\geq 1$, every
cycle of length~$2n$ must be simple.
Thanks to corollary~\ref{bouclesing}, we know that this is
impossible. Hence~$m=1$; this characterizes the fact that the matrix~$A$
or the graph~$\Graphe$ are primitive.
\end{preuve}

\begin{theo} \label{uniquecomposympa}
Let~$(\Cun,\Cdeux)$ be a correspondence as in section~\ref{s_recursive_tower} such that
the curves~$C_n$ of the associated tower are all irreducible.
Then the graph~$\Graphe_\infty(\Cun,\Cdeux)$ has at most one finite $d$-regular strongly connected
component.
\end{theo}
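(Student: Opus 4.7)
The plan is to argue by contradiction: suppose there exist $k \geq 2$ distinct finite $d$-regular strongly connected components $\Graphe_1, \ldots, \Graphe_k$ of $\Graphe_\infty(\Cun,\Cdeux)$, with adjacency matrices $A_1, \ldots, A_k$. By Proposition \ref{primitive} each $\Graphe_i$ is primitive, so Perron--Frobenius gives that $d$ is a simple eigenvalue of $A_i$ and that no other eigenvalue has modulus $d$. Being $d$-regular also forces every edge in $\Graphe_i$ to be \'etale for both $\pi_1$ and $\pi_2$, so $\Graphe_i$ sits outside the singular part $\Gsing$.

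Next I would count length-$n$ cycles in $\Graphe_\infty$ in two different ways. The disjoint subgraphs $\Graphe_1,\ldots,\Graphe_k$ contribute at least $\sum_{i=1}^k \tr(A_i^n) = kd^n + \sum_i \sum_{\lambda \in \Sp(A_i)\setminus\{d\}} \lambda^n$ cycles to $\Graphe_\infty$, while Proposition \ref{nombrecycles} gives the intersection-theoretic upper bound $c_n \leq 2d^n - \sum_{\mu \in \Sp(\sigma)} \mu^n$. Combining these yields, for every $n \geq 1$,
$$
(k-2) d^n + \sum_{i=1}^k \sum_{\lambda \in \Sp(A_i)\setminus\{d\}} \lambda^n + \sum_{\mu \in \Sp(\sigma)} \mu^n \leq 0.
$$
Lemma \ref{dioapprox} applied to the finite list of \emph{nonzero} eigenvalues produces some $N$ with $\Re(\lambda^N), \Re(\mu^N) > 0$ for each; since the $A_i$ have integer entries and $\sigma$ comes from a real endomorphism of the Tate module, eigenvalues appear in complex conjugate pairs and the double sum at $n = N$ is a strictly positive real number unless there are no nonzero eigenvalues. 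Hence $(k-2)d^N \leq 0$ forces $k \leq 2$; in the borderline case $k = 2$ the double sum must vanish identically, so every nondominant $\lambda \in \Sp(A_i)$ and every $\mu \in \Sp(\sigma)$ is zero.

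It remains to rule out $k = 2$, and this is where Corollary \ref{bouclesing} enters. Under the preceding conclusions $\tr(A_i^n) = d^n$ and $\tr(\sigma^n) = 0$, so the cycles of $\Graphe_1 \cup \Graphe_2$ number exactly $2d^n$, matching the degree $2d^n$ of the zero-dimensional scheme $C_{n+1} \cap \pi_{1,n+1}^*(\Delta)$; hence this scheme is reduced for every $n$. But $\Graphe_1$ is $d$-regular, strongly connected, with $d \geq 2$, so it contains some cycle $(P_1,\ldots,P_l,P_1)$. Since $\Graphe_1$ lies outside $\Gsing$, Proposition \ref{lisse&totdec} makes this a smooth point of $C_{l+1}$, and Corollary \ref{bouclesing} produces an iterate which is singular in $C_{\rho l + 1}\cap H_{\rho l +1} = C_{\rho l + 1}\cap \pi_{1,\rho l+1}^*(\Delta)$. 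This singular point has local multiplicity $\geq 2$, so by the strict-inequality clause of Proposition \ref{nombrecycles} one obtains $c_{\rho l} < 2d^{\rho l}$, contradicting the cycle count just obtained.

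The main obstacle I anticipate is the diophantine step: one must apply Lemma \ref{dioapprox} uniformly to all nonzero eigenvalues, including possibly real negative $\lambda$'s and complex $\mu$'s whose moduli are a priori unconstrained, and then verify that pairing complex conjugates turns the choice $\Re(\lambda^N) > 0$ into a genuinely positive real contribution to the combined sum. Everything else is bookkeeping built on ingredients already established: Proposition \ref{primitive} for primitivity of each component, Proposition \ref{nombrecycles} for the combinatorial/geometric cycle comparison, Proposition \ref{lisse&totdec} to obtain smoothness of a cycle outside $\Gsing$, and Corollary \ref{bouclesing} to inject a forced multiplicity into the intersection scheme.
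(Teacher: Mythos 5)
Your proof is correct and uses exactly the same ingredients as the paper's (Perron--Frobenius via Proposition~\ref{primitive}, the cycle-count comparison of Proposition~\ref{nombrecycles}, the diophantine Lemma~\ref{dioapprox}, and the forced singular iterate from Corollary~\ref{bouclesing}), but organizes them in a different order. The paper first invokes Corollary~\ref{bouclesing} to exhibit a cycle of some length~$m$ with multiplicity~$\geq 2$; this injects the crucial ``$-1$'' into the inequality
\[
k d^{mn} + \sum_{\lambda}\lambda^{mn} + \sum_{\mu}\mu^{mn} \leq 2d^{mn}-1,
\]
and a single application of Lemma~\ref{dioapprox} at the lengths $mn$ then yields $k\leq 1$ directly. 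You instead apply Lemma~\ref{dioapprox} first, without the singular cycle, obtaining $k\leq 2$, and then dispatch the borderline $k=2$ case separately by showing that all nondominant eigenvalues would have to vanish, forcing every intersection scheme $C_{n+1}\cap\pi_{1,n+1}^*(\Delta)$ to be reduced, which Corollary~\ref{bouclesing} then contradicts. Your two-step route mirrors the pattern used in the paper's own proof of Proposition~\ref{primitive} (where $m\leq 2$ is shown first and $m=2$ is ruled out afterwards), and it has the small advantage of making explicit why the multiplicity-two cycle is smooth (via Proposition~\ref{lisse&totdec}), a point the paper leaves implicit. The paper's single-shot version is shorter, and the authors themselves flag in a remark that without the ``$-1$'' one would only get $k\leq 2$; your argument fills in exactly what goes into upgrading that to $k\leq 1$.
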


\begin{rk}
This theorem contains as a particular case Beelen's theorem 5.5 \cite{Beelen} in the case of towers he called {\em of type A} on $\Cun = {\mathbb P}^1$.
\end{rk}

\begin{preuve}
Suppose that there exists at least one such component. Let~$\Graphe_1, \dots, \Graphe_k$, some finite $d$-regular strongly connected components of~$\Graphe_{\infty}$
and let~$A_i$, $1\leq i\leq k$, be their adjacency matrices.
We denote by~$\Sp(A_i)$ the spectrum of each~$A_i$.
As noticed in the preceding proof, each matrix~$A_i$ has  spectral
radius~$d$, and $d$ is a simple eigenvalue. Hence, for any $n \geq 1$
$$
\tr(A_1^n) + \dots + \tr(A_k^n)
=
k d^n + \sum_{\lambda \in \bigcup_{i=1}^k \Sp(A_i) \setminus \{d\}}\lambda^n.
$$

But this sum of traces is also the number of cycles of length~$n$ in the union of the $\Graphe_i$'s for $1 \leq i \leq k$, which is of course less than the number of cycles of length $n$ in the arithmetic graph $\Graphe_r$ for $r$ large enough. Now, we have assumed that there exists at least one finite $d$-regular strongly connected component, which contains of course at least one cycle of some length. Taken sufficiently often, this cycle has multiplicity at least $2$ by corollary \ref{bouclesing}, that is there is at least one cycle, of some length $m \in {\mathbb N}^*$, having multiplicity at least $2$.
%
Thus, by item~$\ref{c_n}$ of proposition~\ref{nombrecycles}, we have
for any $n \geq 1$,
\begin{equation}\label{moinsun}
k d^{mn} + \sum_{\lambda \in \bigcup_{i=1}^k \Sp(A_i) \setminus \{d\}}\lambda^{mn}
+\sum_{\mu \in \Sp(\sigma)} \mu^{mn}\leq 2d^{mn}-1
\end{equation}
that is
$$\sum_{\lambda \in \Sp(\sigma) \bigcup (\Sp(A_1)\setminus \{d\})\bigcup \dots  \bigcup (\Sp(A_k) \setminus \{d\})}(\lambda^m)^n \leq(2-k)d^{mn} -1.$$
Due to lemma~\ref{dioapprox}, there exists some $N \in {\mathbb N}^*$ such that
$$0 \leq (2-k)d^{mN}-1,$$
which implies that $k \leq 1$.
\end{preuve}

\begin{rk}
It is worth noticing that the uniqueness comes from the ``$-1$" at the end of formula (\ref{moinsun}), which itself comes from the whole section \ref{SectionSingular} through its final statement corollary \ref{bouclesing}. Without this term, the conclusion would be that there were at most {\em two} finite $d$-regular strongly connected components. Consequences on the rest of this paper would be that there were at most two (resp. $2d$) non-zero $\beta_r$'s in theorem \ref{betar} (resp. in corollary \ref{basechange}). 
\end{rk}

The following proposition describes accurately the connection between three different worlds.

\begin{prop}\label{cle}
Let~$(\Cun,\Cdeux)$ be a correspondence as in section~\ref{s_recursive_tower}, $S \subset \Cun(\overline{\FF_q})$ be a finite subset, $\Graphe_S$
be the corresponding subgraph of~$\Graphe_\infty$
and~$A_S$ be the adjacency matrix of~$\Graphe_S$.
Then, the following assertions are equivalent:
\begin{enumerate}
\item the spectral radius~$\rho(A_S)$ of~$A_S$ equals~$d$;\label{rhoAeqd}
\item there exists a unique~$\Sigma \subset S$ such that~$\Graphe_\Sigma$ is
$d$-regular and strongly connected;\label{Sigmadreg}
\item there exists~$c > 0$, such that: \label{pleinpoints}
$$
\sharp\{(P_1, \ldots, P_{n+1}) \in C_{n+1}(\overline{{\mathbb F}_q}) \mid \;
P_i \in S, \, \forall i\} = c\times d^n + o(d^n).
$$
\end{enumerate}
Moreover, if these assertions are true, then the constant $c$
in~$\ref{pleinpoints}$ equals $\sharp \Sigma$ in~$\ref{Sigmadreg}$;
otherwise,
$\sharp\{(P_1, \ldots, P_{n+1}) \in C_{n+1}(\overline{{\mathbb F}_q}) \mid P_i \in S,\, \forall i\}
= o(d^n)$.
\end{prop}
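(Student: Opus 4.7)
The plan is to work directly with the non-negative adjacency matrix $A_S$ and its block structure coming from the strongly connected components of $\Graphe_S$. The starting observation is that in $\Graphe_\infty$ the out-degree $d^+(P)$ and in-degree $d^-(P)$ at any vertex are at most $d$, since $\pi_1,\pi_2$ both have degree $d$. \emph{A fortiori} the same holds in $\Graphe_S$, so every row sum and every column sum of $A_S$ is $\le d$, which immediately gives $\rho(A_S)\le d$.

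For the equivalence $(\ref{rhoAeqd}) \Leftrightarrow (\ref{Sigmadreg})$, the implication $(\ref{Sigmadreg}) \Rightarrow (\ref{rhoAeqd})$ is trivial: a $d$-regular $\Graphe_\Sigma$ satisfies $A_\Sigma \mathbf{1}_\Sigma = d\mathbf{1}_\Sigma$, so $\rho(A_\Sigma)=d$, and since $A_\Sigma$ is a principal submatrix of $A_S$ one has $\rho(A_S)\ge \rho(A_\Sigma)=d$, hence equality. Conversely, assume $\rho(A_S)=d$ and split $\Graphe_S$ into its strongly connected components $\Sigma_1,\dots,\Sigma_k$; by Frobenius normal form $\rho(A_S)=\max_i\rho(A_{\Sigma_i})$, so at least one $\Sigma:=\Sigma_i$ satisfies $\rho(A_\Sigma)=d$. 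Perron--Frobenius applied to the irreducible matrix $A_\Sigma$ provides strictly positive right and left eigenvectors $w$ and $u$ with $A_\Sigma w = dw$ and $u^T A_\Sigma = d u^T$. Then
\[
0 \;=\; u^T\bigl(d\mathbf{1}_\Sigma - A_\Sigma \mathbf{1}_\Sigma\bigr) \;=\; \sum_{P\in\Sigma} u_P\bigl(d - d^+_\Sigma(P)\bigr),
\]
and because $u_P>0$ while $d^+_\Sigma(P)\le d$, every summand vanishes, so $d^+_\Sigma(P)=d$ for all $P\in\Sigma$. A symmetric computation using $w$ gives $d^-_\Sigma(P)=d$. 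Hence $\Graphe_\Sigma$ is $d$-regular.

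The key geometric consequence is that the degree equalities in $\Graphe_\Sigma$ already saturate the a~priori bounds in $\Graphe_\infty$, so every edge of $\Graphe_\infty$ incident to a vertex of $\Sigma$ already lies inside $\Sigma$: the set $\Sigma$ is \emph{both} forward and backward complete. In particular $\Sigma$ is a finite $d$-regular strongly connected component of $\Graphe_\infty$, and its uniqueness inside $S$ then follows from Theorem~\ref{uniquecomposympa}. The same completeness property shows that $A_S$ is not merely block upper triangular but in fact block diagonal, $A_S=\mathrm{diag}(A_\Sigma,B)$, where $B$ is the adjacency matrix of $\Graphe_{S\setminus\Sigma}$, and every strongly connected component of $\Graphe_{S\setminus\Sigma}$ has spectral radius strictly less than $d$ (otherwise the argument above would produce a second $d$-regular strongly connected component of $\Graphe_\infty$, contradicting uniqueness), whence $\rho(B)<d$.

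For $(\ref{rhoAeqd}) \Rightarrow (\ref{pleinpoints})$, the number to be counted is the number of walks of length $n$ in $\Graphe_S$, that is $\mathbf{1}^T A_S^n \mathbf{1}$, and the block-diagonal splitting gives
\[
\mathbf{1}^T A_S^n \mathbf{1} \;=\; \mathbf{1}_\Sigma^T A_\Sigma^n \mathbf{1}_\Sigma \;+\; \mathbf{1}^T B^n \mathbf{1} \;=\; d^n\, \sharp\Sigma \;+\; O\bigl(n^{s} \rho(B)^n\bigr) \;=\; d^n\, \sharp\Sigma + o(d^n),
\]
the first term being exact because $A_\Sigma\mathbf{1}_\Sigma=d\mathbf{1}_\Sigma$. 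This proves $(\ref{pleinpoints})$ with $c=\sharp\Sigma>0$. Finally, $(\ref{pleinpoints}) \Rightarrow (\ref{rhoAeqd})$ is the contrapositive: if $\rho(A_S)<d$, the Jordan decomposition yields $\mathbf{1}^T A_S^n \mathbf{1} = O(n^{s} \rho(A_S)^n) = o(d^n)$, incompatible with a positive $c$; this same estimate is precisely the ``otherwise'' clause. The step I expect to be the main obstacle is the upgrade from the spectral fact $\rho(A_\Sigma)=d$ to the strong geometric statement that $\Sigma$ is complete in $\Graphe_\infty$: without the \emph{double} completeness the decomposition would be only block upper triangular, and a non-trivial surviving contribution of the form $d^{n-1}\mathbf{1}^T(I-B/d)^{-1}C\mathbf{1}_\Sigma$ (coming from walks that first travel in $S\setminus\Sigma$ and then fall into $\Sigma$) would prevent the clean identity $c=\sharp\Sigma$.
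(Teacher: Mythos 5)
Your proof is correct, and it reaches the same key insight as the paper (a $d$-regular strongly connected $\Sigma \subset S$ is automatically forward and backward complete, hence a weakly connected component of $\Graphe_\infty$, which makes $A_S$ block diagonal and makes the uniqueness of $\Sigma$ follow from Theorem~\ref{uniquecomposympa}). The route to that insight differs in one genuine way: for $(\ref{rhoAeqd})\Rightarrow(\ref{Sigmadreg})$ the paper takes a non-negative eigenvector of $A_S$ itself for the eigenvalue $d$ (Horn--Johnson Th.~8.3.1), lets $\Sigma$ be its support, proves $\Graphe_\Sigma$ is $d$-regular from the column- and row-sum identities, and only afterwards passes to a strongly connected component; you instead pass to the Frobenius normal form first, isolate a strongly connected component $\Sigma$ whose block already has $\rho(A_\Sigma)=d$, and then invoke the \emph{full} Perron--Frobenius theorem on that irreducible block to get strictly positive left and right eigenvectors, which give the $d$-regularity at one stroke. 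Your version is arguably cleaner because you never have to deal with a possibly disconnected $d$-regular subgraph; the paper's version has the small advantage of not needing the Frobenius normal form machinery. The remaining differences are cosmetic: you bound $\vertiii{B^n}$ by a Jordan-form estimate $O(n^{s}\rho(B)^n)$, the paper by Gelfand's spectral radius formula $\rho=\lim\sqrt[n]{\vertiii{A^n}}$ (which they have to note is valid even though $\vertiii{\cdot}$ is not submultiplicative) --- both yield $o(d^n)$. Your closing remark correctly identifies where the argument really has teeth: without the backward completeness of $\Sigma$ (equivalently, without the second Perron eigenvector) one would only get a block \emph{triangular} $A_S$, and the cross term would spoil the clean identity $c=\sharp\Sigma$.
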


\begin{preuve}
$\ref{rhoAeqd}\Rightarrow\ref{Sigmadreg}$
Another consequence of 
Perron-Frobenius theorem is 
theorem~8.3.1 in Horn \& Johnson's book on matrix
analysis \cite{MatrixAnalysis} which states 
that the spectral radius~$d$ of~$A_S$ is an eigenvalue and that $d$
is associated to a non-negative eigenvector 
$$
u = (u_P)_{P \in  S}, \hbox{ such that $u_P \geq 0$ for any } P  \in  S.
$$
Of course, $u_P$ may be zero for some $P \in S$.
Let
$\Sigma \subset S$
be the set of~$P \in S$ such that $u_P \neq 0$.

Let~$A_S = (a_{P,Q})_{P,Q\in S}$ and~$A_\Sigma = (a_{P,Q})_{P,Q\in\Sigma}$. Then
$$
\forall P\in\Sigma,
\qquad
\sum_{Q\in S} a_{P,Q}u_Q = \sum_{Q\in\Sigma} a_{P,Q}u_Q = d u_P,
$$
which means that the positive vector~$(u_P)_{P\in\Sigma}$ is an eigenvector
of~$A_\Sigma$ for the eigenvalue~$d$. The matrix~$A_\Sigma$ is nothing else than
the adjacency matrix of the subgraph~$\Graphe_\Sigma$.

Now, we prove that $\Graphe_{\Sigma}$ is $d$-regular, outside the singular part.
By summation
$$
\sum_{Q \in \Sigma}\left(\sum_{P \in \Sigma}a_{P, Q}\right) u_Q = \sum_{Q \in \Sigma}d u_Q.
$$
But:
\begin{itemize}
\item each $u_Q$ is $>0$ for $Q \in \Sigma$,
\item each $(\sum_{P \in \Sigma}a_{P, Q})$ satisfies $\sum_{P \in \Sigma}a_{P, Q} \leq d$.
\end{itemize}
Hence
$$
\forall Q \in \Sigma, \qquad \sum_{P \in \Sigma}a_{P, Q} = d.
$$
Let $Q \in \Sigma$; each term in the $Q$'th column in the adjacency matrix $A_{\Sigma}$ contains exactly $d$ coefficients $1$. This means first that $\pi_1$ is \'etale at any edge exiting from $Q$, second that~$\Sigma$ is forward complete. Using a similar argument with the lines of $A_{\Sigma}$, we also prove that $\pi_2$ is \'etale at any edge entering at $Q$, and that $\Sigma$ is also backward complete, hence complete. In conclusion, the graph~$\Graphe_\Sigma$
is $d$-regular and any of its strongly connected components works. Now, such a strongly connected $d$-regular component is unique by theorem~\ref{uniquecomposympa}.

$\ref{Sigmadreg}\Rightarrow\ref{pleinpoints}$ As already noted, there is
a one-to-one correspondence between the set of
points of~$C_{n+1}$ with coordinates in~$S$ and the set paths
of length~$n$ in~$\Graphe_S$. Therefore (see section~\ref{s_basic_results_graphs}):
$$
\sharp\{(P_1, \cdots, P_{n+1}) \in C_{n+1}(\overline{{\mathbb F}_q}) \mid \;
P_i \in S, \, \forall i\} = \vertiii{A_S^n} 
$$
and we are reduced to compute an equivalent of this norm.

To this end, let~$\Graphe_{S_1}, \ldots, \Graphe_{S_r}$ be
the weakly connected components of~$\Graphe_S$.
The graph~$\Graphe_\Sigma$ is one of these components, say~$\Graphe_{S_1}$.
Let~$A_{S_1},\ldots,A_{S_r}$ be the corresponding adjacency matrices.
Then~$A_S$ is the block diagonal matrix whose blocks are
the~$A_{S_i}$'s and the norm satisfies
$$
\vertiii{A_S^n} = \sum_{i=1}^r \vertiii{A_{S_i}^n}.
$$
The first norm~$\vertiii{A_{S_1}^n}$ equals the number of paths of length $n$
in~$\Graphe_{S_1} = \Graphe_{\Sigma}$, that is equals $\sharp\Sigma \times d^n$
by $d$-regularity. 
For~$i\geq 2$, the spectral radius must satisfy~$\rho(A_{S_i}) < d$,
otherwise, by
$\ref{rhoAeqd}\Rightarrow\ref{Sigmadreg}$,
$\Graphe_{S_i}$ would contain a $d$-regular strongly
connected component. This would contradict the uniqueness stated
in theorem~\ref{uniquecomposympa}.
Gelfand spectral radius theorem states
that~$\rho(A_{S_i}) = \lim_{n \to \infty} \root{n}\of {\vertiii{A_{S_i}^n}}$.
Therefore,~$\rho(A_{S_i}) < d$ implies that~$\vertiii{A_{S_i}^n} = o(d^n)$.
We deduce that~$\vertiii{A_S^n} = \sum_{i=1}^r \vertiii{A_{S_i}^n}
=
\sharp \Sigma\times d^n + o(d^n)$.

$\ref{pleinpoints}\Rightarrow\ref{rhoAeqd}$ Hypothesis~$\ref{pleinpoints}$
means that~$\vertiii{A_S^n} = c\times d^n + o(d^n)$.
Hence (Gelfand theorem again):
$$
\rho(A_{S})
=
\lim_{n \to \infty} \root{n}\of {\vertiii{A_{S}^n}}
=
\lim_{n \to \infty} \root{n}\of {c\times d^n + o(d^n)}
= d.
$$

\medbreak

Along the proof of the equivalence, we have proved that the constant
must be equal to~$\sharp\Sigma$. 

For the last assertion, we remark that~$\rho(A_S) \leq d$:
that is because $A_S$ is a non-negative matrix whose sums on any line and
any column are between $0$ and $d$. If assertion~$\ref{rhoAeqd}$ is not
satisfied, this means that~$\rho(A_S) < d$. Using again
the Gelfand spectral radius theorem, one directly  proves
that~$\vertiii{A_S^n} =  o(d^n)$.
\end{preuve}

\begin{rk}
The matrix norm~$\vertiii{A}$ we use is not
submultiplicative. Fortunately, Gelfand spectral radius theorem does not require this assumption (cf.~\cite[Corollary~5.7.10]{MatrixAnalysis}).
\end{rk}

\subsection{The~$(\beta_r)_{r\geq 1}$ sequence of recursive towers}


We apply most of the results of the preceding sections to show a specific
property satisfied by recursive towers. Especially, we focus on
the~$(\beta_r)_{r\geq 1}$ sequence.

We need some common notations for the statements of this section.
Let $C$ be a singular curve defined over $\FF_q$,
let~$\nu : {\widetilde C} \rightarrow C$ denote the normalization map,
and let $P \in C(\FF_{q^r})$ be a point of $C$ defined over~$\FF_{q^r}$
for some~$r \in \NN^* \cup \{\infty\}$ (with the
convention~$\FF_{q^\infty} = \overline{\FF_q}$).
For~$s \in \NN^* \cup \{\infty\}$, we put
$$
\nu_P\left(\FF_{q^s}\right)
=
\sharp \left\{Q \in {\widetilde C}\left(\FF_{q^s}\right) \vert \nu(Q)=P\right\},
$$
the number of $\FF_{q^s}$-rational points of ${\widetilde C}$ above $P$.

We will make heavy use of the $r$-arithmetic graph~$\Graphe_r$ (see
definition~\ref{vocabulaire}), whose
adjacency matrix is denoted by~$A_r$.
Since the norm $\vertiii{A_r^n}$ is the number of paths of length~$n$ in the
arithmetic graph by~(\ref{norme}), we have
\begin{equation} \label{somme}
\sharp \widetilde{C}_{n+1}(\FF_{q^r})
=
\vertiii{A_r^n}
+
\sum_{\text{$P \in C_{n+1}(\FF_{q^r})$}} (\nu_P(\FF_{q^r})-1),
%
\end{equation}

We first give another necessary condition for a recursive tower to have at
least one non-zero~$\beta_r$.

\begin{prop}\label{d-reg_ou_plein_de_pts_dans_desingularisation}
Let~$(\Cun,\Cdeux)$ be a correspondence as in section~\ref{s_recursive_tower}.
Suppose that
the curves~$C_n$ of the associated tower are all irreducible, and that the geometric genus sequence~$(g_n)_{n\geq 1}$ goes to~$+\infty$. Suppose also that at least one $\beta_r$ is non-zero. Then:
\begin{enumerate}
\item \label{exists-d-reg} either the graph~$\Graphe_\infty(\Cun,\Cdeux)$ has exactly one finite $d$-regular strongly connected
component; 
\item \label{plein_de_pts_desingularisation} or the number of new geometric points in $\widetilde{C_n}(\overline{\mathbb F}_q)$ coming from desingularization has asymptotic behaviour:
$$\sum_{P \in C_{n+1}({\overline \FF}_q)} (\nu_P({\overline \FF}_q)-1) = c \times d^n + o(d^n)$$
for some constant $c>0$.
\end{enumerate}
\end{prop}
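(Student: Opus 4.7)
The plan is to exploit equation~(\ref{somme}) at a level~$r$ where~$\beta_r\neq 0$ and to split on the spectral radius~$\rho(A_r)$ of the adjacency matrix of the arithmetic graph~$\Graphe_r$. First I would fix an integer $r\geq 1$ with $\beta_r \neq 0$. Since $\lambda_r = \sum_{d\mid r} d\beta_d \geq r\beta_r > 0$, Lemma~\ref{exacte} yields $g_n/d^n \to c'$ for some $c' > 0$, so $g_n \sim c' d^n$, and consequently $N_r(\widetilde{C_n}) \sim \lambda_r c' d^n$.

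Next I would split according to whether $\rho(A_r) = d$ or $\rho(A_r) < d$ (note that $\rho(A_r) \leq d$ since the row and column sums are bounded by~$d$). In the former case, Proposition~\ref{cle} applied with $S=\Cun(\FF_{q^r})$ (implication~$\ref{rhoAeqd}\Rightarrow\ref{Sigmadreg}$) produces a subset $\Sigma\subset\Cun(\FF_{q^r})$ such that $\Graphe_\Sigma$ is $d$-regular and strongly connected. Combining with the uniqueness statement of Theorem~\ref{uniquecomposympa}, $\Graphe_\Sigma$ is then the only finite $d$-regular strongly connected component of $\Graphe_\infty$, which is exactly case~(i).

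In the latter case $\rho(A_r) < d$, the last assertion of Proposition~\ref{cle} gives $\vertiii{A_r^n} = o(d^n)$. Substituting this and $N_r(\widetilde{C_{n+1}}) \sim \lambda_r c' d\cdot d^n$ into~(\ref{somme}), I obtain
\[
\sum_{P\in C_{n+1}(\FF_{q^r})}(\nu_P(\FF_{q^r})-1) \;=\; N_r(\widetilde{C_{n+1}}) - \vertiii{A_r^n} \;=\; \lambda_r c'd\cdot d^n + o(d^n).
\]
The inclusion $C_{n+1}(\FF_{q^r})\subset C_{n+1}(\overline{\FF_q})$ together with $\nu_P(\FF_{q^r}) \leq \nu_P(\overline{\FF_q})$ at each $\FF_{q^r}$-rational singular point transfer this bound to the geometric sum,
\[
\sum_{P\in C_{n+1}(\overline{\FF_q})}(\nu_P(\overline{\FF_q})-1) \;\geq\; \lambda_r c'd\cdot d^n + o(d^n),
\]
which yields the lower bound in case~(ii) with $c\geq \lambda_r c' d$.

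The main obstacle will be the matching upper bound of order $d^n$ needed to make the literal asymptotic $= c d^n + o(d^n)$ into a genuine equivalence rather than just a lower bound. A natural strategy is to use $\nu_P-1 \leq \delta_P$ so that the geometric sum is bounded by $\gamma_{n+1} - g_{n+1}$, and to control the arithmetic genus $\gamma_{n+1}$ via the sharp model and Proposition~\ref{genre_arithmetic_geometric}: since $\gamma_n^\sharp\sim c'd^n$, only the additional gap $\gamma_{n+1}-\gamma_{n+1}^\sharp$ coming from the passage $C^\sharp_{n+1}\to C_{n+1}$ would need an independent~$O(d^n)$ estimate. For the sole application that matters downstream, Theorem~\ref{betar}, only the lower bound direction is actually used, so the dichotomy above is already sufficient in practice.
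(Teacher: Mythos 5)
Your proposal follows the same route as the paper: fix~$r$ with~$\beta_r\neq 0$, apply equation~(\ref{somme}) at level~$r$, use Lemma~\ref{exacte} to get~$\sharp\widetilde{C}_{n+1}(\FF_{q^r})=c\,d^n+o(d^n)$ with~$c>0$, and then invoke Proposition~\ref{cle} (last clause) to force~$\vertiii{A_r^n}=o(d^n)$ when no finite $d$-regular strongly connected component exists; the only cosmetic difference is that you phrase the dichotomy through~$\rho(A_r)=d$ versus~$\rho(A_r)<d$, which is an equivalent reformulation of assertion~(\ref{rhoAeqd}) in Proposition~\ref{cle}. The one place you go further than the paper is your final caveat, and it is a legitimate one: the argument establishes~$\sum_{P\in C_{n+1}(\FF_{q^r})}(\nu_P(\FF_{q^r})-1)=c\,d^n+o(d^n)$, and passing to~$\overline{\FF}_q$ via the pointwise inequalities~$\nu_P(\FF_{q^r})\leq\nu_P(\overline{\FF}_q)$ and~$C_{n+1}(\FF_{q^r})\subset C_{n+1}(\overline{\FF}_q)$ only yields a lower bound of the form~$\geq c\,d^n+o(d^n)$. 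The paper's own proof silently treats this as establishing the exact asymptotic claimed in~(\ref{plein_de_pts_desingularisation}); a matching upper bound~$O(d^n)$ over~$\overline{\FF}_q$ would indeed require separate control of the arithmetic genus of~$C_n$ (not~$C_n^\sharp$), which is not supplied. Since the statement is only ever used through its negation (the new points being \emph{not} negligible compared to~$d^n$), the lower bound is what matters in practice, and your proposal correctly captures the substance of the result while flagging the overclaim in its literal formulation.
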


\begin{preuve}
Let $r \geq 1$ be such that $\beta_r \neq 0$. We use proposition~\ref{cle} with $S = \Cun(\FF_{q^r})$, hence $\Graphe_S=\Graphe_r$ is the $r$-th arithmetic
graph, whose adjacency matrix is $A_r$.
Thanks to~(\ref{somme}), we have
$$
\sharp \widetilde{C}_{n+1}(\FF_{q^r})
=
\vertiii{A_r^n}
+
\sum_{\text{$P \in C_{n+1}(\FF_{q^r})$}} (\nu_P(\FF_{q^r})-1),
$$
By lemma~\ref{exacte}, since~$\beta_r \not=0$, we know
that there exists a constant~$c > 0$ such
that~$\sharp \widetilde{C}_{n+1}(\FF_{q^r}) = c\times d^n + o(d^n)$.
Suppose that $\ref{exists-d-reg}$ does not hold. Then
by proposition~\ref{cle} (last statement), we have~$\vertiii{A_r^n} = o(d^n)$.
Therefore~\ref{plein_de_pts_desingularisation} must holds.
%
%
\end{preuve}

In the following result we show under some hypothesis that the
sequence~$(\beta_r)_{r\geq 1}$ of a recursive tower could have at most an unique
non-zero term.

\begin{theo} \label{betar}
Let~$(\Cun,\Cdeux)$ be a correspondence as in section~\ref{s_recursive_tower}. Suppose that the curves~$C_n$ of the associated tower are all irreducible
and that the geometric genus sequence~$(g_n)_{n\geq 1}$ goes to~$+\infty$.  Suppose also that the following hypothesis holds:

(H) the number of new geometric points of $\widetilde{C}_n$ coming from the desingularisation of $C_n$ is negligible compared to $d^n$ for large $n$, that is:
$$\sum_{\text{$P \in C_{n+1}({\overline \FF}_{q})$ }} (\nu_P({\overline \FF}_{q})-1) = o(d^n).$$
Then, there exists at most one integer $r \geq 1$ such that $\beta_r \neq 0$.
\end{theo}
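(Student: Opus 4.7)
The plan is to argue by contradiction: assume $\beta_r$ and $\beta_{r'}$ are both non-zero for some pair of distinct positive integers $r \neq r'$, and derive $\lambda_r = \lambda_{r'} = \lambda_{\gcd(r,r')}$, which will then be incompatible with the additive identity~(\ref{lambda_r_beta_r}). The two engines are Proposition~\ref{cle} (used in both directions (iii)~$\Rightarrow$~(ii) and (ii)~$\Rightarrow$~(iii)) and the uniqueness Theorem~\ref{uniquecomposympa}.

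\textbf{Step 1 (extracting regular components).} Fix any $r$ with $\beta_r \neq 0$. By Lemma~\ref{exacte}, $\sharp\widetilde{C}_{n+1}(\FF_{q^r}) \sim \lambda_r\gamma\,d^n$, where $\gamma := \lim_n g_n/d^{n-1} > 0$. Since the correction term in~(\ref{somme}) is absolutely dominated (term by term) by $\sum_{P \in C_{n+1}(\overline{\FF_q})}(\nu_P(\overline{\FF_q})-1) = o(d^n)$ by (H), we obtain $\vertiii{A_r^n} = \lambda_r\gamma\,d^n + o(d^n)$ with $\lambda_r\gamma > 0$. Hence assertion~(iii) of Proposition~\ref{cle} holds at $S = \Cun(\FF_{q^r})$, producing a unique subset $\Sigma_r \subset \Cun(\FF_{q^r})$ whose induced subgraph is $d$-regular and strongly connected, of cardinality $\lambda_r\gamma$. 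Running the same argument at $r'$ yields $\Sigma_{r'} \subset \Cun(\FF_{q^{r'}})$ of cardinality $\lambda_{r'}\gamma$.

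\textbf{Step 2 (merging via uniqueness).} In $\Graphe_\infty$, every vertex has in- and out-degree at most $d$; thus any internally $d$-regular subgraph must contain all of its in- and out-neighbours in $\Graphe_\infty$, i.e.~is both forward and backward complete. Consequently $\Sigma_r$ and $\Sigma_{r'}$ are finite $d$-regular strongly connected components of $\Graphe_\infty$, and Theorem~\ref{uniquecomposympa} forces $\Sigma_r = \Sigma_{r'} =: \Sigma$. In particular $\Sigma \subset \Cun(\FF_{q^r}) \cap \Cun(\FF_{q^{r'}}) = \Cun(\FF_{q^{\gcd(r,r')}})$, and equality of cardinalities gives $\lambda_r = \lambda_{r'}$. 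Running Proposition~\ref{cle} in the direction (ii)~$\Rightarrow$~(iii) at $S = \Cun(\FF_{q^{\gcd(r,r')}})$, which still contains the witness $\Sigma$, and combining with~(\ref{somme}) and (H) as above, we also obtain $\lambda_{\gcd(r,r')} = \sharp\Sigma/\gamma = \lambda_r$.

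\textbf{Step 3 (closing the contradiction).} Subtracting the instance of~(\ref{lambda_r_beta_r}) at $r$ from that at $\gcd(r,r')$ yields
$$
0 = \lambda_r - \lambda_{\gcd(r,r')} = \sum_{\substack{e \mid r \\ e \,\nmid\, \gcd(r,r')}} e\,\beta_e.
$$
All terms are non-negative, so each $\beta_e$ in the sum vanishes. If $r \nmid r'$ then $\gcd(r,r') < r$, so $r$ itself indexes one of these vanishing terms and $\beta_r = 0$, contradicting our choice of $r$. The symmetric argument rules out $r' \nmid r$, leaving $r = r'$. The main obstacle is Step~1, where one must cleanly separate the purely graph-theoretic count $\vertiii{A_r^n}$ from the singularity contribution---this is exactly where hypothesis (H) intervenes---in order to apply the (iii)~$\Rightarrow$~(ii) direction of Proposition~\ref{cle}. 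Step~2 is then a structural application of Theorem~\ref{uniquecomposympa}, and Step~3 is an elementary divisibility argument.
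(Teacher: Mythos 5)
Your proof is correct, and it takes a genuinely different route from the paper's. The paper argues directly: it splits into cases according to whether~$\Graphe_\infty$ contains a finite $d$-regular strongly connected component, and in the affirmative case identifies the unique candidate~$r$ as the \emph{smallest} integer with~$\Sigma\subset\Cun(\FF_{q^r})$, then checks one by one that~$\beta_s$ vanishes when~$s$ strictly divides~$r$, when~$s$ is a strict multiple of~$r$, and when~$s$ and~$r$ are incomparable for divisibility, always via the dichotomy~$\lambda_s=\sharp\Sigma\,\ell$ (if~$r\mid s$) or~$\lambda_s=0$ (otherwise). You instead argue by contradiction: assuming~$\beta_r,\beta_{r'}\neq 0$ with~$r\neq r'$, you run Proposition~\ref{cle} in the direction (iii)\,$\Rightarrow$\,(ii) to extract~$\Sigma_r$ and~$\Sigma_{r'}$, observe (correctly, since out- and in-degrees in~$\Graphe_\infty$ are~$\leq d$, so an internally $d$-regular subgraph is forward and backward complete, hence maximal strongly connected) that these are actual finite $d$-regular strongly connected \emph{components}, force~$\Sigma_r=\Sigma_{r'}$ via Theorem~\ref{uniquecomposympa}, deduce~$\lambda_r=\lambda_{r'}=\lambda_{\gcd(r,r')}$ by a second pass through Proposition~\ref{cle}, and close with a short divisibility computation on~(\ref{lambda_r_beta_r}). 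The two arguments invoke exactly the same machinery (Lemma~\ref{exacte}, equation~(\ref{somme}), hypothesis~(H), Proposition~\ref{cle}, Theorem~\ref{uniquecomposympa}); your contradiction route avoids the case analysis on~$\ell=0$ versus~$\ell>0$ and the explicit minimality argument, which makes it arguably tighter, but the paper's direct version yields as a by-product the explicit value~$\lambda_r=r\beta_r=\sharp\Sigma\,\ell$ of the single surviving parameter, which it reuses in Corollary~\ref{GStower}; your proof establishes uniqueness without producing this formula.
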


\begin{rks}
\item The irreducibility and genus-behaviour hypotheses are quite cheap. 
\item Up to our knowledge, the only examples in the literature who do not satisfy the desingularization assumption (H) are the tower defined by the recursive equation $(x+1)^3=y^3+1$ over $\FF_4$ (\cite{GS-FFA} or  \cite[example 2.4]{Beelen}), and a tower of Shimura curves computed by Elkies (see \cite{ElkiesET}).
\item The conclusion of this theorem is false for good towers constructed from Hilbert class field towers using Grunwald-Wang theorem as communicated to us by  Philippe Lebaque. It is also false for pull back of recursive towers, see the last section \ref{several} at the end of this paper.
\end{rks}

\begin{preuve}
For~$s\geq 1$, we consider the 
$s$-th arithmetic graph~$\Graphe_s$ and
we denote by~$A_s$ its adjacency matrix. We have by~(\ref{somme}):
\begin{equation}
\sharp \widetilde{C}_{n+1}(\FF_{q^s})
=
\vertiii{A_s^n}
+
\sum_{\text{$P \in C_{n+1}(\FF_{q^s})$}} (\nu_P(\FF_{q^s})-1),
%
\end{equation}

Hypothesis~(H) implies that the sum on the right hand side is negligible compared to~$d^n$. As for the norm~$\vertiii{A_s^n}$, it depends whether or not
conditions of proposition~\ref{cle} are satisfied for
the finite set~$S=\Cun(\FF_{q^s})$.

Suppose that the graph~$\Graphe_\infty$ does not contain any finite $d$-regular
strongly connected component. Then, by proposition~\ref{cle},
for any~$s \geq 1$, one also has~$\vertiii{A_s^n} = o(d^n)$.
We deduce
that~$\sharp \widetilde{C}_n(\FF_{q^s}) = o(d^n)$ and it follows by
lemma \ref{exacte} that all $\beta_s$'s vanish.

Suppose now
 that the graph~$\Graphe_\infty$ admits at least one finite $d$-regular
strongly connected component. By theorem~\ref{uniquecomposympa},
this component is unique and we denote it by~$\Graphe_\Sigma$.
By proposition~\ref{cle},
either~$\Sigma\subset \Cun(\FF_{q^s})$
in which case~$\vertiii{A_s^n} = \sharp\Sigma \times d^n + o(d^n)$,
or~$\Sigma\not\subset \Cun(\FF_{q^s})$ in which case~$\vertiii{A_s^n} = o(d^n)$.
Due to hypothesis~(H), we deduce that, for $s \geq 1$:
\begin{equation*}
\Sigma\subset \Cun(\FF_{q^s})
\quad\Leftrightarrow\quad \sharp \widetilde{C}_n(\FF_{q^s}) = \sharp\Sigma \times d^n + o(d^n),
\end{equation*}
and
\begin{equation} \label{negligeable}
\Sigma\not\subset \Cun(\FF_{q^s})
\quad\Leftrightarrow\quad \sharp \widetilde{C}_n(\FF_{q^s}) = o(d^n).
\end{equation}

We denote by~$r \geq 1$ the smallest integer such
that $\Sigma \subset \Cun(\FF_{q^{r}})$.
Then we have~$\widetilde{C}_n(\FF_{q^r}) = \sharp\Sigma \times d^n + o(d^n)$
and
\begin{equation}\label{produit}
\frac{d^n}{g_n}
=
\frac{d^n}{\sharp\widetilde{C}_n(\FF_{q^r})}
\times
\frac{\sharp\widetilde{C}_n(\FF_{q^r})}{g_n}
=
\frac{1}{\sharp\Sigma+o(1)}
\times
\frac{\sharp\widetilde{C}_n(\FF_{q^r})}{g_n}.
\end{equation}
Since both right hand side sequences admit a limit, so do the left hand side one.
Let~$\ell = \lim_{n\to+\infty}\frac{d^n}{g_n}$. If this limit is
zero then by lemma \ref{exacte} every~$\lambda_i({\mathcal T})$, and thus
every~$\beta_i({\mathcal T})$, vanishes and the theorem is proved.
We suppose now that~$\ell > 0$. From (\ref{negligeable}) and~(\ref{produit}), one has for any $s \geq 1$:
\begin{equation} \label{lambdas}
\lambda_s({\mathcal T})
=
\lim_{n\to+\infty} \frac{\sharp\widetilde{C}_n(\FF_{q^s})}{g_n}
=
\lim_{n\to+\infty} \left(\frac{\sharp\widetilde{C}_n(\FF_{q^s})}{d^n}
\times \frac{d^n}{g_n}\right)
=
\begin{cases}
\sharp\Sigma \times \ell & \text{if~$\Sigma \subset \Cun(\FF_{q^s})$} \\
0 & \text{if~$\Sigma \not\subset \Cun(\FF_{q^s})$}.
\end{cases}
\end{equation}

We now conclude in three steps. First,
by minimality of~$r$, we have~$\Sigma \subset \Cun(\FF_{q^s})$
if and only if~$r \mid s$. In particular,
$$
\lambda_r({\mathcal T}) = \sharp\Sigma \times\ell
\qquad \text{and} \qquad
\forall s \mid r, \, s\not= r, \quad \lambda_s({\mathcal T}) = 0.
$$
The last vanishing implies that~$\beta_s({\mathcal T}) = 0$ for~$s$
strictly dividing~$r$, thus
\begin{equation} \label{lambdar}
\lambda_r({\mathcal T}) = r\beta_r({\mathcal T}) = \sharp\Sigma \times \ell
\end{equation}
hence $\beta_r({\mathcal T}) \not=0$.
Second, for~$k\geq 2$, $\Sigma\subset{\FF_{q^{kr}}}$, then thanks to~(\ref{lambdas}) and (\ref{lambdar}):
$$
\sum_{\genfrac{}{}{0pt}{}{d\mid kr}{d\not= r}} d\beta_d({\mathcal T}) = \lambda_{kr}({\mathcal T}) 
- r\beta_r({\mathcal T}) =  \sharp\Sigma \times \ell- \sharp\Sigma \times \ell = 0.
$$
Therefore~$\beta_s({\mathcal T}) = 0$ for every~$s$ strictly divisible by~$r$.
Last, for~$s$ which neither divides~$r$ nor is divisible by~$r$,
then~$\Sigma\not\subset\Cun(\FF_{q^s})$, hence by (\ref{negligeable}) and lemma~\ref{exacte}, we have~$\lambda_{s}({\mathcal T}) =0$
and, a fortiori, $\beta_{s}({\mathcal T}) = 0$. This concludes the proof.
\end{preuve}

Theorem~\ref{betar} is a good tool to compute the defect of a recursive
tower. As in section~\ref{s_computation_genus},
let us consider the BGS tower as an example.

\begin{cor} \label{GStower}
Consider Bezerra-Garcia-Stichtenoth's tower defined in section~\ref{s_computation_genus} over $\FF_q$. Then:
\begin{enumerate}
\item \label{beta3} one has
$$
\beta_3 = \frac{2(q^2-1)}{3(q+2)},
\qquad
\beta_r=0, \,  r \neq 3,
\qquad
\lambda_3 = \frac{2(q^2-1)}{(q+2)} \; ;
$$

\item\label{defect} the defect $\delta_{BGS}$ of this tower, as defined in \cite{TV02}, is given by
$$
\delta_{BGS} = 1 - \frac{2(q^2-1)}{(q+2)\sqrt{q^3-1}}  \; ;
$$

\item \label{zeta} the zeta function of this tower, as defined in \cite{TV02}, is given by
$$
Z_{BGS}(T) = \frac{1}{(1-T)^{\beta_3}}.
$$
\end{enumerate}
\end{cor}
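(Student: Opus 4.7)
The overall plan is to apply Theorem~\ref{betar} to the BGS tower, identify the unique non-vanishing $\beta_r$ as $\beta_3$ by locating a finite $d$-regular strongly connected component of $\Graphe_\infty$, pin down its exact value using the known sharp genus asymptotic from the literature, and then derive (ii) and (iii) by direct substitution.

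First I verify the three hypotheses of Theorem~\ref{betar}. Irreducibility of each $C_n$ follows from the discussion in Section~\ref{s_computation_genus}: the unique loop outgoing from the vertex $0$ in $\Gsing$, which is \'etale by $\pi_2$, forces total ramification above $0\in C_1$. The divergence $g_n\to+\infty$ is part of the explicit genus computation in \cite{Ap3-3,Ap3-1,Ap3-2}. Hypothesis (H) is exactly Proposition~\ref{prop_genre_Ap3}, since $O(\sqrt{q^n}) = o(q^n) = o(d^n)$. Theorem~\ref{betar} thus yields that at most one $\beta_r$ is non-zero.

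To identify this exponent, I invoke the left-hand complete set $\Sigma$ of Figure~\ref{figure_Ap3}, which has $q(q+1)$ vertices, all lying in $\Cun(\FF_{q^3})$, and whose induced subgraph $\Graphe_\Sigma$ is $d$-regular and strongly connected (this is the totally-split component of \cite[Proposition~3.1]{Ap3-3}). By Theorem~\ref{uniquecomposympa}, $\Graphe_\Sigma$ is the unique finite $d$-regular strongly connected component of $\Graphe_\infty$. Since $\sharp\Sigma = q(q+1) > q+1 = \sharp\Cun(\FF_q)$, the set $\Sigma$ is not contained in $\Cun(\FF_q)$; since moreover $\Cun(\FF_{q^2}) \cap \Cun(\FF_{q^3}) = \Cun(\FF_q)$, it is not contained in $\Cun(\FF_{q^2})$ either. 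Hence the minimal $r$ with $\Sigma \subset \Cun(\FF_{q^r})$ is $r=3$. Equation~(\ref{lambdar}) in the proof of Theorem~\ref{betar} gives $\lambda_3 = \sharp\Sigma \cdot \ell = q(q+1)\,\ell$ with $\ell = \lim d^n/g_n$; plugging in the sharp genus asymptotic $g_n \sim \frac{q(q+2)}{2(q-1)}\,q^n$ proved in \cite{Ap3-3,Ap3-1,Ap3-2} yields $\ell = \frac{2(q-1)}{q(q+2)}$, hence $\lambda_3 = \frac{2(q^2-1)}{q+2}$. Since all other $\beta_s$ vanish, relation~(\ref{lambda_r_beta_r}) forces $\beta_3 = \lambda_3/3$, settling~(i).

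Parts (ii) and (iii) are then immediate consequences of (i) by substitution into the defining formulas~(\ref{deficiency}) and that of the Tsfasmann--Vl{\u{a}}du{\c{t}} zeta function \cite{TV02}: the sum, respectively product, over $r$ in each definition reduces to a single non-vanishing $r=3$ term. The only substantive external input in the entire argument is the sharp genus asymptotic, whose derivation in \cite{Ap3-3,Ap3-1,Ap3-2} rests on the delicate case-by-case desingularization analysis of the type-$T_0$ points described in Section~\ref{s_computation_genus}; this is the main technical obstacle, and it is not re-addressed here.
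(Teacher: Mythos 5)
Your proposal is correct and follows essentially the same route as the paper's proof: verifying hypothesis (H) via Proposition~\ref{prop_genre_Ap3}, identifying the unique finite $d$-regular strongly connected component with the totally split set of size $q(q+1)$ from \cite[Prop.~3.1]{Ap3-3} via Lemma~\ref{forward_complete_complete} and Theorem~\ref{uniquecomposympa}, invoking Theorem~\ref{betar} for uniqueness of the non-zero $\beta_r$, and combining Proposition~\ref{cle} with the known genus asymptotic to extract $\lambda_3$ and hence $\beta_3$. The only cosmetic differences are that you re-derive irreducibility from the graph argument of Section~\ref{s_computation_genus} where the paper cites \cite{Ap3-2} directly, and you spell out the genus equivalent explicitly rather than pointing to the formula in \cite{Ap3-3}.
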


\begin{rk}
Assertion $\ref{defect}$ states that for $q$ large, we have $\delta_{BGS} =  1-\frac{2}{\sqrt q} +o\left( \frac{1}{\sqrt q}\right)$, so that the tower is good, but far from being optimal in the sense of \cite{TV02} (see section \ref{s_invariants}).
\end{rk}

\begin{preuve}
  This tower is known to be irreducible from \cite{Ap3-2}. What proposition 3.1 in \cite{Ap3-3} states in our context is that some explicit set $\Omega \subset {\mathbb P}^1(\FF_{q^3})$ of cardinal $q(q+1)$ is forward complete and outside the singular graph $\Graphe_{sing}$. 
 Since it is finite, it is complete by lemma \ref{forward_complete_complete}. The graph~$\Graphe_\Omega$ is thus $d$-regular; it must also be strongly connected since otherwise there would be more than two strongly connected components in~$\Graphe_\infty$, contradicting theorem~\ref{uniquecomposympa}. Since $\Omega \subset  {\mathbb P}^1(\FF_{q^3})$ but $\Omega \nsubseteq  {\mathbb P}^1(\FF_{q})$, we have $\beta_3 \neq 0$.
 By proposition~\ref{prop_genre_Ap3}, assumption (H) in theorem~\ref{betar} holds true since $d=q$, hence there exists by theorem~\ref{betar} at most one $r$ such that $\beta_r$ is non-zero, i.e.  $\beta_r=0$ for $r \neq 3$.

Now, the number of rational points of $\widetilde{C}_n$ over $\FF_{q^3}$ is given by equation~(\ref{somme}) for $r=3$. Here, the sum is $o(q^n)$ since (H) holds true as already stated. As for the norm part, condition~$\ref{Sigmadreg}$ of proposition~\ref{cle} 
for $S={\mathbb P}^1(\FF_{q^3})$ holds true for $\Sigma=\Omega$.  Thus condition~$\ref{pleinpoints}$ holds also true for $c = \sharp \Omega$, so that~(\ref{somme}) for $s=3$ reduces to
$$\sharp \widetilde{C}_n(\FF_{q^3}) = \sharp \Omega \times q^n+o(q^n).$$
Finally, the genus of $\widetilde{C}_n$ is given in \cite{Ap3-3}, so that~$\ref{beta3}$ is proved.
Items~$\ref{defect}$ and~$\ref{zeta}$ follow immediately by definitions of $\delta_{BGS}$ and $Z_{BGS}(T)$.
\end{preuve}

%
%
%
%
%
\subsection{Several non-zero~$\beta_r$'s}\label{several}

Following~\cite{HessStichTutdere}, let us call {\em positive parameter} of a tower~${\mathcal T}$ an integer~$r \in \NN^*$ such
that~$\beta_r({\mathcal T}) \not=0$. Theorem~\ref{betar} states that,
under the desingularization hypothesis, the set of positive parameters
of a {\em recursive} tower contains at most one element.

Recently, this set has been studied by some authors
(see e.g.~\cite{Lebacque_2,Lebacque_1,HessStichTutdere,BalletRolland}).
They prove that there exist towers of function fields with a set of
positive parameters arbitrarily large. For example,
Hess, Stichtenoth and Tutdere \cite{HessStichTutdere} pullback a known good
{\em recursive tower} by a well chosen curve. More precisely,
let~$(\Cun,\Cdeux)$ be a correspondence as in section~\ref{s_recursive_tower},
let~${\mathcal T} = (C_n)_{n\geq 1}$ the associated tower.
Let $\pi : Y \rightarrow \Cun$ be
a finite surjective morphism, with $Y$ absolutely irreducible. We
define the {\em pullback}~$\pi^*{\mathcal T} = (D_n)_{n\geq 1}$
of the tower~${\mathcal T}$ by~$\pi$ by~$D_1 = Y$
and~$D_{n+1}=D_n\times_{C_n}C_{n+1}$. Let us point out that the
pullback tower may not be recursive in our sense. One way to convince ourselves
of this is precisely to remark
that this tower may have a set of positive parameters with more than one
element, while it is easily seen that if ${\mathcal T}$ satisfies the desingularization assumption, then so does $\pi^*({\mathcal T})$. We prove:

\begin{cor} \label{basechange}
Let~$(\Cun,\Cdeux)$ be a correspondence satisfying the hypothesis of theorem~\ref{betar}. Let $\pi : Y \rightarrow \Cun$ be a finite surjective morphism of degree $d$, with $Y$ absolutely irreducible.
Then the pullback tower $\pi^*({\mathcal T}(\Cun, \Cdeux))$
has a set of positive parameters with at most $d$ elements.
\end{cor}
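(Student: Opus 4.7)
The strategy is to exploit Theorem~\ref{betar} applied to $\mathcal{T}$ together with the degree-$d$ base-change morphism $\widetilde{D}_n \to \widetilde{C}_n$. From the pointwise inequality $\sharp \widetilde{D}_n(\FF_{q^s}) \leq d \cdot \sharp \widetilde{C}_n(\FF_{q^s})$ (valid because the induced map has total degree $d$, summed over the components of $\widetilde{D}_n$) and the Riemann-Hurwitz bound $g(\widetilde{D}_n)-1 \geq d(g(\widetilde{C}_n)-1)$, one deduces $\lambda_s(\pi^*\mathcal{T}) \leq \lambda_s(\mathcal{T})$ for every $s \geq 1$. By the proof of Theorem~\ref{betar}, $\lambda_s(\mathcal{T})$ vanishes whenever $s$ is not a multiple of the unique positive parameter~$r$ of~$\mathcal{T}$ (and if no such $r$ exists, then $\pi^*\mathcal{T}$ has no positive parameter either, so there is nothing to prove). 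Hence the positive parameters of $\pi^*\mathcal{T}$ all lie in $\{r,2r,3r,\ldots\}$.

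To refine the bound from ``multiples of~$r$'' to at most $d$ distinct values, I would exploit the irreducible-component structure of~$D_n$. Since $\FF_q(D_n) = \FF_q(Y) \otimes_{\FF_q(X)} \FF_q(C_n)$ is a $d$-dimensional $\FF_q(C_n)$-algebra, the curve $D_n$ decomposes over~$\FF_q$ into at most $d$ irreducible components; as $n$ grows, the intersection $\FF_q(Y) \cap \FF_q(C_n)$ only enlarges, so the number $k_n$ of components is non-decreasing and stabilizes at some $k_\infty \leq d$. For $n$ sufficiently large, label the components $D_n^{(1)}, \ldots, D_n^{(k_\infty)}$ compatibly with the morphisms in the tower, which produces $k_\infty \leq d$ sub-towers of irreducible curves. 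The set of positive parameters of $\pi^*\mathcal{T}$ is the union, over these sub-towers, of their individual positive-parameter sets.

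The main obstacle is showing that each sub-tower $(D_n^{(i)})_n$ contributes at most one positive parameter, so that the total is at most $k_\infty \leq d$. Each sub-tower arises as a pullback of~$\mathcal{T}$ along an intermediate finite morphism of degree $d^{(i)}$, with $\sum_i d^{(i)} = d$, but it is generally not itself a recursive tower in the sense of Section~\ref{s_recursive_tower}. The idea is to adapt the graph-theoretic machinery of Section~\ref{s_asymptotic} to the mixed infinite graph on the vertex set $D_n^{(i)}(\overline{\FF_q}) \sqcup X(\overline{\FF_q})$ parametrising the paths of the sub-tower: the unique $d$-regular strongly connected component~$\Sigma \subset X(\FF_{q^r})$ of $\Graphe_\infty(X,\Gamma)$ lifts canonically and Galois-equivariantly to this mixed graph, and the uniqueness of~$\Sigma$ in Theorem~\ref{uniquecomposympa} combined with Proposition~\ref{cle} forces a unique regular strongly connected component per sub-tower. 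An adaptation of the M\"obius inversion used at the end of the proof of Theorem~\ref{betar} then delivers at most one positive parameter per sub-tower. The desingularization assumption~(H) inherited from~$\mathcal{T}$ carries over through the finite morphism, so singular contributions remain negligible. Taking the union over sub-towers gives the claimed bound of at most $d$ positive parameters for~$\pi^*\mathcal{T}$.
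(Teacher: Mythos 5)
Your first paragraph is sound and tracks the opening of the paper's own proof: the bound $\sharp\widetilde{D}_n(\FF_{q^s}) \leq d\,\sharp\widetilde{C}_n(\FF_{q^s})$ together with Riemann--Hurwitz and theorem~\ref{betar} does force the positive parameters of $\pi^*\mathcal{T}$ to be multiples of the unique $r_0$ with $\beta_{r_0}(\mathcal{T})\neq 0$ (and rules them out entirely if no such $r_0$ exists). But the refinement from ``multiples of $r_0$'' to ``at most $d$ values'' is where your argument has a genuine gap, and it is precisely the part of the corollary that requires an idea. You decompose $D_n$ into $k_\infty\leq d$ irreducible sub-towers and then assert that each contributes at most one positive parameter; you acknowledge that the sub-towers are not recursive in the sense of the paper, so theorem~\ref{betar} does not apply to them, and the proposed fix --- lifting the unique $d$-regular strongly connected component to a ``mixed infinite graph'' on $D_n^{(i)}(\overline{\FF_q})\sqcup X(\overline{\FF_q})$ and re-running propositions~\ref{cle}, \ref{uniquecomposympa} --- is never actually set up or proved. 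As written, ``adapt the graph-theoretic machinery'' is a placeholder for the entire content of the step, not a proof of it, so the bound of $d$ is not established.

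The paper avoids any decomposition into sub-towers and finishes the argument with a single sharper point-counting inequality, working directly with $B_r$ (points of exact degree $r$) rather than $N_r$. Because the covering $\widetilde{D}_n\to\widetilde{C}_n$ has degree $d$, a degree-$r$ closed point of $\widetilde{D}_n$ lies over a degree-$s$ point of $\widetilde{C}_n$ where $s\mid r$ and the residue extension degree $r/s$ is at most $d$; moreover at most $ds/r$ such preimages can sit over a given degree-$s$ point. This yields
$$
B_r(\widetilde{D}_n)\ \leq\ \sum_{\substack{s\mid r\\ sd\geq r}}\frac{ds}{r}\,B_s(\widetilde{C}_n).
$$
Since $\beta_s(\mathcal{T})=0$ for $s\neq r_0$, dividing by $g(\widetilde{D}_n)$ and letting $n\to\infty$ kills the right-hand side unless $s=r_0$ appears in the sum, i.e.\ unless $r_0\mid r$ and $r_0 d\geq r$. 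Hence the positive parameters of $\pi^*\mathcal{T}$ lie in $\{r_0, 2r_0,\ldots, dr_0\}$, a set of $d$ elements. The moral is that the constraint $r/s\leq d$ coming from the degree of $\pi$ is exactly what caps the count at $d$; your approach tries to recover this via the number of sub-towers, which is a priori a different quantity and in any case requires a new uniqueness theorem that you have not supplied. I'd encourage you to replace your second and third paragraphs with the $B_r$-inequality above, which closes the argument in a few lines and needs no decomposition.
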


\begin{preuve} Thanks to theorem~\ref{betar}, the recursive tower ${\mathcal T}(\Cun, \Cdeux)$ possesses either zero, or only one non-vanishing $\beta_r$.
Note first that for any $r \geq 1$, the trivial bound $N_r(\widetilde{D}_n) \leq d \times \left(\sum_{s \mid r} N_s(\widetilde{C}_n)\right)$ implies
\begin{equation}\label{Couillone}
0 \leq \frac{N_r(\widetilde{D}_n)}{g(\widetilde{D}_n)} \leq d \times \sum_{s \mid r} \frac{N_s(\widetilde{C}_n)}{g(\widetilde{C}_n)} \times \frac{g(\widetilde{C}_n)}{g(\widetilde{D}_n)}  \leq d \times \sum_{s \mid r} \frac{N_s(\widetilde{C}_n)}{g(\widetilde{C}_n)}.
\end{equation}

Suppose first that ${\mathcal T}(\Cun, \Cdeux)$ possesses no non-vanishing $\beta_s$: for any $s$, the sequence $\frac{N_s(\widetilde{C}_n)}{g(\widetilde{C}_n)}$ goes to zero as $n$ goes to infinity. Let $r \geq 1$. All terms of the right hand side in (\ref{Couillone}) goes to zero as $n$ goes to infinity, so that the pulled back tower satisfy
$\lambda_r(\pi^*({\mathcal T}(\Cun, \Cdeux)))=0$.

\medskip

Suppose now that $\beta_{r_0}({\mathcal T}(\Cun, \Cdeux))\neq 0$ (hence all other $\beta_r$ vanish). Let $r \geq 1$. 
In case $r$ is not divisible by $r_0$, one proves in the same way using formula (\ref{Couillone}) that $\lambda_r(\pi^*({\mathcal T}(\Cun, \Cdeux)))=0$. In case $r=k\times r_0$, we need the sharper bound 
$$
B_r(\widetilde{D}_n) \leq \sum_{s\mid r, \, sd \geq r} \frac{ds}{r} \times B_s(\widetilde{C}_n),
$$
which implies
\begin{equation}\label{PasCouillone}
0
\leq
\frac{B_r(\widetilde{D}_n)}{g(\widetilde{D}_n)}
\leq
\sum_{s\mid r, \, sd \geq r} \frac{ds}{r} \times \frac{B_s(\widetilde{C}_n)}{g(\widetilde{C}_n)} \times \frac{g(\widetilde{C}_n)}{g(\widetilde{D}_n)}
\leq
\sum_{s\mid r, \, sd \geq r}  \frac{ds}{r} \times \frac{B_s(\widetilde{C}_n)}{g(\widetilde{C}_n)}.
\end{equation}
In our case $r = k\times r_0$, suppose that $k >d$. Then any index $s$ in the right-hand sum of (\ref{PasCouillone}) will be strictly greater that $r_0$, hence the right hand side goes to zero as $n$ goes to infinity, so that the left hand side also. 
\end{preuve}

\bibliographystyle{amsalpha}
\providecommand{\bysame}{\leavevmode\hbox to3em{\hrulefill}\thinspace}
\providecommand{\MR}{\relax\ifhmode\unskip\space\fi MR }
\providecommand{\MRhref}[2]{%
  \href{http://www.ams.org/mathscinet-getitem?mr=#1}{#2}
}
\providecommand{\href}[2]{#2}

\bigbreak

\noindent
\begin{minipage}[t]{0.6\textwidth}
Hallouin Emmanuel ({\tt hallouin@univ-tlse2.fr})

Perret Marc ({\tt perret@univ-tlse2.fr})
\end{minipage}
\hfill
\begin{minipage}[t]{0.3\textwidth}
Universit\'e Toulouse~2

5, all\'ees Antonio Machado

31058 Toulouse cedex

France
\end{minipage}
\end{document}